\title{$3$-Colouring $P_t$-free graphs without short odd cycles}
\author{\smallskip Alberto Rojas
and Maya Stein\footnote{Supported by FONDECYT/ANID Regular Grant 1183080, by MathAmSud 20MATH-01, by FAPESP-CONICYT Investigaci\'on Conjunta grant C\'odigo 2019/13364-7, and by CONICYT + PIA/Apoyo Basal, C\'odigo AFB170001.}\\   Departamento de Ingenier\'ia Matem\'atica,\\
Universidad de Chile. }
\date{}
\begin{document}

\newtheorem{theorem}{Theorem}[section]
\newtheorem{lemma}[theorem]{Lemma}
\newtheorem{proposition}[theorem]{Proposition}
\newtheorem{observation}[theorem]{Observation}
\newtheorem{corollary}[theorem]{Corollary}
\newtheorem{claim}[theorem]{Claim}
\newtheorem{conjecture}[theorem]{Conjecture}
\newtheorem{defn}[theorem]{Definition}

\newcommand{\Codd}{\mathcal C^{odd}_{\leq t-4}}
\newcommand{\Coddt}{\mathcal C^{odd}_{\leq \ell}}
\maketitle

\begin{abstract}
For any odd $t\ge 9$, we present a polynomial-time algorithm that solves the $3$-colouring problem, and finds a $3$-colouring if one exists, in $P_{t}$-free graphs of odd girth at least $t-2$. In particular, our algorithm works for
 $(P_9, C_3, C_5)$-free graphs, thus making progress towards determining the complexity of $3$-colouring in $P_t$-free graphs, which is open for $t\ge 8$. 

\end{abstract}

\section{Introduction}

A \emph{$k$-colouring} of a graph $G$ is an assigment of $k$ distinct colours to the vertices of $G$ such that all vertices are coloured and adjacent vertices have distinct colours.
The \emph{$k$-colouring problem} for
a graph $G$ and a natural number $k$ consists in deciding
whether $G$ is $k$-colourable or not. For $k=1$, the problem is equivalent to deciding whether $G$ has an edge, and for $k=2$, it is equivalent to deciding whether $G$ is bipartite, which can be done in linear time using a BFS algorithm. For $k\ge 3$, Karp~\cite{Ka-NPC} showed that the problem is NP-complete.

There are some graph classes for which the $k$-colouring problem is polynomially time solvable, such as perfect graphs~\cite{G-L-S-perf}.
On the other hand, the problem remains NP-complete, 
even when  restricted to the class of all triangle-free graphs with bounded maximum degree~\cite{M-P}. 

The class of $H$-free graphs, defined as  the class of all graphs not containing $H$ as an induced subgraph, is a very natural restriction and has appeared in many complexity results.
An extension of the result from~\cite{M-P} is that the $k$-colouring problem is NP-complete for $H$-free graphs, for any graph $H$
containing a cycle~\cite{K-L-col-g, K-K-T-W-col-g}.
Moreover, if $H$ is a forest
with a vertex of degree at least $3$, then $k$-colouring is
NP-complete for $H$-free graphs and $k \geq
3$~\cite{Hol-col,L-G-color}. Combining these results, we obtain that for $k\ge 3$, the $k$-colouring problem is NP-complete in $H$-free graphs, for all graphs $H$ that are {\it not} a disjoint union of 
paths. So naturally, there has been much activity in determining for which $k$ and $t$ the $k$-colouring problem is polynomially time solvable or NP-complete in $P_t$-free graphs, where $P_t$ is the path on $t$ vertices.  (There are also several recent results for graphs not having a disjoint union of paths as an induced  subgraph, see~\cite{mim-width, sP3, updating, P6-rP3, cout,  Paulusma-ISAAC18}.)

Huang~\cite{Huang-col-Pt-free} proved that $4$-colouring
is NP-complete for $P_7$-free graphs, and that $5$-colouring is
NP-complete for $P_6$-free graphs (which implies that $k$-colouring is
NP-complete for $P_6$-free graphs for $k\ge 5$). On the other hand,
Ho{\`a}ng, Kami\'{n}ski, Lozin, Sawada, and
Shu~\cite{H-K-L-S-S-col-P5} showed that $k$-colouring can be
solved in polynomial time on $P_5$-free graphs for any fixed~$k$, and recently, Chudnovsky, Spirkl, and Zhong~\cite{col-P6-free-I, col-P6-free-II} showed that the same is true for $P_6$-free graphs and $k = 4$. This means we have a complete classification of the complexity of
$k$-colouring $P_t$-free graphs for any fixed $k \geq 4$. 

For
$k=3$, less is known.  Randerath
and Schiermeyer~\cite{RS}  and
Bonomo, Chudnovsky, Maceli, Schaudt, Stein, and Zhong~\cite{B-C-M-S-S-Z-colP7}
gave  polynomial time
algorithms for $3$-colouring $P_6$- and $P_7$-free graphs, respectively.
However, no such algorithm is known for $P_8$-free graphs (although some partial results are known for this class of graphs~\cite{P8}). Interestingly, it is also unknown whether or not there exists $t\in \mathbb N$ such
that $3$-colouring is NP-complete on $P_t$-free graphs. 

Some of the positive results from above are easier to prove if in addition to the path $P_t$, also the triangle, or other cycles, are excluded, and in fact, sometimes this was the first step for proving the full result.
Let  $\mathcal C$ be any fixed family of graphs (here, it will always be a set of cycles). Call a graph $\mathcal C$-free if it does not contain any member of the family $\mathcal C$, and call $G$ $(P_t, \mathcal C)$-free if in addition, $G$ is $P_t$-free.
We refer to~\cite{G-J-P-S-col} and~\cite{Hell-survey-coloring} for an overview on the
complexity of colouring $(P_t, \mathcal C)$-free graphs. 

 In particular, in the case $k=3$, Golovach, Paulusma and Song~\cite{G-P-S-path-cycle} showed  that for any  $t\ge 1$, the $3$-colouring problem is  polynomial time solvable for $(P_t, C_4)$-free graphs. The same authors show that   for every $s\ge 3$ there is a $t=t(s)$ such that every $(P_t, \mathcal C_{\le s})$-free graph is $3$-colourable, where $\mathcal C_{\le s}$ is the family of all cycles up to length $s$. Recall that by the results of~\cite{K-L-col-g, K-K-T-W-col-g}, it is also known that for all  $s\ge 3$, $3$-colouring is NP-complete for $\mathcal C_{\le s}$-free graphs.
 
We focus on graphs having no short {\it odd}  cycles. The {\it odd girth} of a graph $G$ is the length of its shortest odd cycle (it is infinite if $G$ is bipartite).
Adhering to the terminology from above, we call a graph $\Coddt$-free, for $\ell\ge 3$ odd, if it has odd girth at least $\ell+2$.

The following is our main theorem.
\begin{theorem}\label{thm:full}
For any odd $t\ge 9$, there is a polynomial time algorithm deciding whether a $(P_{t}, \Codd)$-free graph is $3$-colourable (and giving a 3-colouring if one exists). 
\end{theorem}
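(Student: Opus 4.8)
The plan is to follow the by-now-standard route for $3$-colouring $P_t$-free graphs. First I would pass from $3$-colouring to \emph{list-$3$-colouring}, giving every vertex $v$ the list $L(v)=\{1,2,3\}$; recall that if every vertex has a list of size at most $2$ then list-$3$-colouring is an instance of $2$-SAT and hence solvable in polynomial time. So it suffices to show that, in polynomially many branches, one can reduce to instances in which all lists have size at most~$2$. We may assume $G$ is connected (treat its components separately) and not bipartite (otherwise $G$ is trivially $3$-colourable).

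The structural core of the argument is a shortest odd cycle $C$ of $G$. Such a cycle is induced, and since $G$ is $P_t$-free it contains no induced cycle on at least $t+1$ vertices (any $t$ consecutive vertices of such a cycle would induce a $P_t$); together with $\Codd$-freeness (odd girth at least $t-2$) and the parity of $t$, this forces $|C|\in\{t-2,t\}$. The next step is a domination statement: every vertex of $G$ lies within distance $2$ of $C$. To see this one takes, for a vertex $v$ at distance at least $3$ from $C$, a shortest path $P$ from $v$ to $C$ and extends it along $C$; the internal vertices of $P$ have no neighbour on $C$, and the vertex $x$ of $P$ at distance $1$ from $C$ has — because of the absence of short odd cycles — only few neighbours on $C$, all lying on a short arc, so walking along $C$ away from that arc produces a long induced path, contradicting $P_t$-freeness (possibly after first prepending a neighbour of $v$, which one shows must exist in the bad case). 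The same type of analysis pins down how the vertices at distance exactly $2$ from $C$ — call this set $D$ — attach to $C$: $D$ is independent, each vertex of $D$ has an independent neighbourhood contained in the first neighbourhood $N(C)\setminus V(C)$, and $P_t$-freeness severely restricts which distance-$1$ vertices a single vertex of $D$ can be adjacent to (two ``private'', spread-out attachments immediately create an induced $P_t$).

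With this in hand the algorithm is: enumerate all proper $3$-colourings of $C$; since $|C|\le t$ is a constant there are only $O(2^{t})$ of them. For each such colouring $\varphi$ of $C$, apply the usual list-propagation; every vertex of $N[C]$ then has a neighbour coloured by $\varphi$ and so acquires a list of size at most~$2$. It remains to deal with the vertices of $D$, whose lists are still of size~$3$. The goal — and, I expect, the hardest part of the proof — is to show that, given the tight structure of $D$ established above, the residual constraints that $D$ imposes on the (already size-$\le 2$) lists of $N(C)$ are either directly expressible in $2$-SAT, or become so after branching on only a bounded number of further vertices; once all lists have size at most $2$ one finishes via $2$-SAT. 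The precise bound ``odd girth at least $t-2$'' should be exactly what is needed to keep the attachment of $D$ (and of the distance-$1$ vertices to $C$) under control in this last step.
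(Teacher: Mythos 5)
Your high-level plan matches the paper's: pick a bounded-length odd cycle $C$ (the paper takes a longest one, which by $P_t$- and $\Codd$-freeness has length $t-2$ or $t$), show all vertices lie within distance $2$ of $C$, branch on precolourings of a bounded set, and finish with $2$-list-colouring via $2$-SAT. The domination-at-distance-$2$ argument you sketch is essentially Claim~\ref{lem:structure} (after dispatching vertices dominated by a non-neighbour via Lemma~\ref{EDGESIMPLE}), and your observation that $P_t$-freeness forces the attachment of distance-$1$ vertices to lie on a short arc of $C$ is the content of the sets $D_i, T_i, T'_i, S_i$ of Definition~\ref{def:DTTS}.

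However, there is a genuine gap exactly where you flag it yourself. Branching on the colourings of $C$ alone does \emph{not} suffice: after precolouring $C$ and updating, the vertices of $N(C)$ have lists of size $\le 2$, but the set $Y$ of distance-$2$ vertices (your $D$) still has full lists, and those vertices can propagate constraints through $N(C)$ that are not expressible as a $2$-SAT instance. Saying that the ``residual constraints \dots{} are either directly expressible in $2$-SAT, or become so after branching on only a bounded number of further vertices'' is the statement that needs proof, and it is the technical heart of the paper. Concretely, the paper constructs a bounded-size set $X\subseteq N(C)$ (via Lemmas~\ref{propdef:c2d+1} and~\ref{propoX}) such that after precolouring $V(C)\cup X$ and updating, every component of the residual full-list graph $G[V_3]$ is \emph{reducible}: either a single vertex dominated from $C$, or a bipartite component whose two sides can be safely assigned one canonical colour each (Section~\ref{sec:redu}). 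Establishing that $X$ exists requires a detailed analysis of how components of $G[Y]$ attach to $N(C)$ (Claims~\ref{lem:yy'}--\ref{lem:colourW2d+1} and~\ref{lem:edgesinN(C)_A}--\ref{nononono}), and there is moreover an extra difficult case — $t>9$, $G$ containing an induced $C_8$, and $|C|=t-2$ — where no such dominating set exists for a subset $Y^*$ of $Y$, and the paper must instead hand-colour $Y^*$ and a carefully chosen neighbourhood of it after each precolouring of $C$ (Section~\ref{sec:C8}). Your proposal neither identifies $X$, nor introduces the notion of reducibility needed to certify that the remaining lists can be shrunk, nor anticipates the $C_8$ case, so it is a correct outline of the strategy but not a proof.
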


In particular, this implies:
\begin{theorem}\label{thm:P9}
There is a polynomial time algorithm deciding whether a $(P_9, C_3, C_5)$-free graph is $3$-colourable (and giving a 3-colouring if one exists). 
\end{theorem}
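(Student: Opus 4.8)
Since $\Codd=\{C_3,C_5\}$ when $t=9$, Theorem~\ref{thm:P9} is the instance $t=9$ of Theorem~\ref{thm:full}, so I would prove the latter, for general odd $t\ge 9$.

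The plan is to work throughout with the list version of the problem. Attach the list $\{1,2,3\}$ to every vertex, maintain the invariant that no list has size greater than $3$, and use the standard \emph{propagation} rule: whenever some vertex has a list of size one, colour it with that colour and delete the colour from the lists of its neighbours; abort the current branch if a list becomes empty. We may assume $G$ is connected. The first dichotomy is: if $G$ is bipartite, output a $2$-colouring and stop. Otherwise $G$ contains an induced odd cycle $C$, and here both hypotheses are used at once — the odd-girth assumption forces $|C|\ge t-2$, while $P_t$-freeness forbids induced cycles of length at least $t+1$, so $|C|\in\{t-2,t\}$, a constant.

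Since $|C|\le t$, the cycle $C$ has at most $3^{t}=O(1)$ proper $3$-colourings; I would branch over all of them, and in each branch propagate the fixed colours of $C$. This colours $C$ and shrinks the list of every vertex of $N(C)$ to size at most $2$ (a size-$0$ list kills the branch), so the only vertices that can still carry a size-$3$ list lie at distance at least $2$ from $C$. The core of the proof is to show that the resulting \emph{residual} instance — the subgraph induced by the still-uncoloured vertices, equipped with the surviving lists — is solvable in polynomial time, so that the whole problem reduces to $2$-list-colouring, which is polynomial via $2$-SAT. The technical input here is a study of how $G$ sits around $C$: analysing a shortest path from a vertex $u$ to $C$ together with $C$ and invoking the odd-girth bound shows that the neighbours of any vertex on $C$ are confined to a sub-arc of bounded length, and feeding this back into $P_t$-freeness bounds the distance from any vertex to $C$ and controls how the size-$2$ constraints created around $C$ propagate inward. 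The aim is to deduce that after a further bounded amount of branching every list has size at most $2$ — equivalently, that the set of vertices still holding a size-$3$ list induces a graph of sufficiently restricted structure (for instance bipartite, or dominated by a bounded set) that it can be absorbed — after which $2$-SAT finishes. One also needs the routine converse bookkeeping: that any colouring assembled this way is proper, and that the branching tree has polynomial size.

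I expect the residual analysis to be the main obstacle: passing from ``the only obstruction to bipartiteness is one constant-size cycle, whose colouring we can afford to guess'' to ``hence the entire graph reduces to $2$-list-colouring.'' The analogous step fails for general $P_t$-free graphs — which is exactly why $3$-colouring is open there — so the argument must genuinely use that the excluded short cycles are \emph{odd} and reach up to length $t-4$. Concretely, I would expect the write-up to need a supporting structural lemma — a dominating-set statement for $(P_t,\Codd)$-free graphs, or a direct description of how vertices attach to an induced $C_{t-2}$ or $C_t$ — to organise the case analysis, used together with the standard reformulation that a graph is $3$-colourable if and only if it has an independent set whose deletion leaves a bipartite graph, which is what clarifies what the residual instance is really asking for.
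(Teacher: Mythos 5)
Your outline matches the paper's strategy at a high level — find the longest odd cycle $C$ (of length $t-2$ or $t$), branch over its colourings, propagate, and reduce to $2$-list-colouring via $2$-SAT — but what you explicitly flag as ``the main obstacle'' is precisely the content of the paper, and your proposal leaves it unresolved, so as written there is a genuine gap. Two concrete things are missing. First, branching over $V(C)$ alone does not suffice: the vertices of $Y$ (distance two from $C$) that need extra help are not individually few, so the paper constructs a \emph{bounded} set $X\subseteq N(C)$ (Claims~\ref{d=3}, \ref{lem:colourW2d+1} for $|C|=t-2$; Claims~\ref{coroM}, \ref{lem:colourW} for $|C|=t$) and branches over the colourings of $V(C)\cup X$, not just $V(C)$. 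Building $X$ requires a detailed structural analysis of how $N(C)$ splits into the sets $D_i,T_i,T'_i,S_i$ and how $Y$ attaches to them (Claims~\ref{lem:structure}--\ref{lem:completeYW} and \ref{lem:edgesinN(C)_A}--\ref{nononono}), and this is where $P_t$-freeness and the odd-girth bound do the work; your sketch gestures at this (``a dominating-set statement'') but does not supply it.

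Second, and more importantly, even after colouring $V(C)\cup X$ the residual size-$3$ vertices do \emph{not} all acquire a neighbour with a fixed colour, so you cannot simply ``branch a bounded amount further'': the remaining components of $G[V_3]$ can be nontrivial bipartite subgraphs of $G[Y]$, and there may be linearly many of them. The paper handles these without further branching via the notion of a \emph{reducible bipartite subgraph} (Section~\ref{sec:redu}) — a bipartite component whose two sides can each be assigned a canonical colour without affecting the rest of the instance — and the structural summary lemmas~\ref{propdef:c2d+1} and \ref{propoX} certify exactly that every residual component is either dominated by $X$, or trivial and dominated by a cycle vertex, or a reducible bipartite subgraph. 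Your phrase ``can be absorbed'' points in the right direction, but the justification that this canonical choice never loses a $3$-colouring (properties (III)(A)--(C) in Lemma~\ref{propdef:c2d+1}, and the corresponding (A)--(D) in Lemma~\ref{propoX}, together with the verification in Section~\ref{sec:magic}) is where the odd-girth and $P_t$-freeness constraints interact nontrivially and is the heart of the argument; without it the reduction to $2$-list-colouring does not go through. Also, the ``$3$-colourable iff there is an independent set whose removal leaves a bipartite graph'' reformulation you mention is not used in the paper and does not obviously simplify this step.
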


We remark that the complexity of $3$-colouring $(P_t, C_\ell)$-free graphs is open for $t\ge 8$ and both $\ell =3$, $\ell=5$. 
So, Theorem~\ref{thm:P9} can be seen as a first step to determining the complexity of this problem, or even to resolving the problem for $P_9$-free graphs.

The paper is organised as follows. We give a quick overview of our proof in Section~\ref{sec:over}, and then go through some preliminaries in Section~\ref{prelim}. We present the proof of Theorems~\ref{thm:full} and~\ref{thm:P9} in Section~\ref{sec:algo}. The proof relies on Lemma~\ref{lem:magic}, which is also presented in Section~\ref{sec:algo}. In Section~\ref{sec:stru}, we prove Lemma~\ref{lem:magic} for all pairs $t$, $G$ such that at least one of the following applies: $t=9$; $G$ is
$C_8$-free; or $G$ has an induced $C_{t}$.  In particular, the proof of  Theorem~\ref{thm:P9} is completed after Section~\ref{sec:stru}. 
The more complicated proof for $C_{t}$-free graphs with $t>9$ having an induced $C_8$ is postponed to Section~\ref{sec:C8}.
  Finally, Section~\ref{sec:time} is devoted to the running-time analysis of our algorithm, the running time being $O(n+m)$.

 \section{Overview of the proof}\label{sec:over}
 Our proof relies on  a structural analysis of a longest odd cycle $C$ of $G$ and it surroundings. This is done  in 
 Section~\ref{sec:stru}, distinguishing the two possible lengths of $C$: $t-2$ or $t$ (these cases are treated in Section~\ref{sec:shortcycle} and~\ref{sec:longcycle}, respectively). In the case that $C$ has length $t-2$, we assume the cycle $C_8$ is forbidden or that $t=9$ (for the time being).
 
 We distinguish different sets $D$, $T$, $T'$, $S$ of neighbours of $C$, according to the location of their neighbours on $C$, and prove all other vertices lie at most at distance 2 from $C$ (unless they are dominated by a non-neighbour, a case which can be safely ignored). We determine which sets can be adjacent to each other, and which cannot. The main results of  the structural analysis in Sections~\ref{sec:shortcycle} and~\ref{sec:longcycle} are summarised in Lemmas~\ref{propdef:c2d+1} and~\ref{propoX}. Most importantly, these lemmas identify a set $X$ of bounded size dominating important parts of the graph. 
 
 In Section~\ref{sec:magic}, we prove Lemma~\ref{lem:magic}, by colouring $V(C)\cup X$ in all possible ways. After doing so, all other vertices will either be adjacent to a coloured vertex, or are {\it reducible}, or belong to {\it reducible bipartite subgraphs}, by which we mean that they can be coloured in a canonical way (see Section~\ref{prelim} for a definition). This allows us to ultimately reduce the problem to a $2$-list-colouring instance, which is known to be polynomial time solvable. This finishes the proof, unless $t=9$ and there is a $C_8$.

We treat that case in Section~\ref{sec:C8}. Our previous approach almost works in the same way. The only problem is that if in addition, $C$ has length $t-2$, the structural analysis leaves us with one difficult case. Namely, there might be a set~$Y^*$ of vertices at distance $2$ from $C$ that cannot be dominated, or made reducible, in a similar way as the other sets. Our solution is that we test all colourings of~$C$, and for each such colouring, we also colour the vertices from $Y^*$, and some of their neighbours and second neighbours `by hand' in a canonical way (that will depend on the colouring of $C$). When doing so, we have to make sure that any vertex  {\it not} coloured `by hand' will not be affected by our colouring - either because it is already coloured in a compatible way, or more generally, because its list of available colours did not decrease. The main challenge in Section~\ref{sec:C8} is how to choose the
vertices that will be coloured together with $Y^*$, in way that other vertices will not be affected. This is achieved by a structural analysis.
We can then proceed as before. That is, we test all colourings of~$X$, colour the reducible vertices and subgraphs, and thus reduce the problem to a $2$-list-colouring instance.

\section{Preliminaries}\label{prelim}
\subsection{Basic definitions}

For $n\in\mathbb N$, let $[n]=\{0,1,...,n\}$. A graph $G$ has
vertex set $V(G)$ and edge set $E(G)$.  We only need to consider  connected graphs $G$, since one can solve the $3$-colouring problem on the components. We call $G$ (or one of its components $K$) \emph{trivial} if it has only one vertex.

If $G$ does not contain another graph $H$ as an induced subgraph, we say $G$ is {\it $H$-free}. If $\mathcal H$ is a family of subgraphs, and $G$ does not contain any of the graphs in $\mathcal H$ as an induced subgraph, we say $G$ is $\mathcal H$-free. Sometimes we write $(H, \mathcal H)$-free 
 to mean that the graph is $\{H\}\cup \mathcal H$-free. 

Let $v \in V(G)$ and $A, B \subseteq V(G)$. Then $N_B(v)$ is the set
of all neighbours of $v$ in $B$, and  $N_B(A)$ is the set of all neighbours
of vertices of $A$ in $B\setminus A$. If $B=V(G)$, we omit the subscript. 
Note that $N(\emptyset)=\emptyset$.
Also, $G[A]$ denotes the subgraph of $G$ induced by $A$, and $G-v$ is the graph obtained from $G$ by deleting $v$ and its adjacent edges.  We call a vertex $v$ {\it complete} to  $A$ if all $v$-$A$ edges are present, and if this holds for all $v\in B$, we say $B$ is complete to $A$.

A \emph{stable set} is a subset of pairwise non-adjacent vertices.
A graph $G$ is \emph{bipartite} if $V(G)$ can be partitioned into
two stable sets. 
A vertex $w\in V(G)$ {\it dominates} another vertex $v\in V(G)$ if $N(v)\subseteq N(w)$.
The following lemma is well-known and easy to prove.

\begin{lemma} \label{EDGESIMPLE}
Let $G, H$ be graphs such that $G$ is $H$-free, and let $v\in V(G)$. Then  $G-v$ is $H$\text{-free}, and 
if $v$ is  dominated by one of its non-neighbours, then \[\text{$G$ is $3$\text{-colourable} if and only if $G-v$ is $3$\text{-colourable}.}\]
\end{lemma}

\subsection{Palettes, updates and reducibility}\label{sec:redu}

In the context of 3-colouring, we call a family $\mathcal L$  of lists $L :=\{ L(v): \ v\in V(G) \}$, where $L(v)\subseteq\{1,2,3\}$ for every $v\in V(G)$, a {\it palette} of the graph $G$.  We call $L$ {\it feasible} if no vertex has an empty list. A subpalette $L'$ of $L$ is a palette such that $L'(v)\subseteq L(v)$ for each $v\in V(G)$. A graph $G$ and a palette $L$ of $G$ will often be written as a pair $(G,L)$. 
We say that $(G,L)$ is colourable if there is a subpalette $L'$ of $L$ such that $|L'(v)|=1$ for every vertex $v\in V(G)$, and $L'(u)\neq L'(w)$ for every edge $uw\in V(G)$.
We sometimes call a vertex with a list of size 1 a {\it coloured} vertex.

 \textit{Updating} the palette $L$ means obtaining a subpalette $L'$ of~$L$ by subsequently, for any vertex $v$ having a list  of size $1$ in the current palette, deleting its colour from the list of each neighbour $w$ of $v$. (For instance, if in a path all but one vertex $v$ have the list $\{1,2\}$, and $v$ has list $\{1\}$, then updating this palette gives a colouring of the path.)
If updating a palette $L$ of a graph $G$ results in the same palette $L$, we will say that $L$ is \textit{updated}.

Given $(G,L)$, call a vertex $v\in V(G)$  \textit{reducible (for colour $\alpha$)}  if $\alpha\in L(v)$ and $\alpha\notin L(w)$ for each $w\in N(v)$.
A trivial subgraph $K=\{v\}$  of $G$ is called reducible if $v$ is reducible. The subpalette $L'$ obtained from $L$ by setting $L'(v):=\alpha$ and keeping all other lists will be called the {\it reduction palette} for the reducible component $K$.

 A bipartite subgraph $K$of $G$, with partition classes $U_1$, $U_2$, is called {\it reducible (for colours $\alpha_1, \alpha_2$)}  if there are distinct colours $\alpha_1,\alpha_2$ such that for $j=1,2$,  $$\alpha_j\in \bigcap_{u\in U_j}L(u)\setminus \bigcup_{w\in N(U_{j})\setminus U_{3-j}}L(w).$$  The subpalette $L'$ obtained from $L$ by setting 
$L'(u) = \{\alpha_j\}$  for $j=1,2$ and  all $u\in U_j$, and keeping all other lists will be called the {\it reduction palette} for the reducible component $K$.

We leave the proof of the following lemma as an exercise to the reader.

\begin{lemma}
Let $G$ be a graph and let $L$ be palette of $G$. Suppose $K$ is a reducible subgraph of $G$, and $L'$ is the corresponding reduction palette.
Then $(G,L)$ is 3-colourable if and only if $(G,L')$ is 3-colourable. 
\end{lemma}

\section{The proof of Theorem~\ref{thm:full}}\label{sec:algo}

The heart of our proof is the following lemma, which we will prove in Sections~\ref{sec:stru} and~\ref{sec:C8}.
In order to state the lemma easily, let us define, for a graph $G$ and a palette $L$, the set $V_3(G,L):=\{ v\in V(G) : |L(v)|= 3 \}$.

\begin{lemma}\label{newlem:magic}\label{lem:magic}
Let $G$ be a $(P_{t},\Codd )$-free connected graph. Then there is a set $\mathcal L$ of updated feasible palettes such that
\begin{itemize}
\item  $|\mathcal L|\le 2^{O(d)}$ and  $\mathcal L$ can be found in polynomial time;
\item $G$ is $3$-colourable if and only if $(G,L)$ is $3$-colourable for some $L\in\mathcal L$; and
\item for each $L\in\mathcal L$, every component of $G[V_3(G,L)]$ is  reducible.
\end{itemize}
\end{lemma}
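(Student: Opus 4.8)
The plan is to build the palettes $\mathcal L$ by a guided brute-force colouring of a bounded-size "control set" together with a round of canonical colourings and palette updates. Let $C$ be a longest odd cycle of $G$; since $G$ is $\Codd$-free and has odd girth at least $t-2$, the cycle $C$ has length exactly $t-2$ or $t$ (a longer odd induced cycle would contain $P_t$, and a long odd cycle with a chord would contain a shorter odd cycle). First I would invoke the structural analysis of $C$ and its neighbourhood carried out in Section~\ref{sec:stru} (Lemmas~\ref{propdef:c2d+1} and~\ref{propoX}): this produces a set $X$ of bounded size, together with $V(C)$, such that $V(C)\cup X$ dominates "most" of $G$ in a strong sense. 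Concretely, after deleting vertices that are dominated by a non-neighbour (harmless by Lemma~\ref{EDGESIMPLE}), every remaining vertex of $G$ is either at distance at most $1$ from $V(C)\cup X$, or lies in one of a controlled list of reducible vertices or reducible bipartite subgraphs whose membership can be read off from the structure.

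The set $\mathcal L$ is then defined as follows. Enumerate all proper $3$-colourings of $G[V(C)\cup X]$; since $|V(C)\cup X| = O(d)$, there are at most $2^{O(d)}$ of these, and they can be listed in polynomial time. For each such partial colouring $c$, form the palette $L_c$ that assigns the singleton list $\{c(v)\}$ to every $v\in V(C)\cup X$ and $\{1,2,3\}$ to every other vertex, then \emph{update} $L_c$; discard it if it becomes infeasible. The resulting family is $\mathcal L$. The first two bullets are then immediate: $|\mathcal L|\le 2^{O(d)}$, it is computable in polynomial time, and $G$ is $3$-colourable if and only if some proper colouring of $G[V(C)\cup X]$ extends to all of $G$, i.e.\ if and only if $(G,L)$ is colourable for some $L\in\mathcal L$ (here we also use Lemma~\ref{EDGESIMPLE} to restore the deleted dominated vertices, colouring each like a neighbour of its dominator).

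The substance of the proof is the third bullet: for each $L\in\mathcal L$, every component of $G[V_3(G,L)]$ is reducible. After updating, every vertex with a list of size $3$ has all of its neighbours also carrying size-$3$ lists, so no vertex of $V_3(G,L)$ is adjacent to $V(C)\cup X$, nor to any vertex whose list was trimmed by the update. By the domination statement from the structural lemmas, such a vertex must therefore lie in one of the reducible vertices or reducible bipartite subgraphs isolated by the structural analysis — and, crucially, none of its neighbours outside that subgraph can have had its list reduced either, since any such neighbour would itself have to be adjacent to $V(C)\cup X$ or would propagate a reduction back. This is exactly the condition in the definition of reducibility (the required colour $\alpha$, or colours $\alpha_1,\alpha_2$, is still absent from the lists of all external neighbours). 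So each component of $G[V_3(G,L)]$ is one of these reducible structures.

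The main obstacle is verifying that the structural analysis really does account for \emph{every} vertex of $G$ that survives the update with a full list — in other words, that there is no "leftover" set of distance-$2$ vertices that is neither dominated by $V(C)\cup X$ nor packaged into a reducible subgraph. For $t=9$ in the presence of an induced $C_8$ (with $C$ of length $t-2$) this is precisely where the naive approach fails, because of the problematic set $Y^*$; handling it requires the extra hand-colouring argument of Section~\ref{sec:C8}, where $Y^*$ and a carefully chosen set of its (second) neighbours are coloured canonically for each colouring of $C$ in a way that provably does not shrink the list of any vertex outside this set. Once that case is absorbed, the bookkeeping above goes through uniformly, and the lemma follows.
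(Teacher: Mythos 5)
Your overall plan follows the paper's strategy: take a longest odd cycle $C$ (length $t-2$ or $t$), obtain a bounded control set $X$ from the structural lemmas (Lemmas~\ref{propdef:c2d+1} and~\ref{propoX}), enumerate colourings of $V(C)\cup X$, update, and argue that the surviving size-$3$ components are reducible. However, two concrete errors undermine the substance.

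First, the claim that ``after updating, every vertex with a list of size $3$ has all of its neighbours also carrying size-$3$ lists'' is false, and the follow-up --- that ``none of its neighbours outside that subgraph can have had its list reduced either'' --- is not only false but incompatible with the very definition of reducibility you are invoking. Updating only propagates from singleton lists: if $|L(v)|=3$ after updating, then $v$ has no neighbour with a singleton list, but $v$ can (and generically will) have neighbours with size-$2$ lists, obtained exactly because those neighbours are adjacent to a coloured vertex. The reducibility argument in Section~\ref{sec:magic} hinges on this: the structural lemmas guarantee that the external neighbours of the component \emph{have} had their lists shortened, and moreover that a common colour $\alpha$ (respectively a pair $\alpha_1,\alpha_2$) is missing from all of them. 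Your own parenthetical remark that ``$\alpha$ is still absent from the lists of all external neighbours'' requires those lists to have been shortened, so the paragraph contradicts itself. As written, the third bullet --- the whole content of the lemma --- does not follow.

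Second, the case split is reversed. In the paper, $t=9$ falls squarely inside the easy class $\mathcal G^*$ (Claim~\ref{lem:y_i^*}(b) is the $t=9$ clause and Claim~\ref{d=3} provides the set $Q$ that already dominates the corresponding vertices). The obstruction $Y^*$ requiring the additional hand-colouring of Section~\ref{sec:C8} arises precisely when $t>9$, $G$ is $C_t$-free, and $G$ contains an induced $C_8$. In that case the procedure also differs from what you sketch: the paper colours $V(C)$ first, canonically colours the sets $F(Y^*_i)$ via a nice reduction (Lemma~\ref{lem:reduction}), and only afterwards enumerates colourings of $X\setminus V(C)$; colouring $V(C)\cup X$ in one shot, as you propose, does not obviously cope with $Y^*$.
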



We will also need a result on
the list-colouring problem with lists of size at most 2. Given a graph $G$
and a finite list $L(v) \subseteq \mathbb{N}$  for each vertex
$v\in V(G)$ (with no restriction on the total number of colours)
 the {\it list-colouring problem} asks for a
a colouring of all vertices with colours from their lists. If $|L(v)| \leq 2$ for each vertex $v\in V(G)$, the
problem can be solved in $O(|V(G)|+|E(G)|)$ time, by reducing the $2$-list-colouring instance to a $2$-SAT instance \cite{E-R-T-2-lc,Viz-color}.

%
With this result and Lemma~\ref{lem:magic} at hand, we are ready to present the proof of our main result.
\begin{proof}[Proof of Theorem~\ref{thm:full}]
Given $G$, we apply Lemma~\ref{lem:magic} to obtain a set  $\mathcal{L}$ of palettes. For each $L\in\mathcal L$, and each reducible component of $(G,L)$, we consider the reduction palette $L'$ for $L$ and solve the $2$-list-colouring instance $(G,L')$. This either gives us a valid $3$-colouring for one of the palettes $L$, and thus for our instance $G$, or proves that no such colouring exists.
\end{proof}

The running time of the algorithm and the time of finding the set $\mathcal L$ from Lemma~\ref{lem:magic} will be analysed in Section~\ref{sec:time}.

Sections~\ref{sec:stru} and~\ref{sec:C8} are devoted to the proof of Lemma~\ref{lem:magic}. As discussed above we first concentrate on the easier case that either $t=9$, or $G$ is $C_8$-free, or $G$ has an induced $C_{t}$, which we deal with in Section~\ref{sec:stru}, and leave the more complicated general case for Section~\ref{sec:C8}. 
For simplicity of notation, let $\mathcal G^*$ denote the class of all $(P_{t}, \Codd)$-free graphs such that
at least one of the following holds:
\begin{itemize}
\item $t=9$; or
\item $G$ is $C_8$-free; or
\item $G$ has an induced $C_{t}$.
\end{itemize}

\section{The proof of Lemma~\ref{lem:magic} for  $G\in\mathcal G^*$}\label{sec:stru}

In this section, we will prove Lemma~\ref{lem:magic} for all $G\in\mathcal G^*$. In particular, as stated earlier, this section is all that is needed for the proof of Theorem~\ref{thm:P9}. 

The  section is organised as follows. First, we investigate the structure of the graph~$G$, proving some basic observations in Section~\ref{basecyc}, and then distinguishing two cases according to whether or not~$G$ has  an induced cycle of length $t$. The structural analysis for the case that~$G$ is $C_{t}$-free is treated in Section~\ref{sec:shortcycle}, while the case that~$G$ does have a cycle of length $t$  is treated in Section~\ref{sec:longcycle}. 
All structural properties we find in these two sections will be conveniently summarised in two lemmas, Lemma~\ref{propdef:c2d+1} (at the end of Section~\ref{sec:shortcycle}) and Lemma~\ref{propoX} (end of Section~\ref{sec:longcycle}). Then, in Section~\ref{sec:magic}, where we will prove Lemma~\ref{lem:magic} for  $G\in\mathcal G^*$, we only need to refer to these two lemmas.

\subsection{The base cycle and its neighbours}\label{basecyc}

We start with a general definition describing the neighbours and second neighbours of a cycle~$C$ of~$G$.

\begin{defn}\label{def:DTTS}
Let $\ell\in \mathbb{N}$  and let $C=c_0, c_1,...,c_{\ell}, c_0$ be a cycle in $G$. For $i\in[\ell]$, define 
\begin{itemize}
    \item $D_i:=\big\{v\in N(C): vc_j \in E(G)\text{ if and only if } j= i\big\}$;
    
    \item  $T_i:=\big\{v\in N(C): vc_j \in E(G)\text{ if and only if } j\in\{ i, i+2\}\big\}$;
    
    \item  $T'_i:=\big\{v\in N(C): vc_j \in E(G)\text{ if and only if } j\in\{ i, i+4\}\big\}$; and
    
    \item  $S_i:=\big\{v\in N(C): vc_j \in E(G)\text{ if and only if } j\in\{ i, i+2, i+4\}\big\}$.
\end{itemize}
Set $$D:=\bigcup_{i\in[\ell]}D_i, \ \  T:=\bigcup_{i\in[\ell]}T_i, \ \ T':=\bigcup_{i\in[\ell]}T'_i\text{ \ and \ }S:=\bigcup_{i\in[\ell]}S_i,$$ and let  $Y$ denote the set of vertices located at distance two from $C$.
\end{defn}

Clearly, since we can assume $G$ is not bipartite (as otherwise the $3$-colouring problem is well known to be polynomial), $G$ has an odd cycle $C$. Because of the forbidden subgraphs, the length of the longest odd cycle is either $t-2$ or $t$. We will see now that in each of these two cases, 
 $V(G)$ can be  decomposed into some of the sets that Definition~\ref{def:DTTS} gives for $C$.

\begin{claim}\label{lem:structure}
Suppose that no vertex in $G$ is dominated by any of its non-neighbours.\footnote{Because of Lemma~\ref{EDGESIMPLE}, we need not  worry about dominated vertices, and Claim~\ref{lem:structure} is the only place where we need to exclude them. In an algorithmic implementation of our method, one will ignore such vertices if detected by Claim~\ref{lem:structure}, and colour them at the very end (in case a colouring is found).
} Let $C$ be a longest odd cycle in $G$.
\begin{enumerate}[(a)]
 \item If $|V(C)|=t-2$,  then 
$V(G)=V(C) \cup N(C) \cup Y$,
with $N(C)=D\cup T$.
\item If $|V(C)|=t$,  then 
$V(G)=V(C) \cup N(C) \cup Y$,
with $N(C)=T \cup T' \cup S$. 
\end{enumerate}
Furthermore, for each $i\in [|V(C)|-1]$, each of the sets  $D_i$, $T_i$, $T'_i $ and $S_i$ is  stable.
\end{claim}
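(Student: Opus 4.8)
The plan is to analyse, for a longest odd cycle $C$, which neighbourhoods on $C$ a vertex $v\in N(C)$ can realise, and then to bound the distance of an arbitrary vertex to $C$. First I would record the key parity/length constraint: if $v\in N(C)$ has two neighbours $c_i,c_j$ on $C$, then the two arcs of $C$ between $c_i$ and $c_j$, say of lengths $a$ and $b$ with $a+b=|V(C)|$, can be closed up through $v$ into two cycles of lengths $a+2$ and $b+2$. Since $|V(C)|$ is odd, exactly one of $a,b$ is odd, so exactly one of these two cycles is odd; as $C$ is a \emph{longest} odd cycle, that odd cycle has length at most $|V(C)|$, forcing the even one to have length at most $|V(C)|+1$, i.e.\ consecutive neighbours of $v$ on $C$ are at arc-distance $2$ (an odd gap of $1$ is impossible because it would create a chord and a shorter odd cycle, or rather a shorter \emph{even} detour plus parity bookkeeping — the clean statement is that any two neighbours of $v$ are at even arc-distance, and in fact at arc-distance exactly $2$ along one side). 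Combined with $P_t$-freeness (a long induced path through $v$ and a long arc of $C$), one shows $v$ can have at most three neighbours on $C$, all mutually at arc-distance $2$ within a window of length $4$; this yields exactly the types $D_i$ (one neighbour), $T_i$ (neighbours $i,i+2$), $T'_i$ (neighbours $i,i+4$, only possible when the intermediate vertex $c_{i+2}$ is \emph{not} a neighbour), and $S_i$ (neighbours $i,i+2,i+4$). Then I would separate the two cases by length: when $|V(C)|=t-2$, an induced $P_t$ argument rules out $T'$ and $S$ (a vertex attached at $i$ and $i+4$ together with the long arc of $C$ gives an induced $P_t$, using that $C$ has no chords since it is a longest odd cycle and odd-girth considerations forbid the short odd detour), so $N(C)=D\cup T$; when $|V(C)|=t$, the shorter cycle length leaves room for $T,T',S$ but a single neighbour $D_i$ would extend to an induced $P_{t+1}$, so $D=\emptyset$ and $N(C)=T\cup T'\cup S$.

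Next I would prove $V(G)=V(C)\cup N(C)\cup Y$, i.e.\ that no vertex lies at distance $\geq 3$ from $C$. Suppose $u$ is at distance $3$, with a path $u\,x\,y\,z$ into $C$, $z\in V(C)$, $x,y\notin V(C)\cup N(C)$... actually $y\in N(C)$; reorganising, take $u$ at distance exactly $3$, $y\in Y$ a neighbour of $u$, and $z\in N(C)$ a neighbour of $y$. Since $z\in N(C)$ has (at least) one neighbour $c_i$ on $C$, the path from $u$ through $y$, $z$, $c_i$ and then along the longer arc of $C$ is an induced path — after pruning chords, and here is where we use that $C$ has no chords and where the hypothesis on dominated vertices enters, since a vertex $x$ on this path that has extra edges back to $C$ could create chords, but such $x$ is then dominated by a non-neighbour unless... — of length at least $t$, contradicting $P_t$-freeness. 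The dominated-vertex hypothesis is exactly what lets us assume the path we build is induced: any shortcut from a vertex $v$ on the tail $u\,y\,z$ to a later vertex makes $v$'s neighbourhood contained in that later vertex's, i.e.\ dominated. This is the step I expect to require the most care.

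Finally, the stability of each $D_i,T_i,T'_i,S_i$ is short: if $v,v'$ were adjacent vertices in one such set, they share the same $2$ or $3$ neighbours on $C$ at arc-distance $2$, and $v,v'$ together with the short arc $c_i,c_{i+1},c_{i+2}$ of $C$ form a short odd cycle (e.g.\ $v v' c_i$ if they share $c_i$, a triangle, or $v v' c_{i+2} c_{i+1} c_i$, a $C_5$), which is excluded by the odd-girth hypothesis $G$ is $\Codd$-free, provided $t\geq 9$ so that $t-4\geq 5$ and these short odd cycles are genuinely forbidden. The main obstacle overall is the distance bound: carefully extracting a genuinely \emph{induced} $P_t$ (or $C_t$) from the ad hoc paths, handling all the ways chords could appear, and invoking the no-dominated-vertex hypothesis at precisely the right moments to kill those chords.
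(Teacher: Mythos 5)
Your plan for classifying the possible neighbourhoods on $C$ and the stability argument are essentially fine in spirit (the stability claim follows immediately from triangle-freeness alone, since any two vertices in the same $D_i$, $T_i$, $T'_i$ or $S_i$ share the neighbour $c_i$). One caveat: you attribute the exclusion of $T'$ and $S$ in case (a) to $P_t$-freeness plus a ``short odd detour,'' but the clean reason is that a vertex $v$ adjacent to $c_i,c_{i+4}$ on a $C_{t-2}$ closes the long arc into an \emph{induced odd cycle of length} $t-4$, which $\Codd$-freeness forbids. Your version of this argument is muddled but the conclusion is right.

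The real gap is in the distance bound. Your candidate path is $u,y,z,c_i,c_{i+1},\ldots$, with only three off-cycle vertices. Count it: in case (a), if $z\in T_0$ (adjacent to $c_0,c_2$), the longest arc of $C$ starting at a neighbour of $z$ and avoiding the other neighbour has $t-4$ vertices, so your path has at most $3+(t-4)=t-1$ vertices; similarly in case (b) with $z\in T,T',S$. You only reach $t$ vertices when $z$ has a \emph{single} neighbour on $C$, which never happens in case (b) and need not happen in case (a). You are one vertex short, and this is precisely what the no-domination hypothesis is for. The paper takes $v_1v_2v_3$ with $v_i$ at distance $i$ from $C$, observes that $v_1$ is a non-neighbour of $v_3$ and hence (by hypothesis) does not dominate it, so $v_3$ has a neighbour $w\notin N(v_1)$; since $G$ is triangle-free and $v_3$ is at distance $3$, this $w$ is adjacent to none of $v_2,v_1$ nor to any vertex of $C$, giving the \emph{fourth} off-cycle vertex and an induced $P_t$ on $w,v_3,v_2,v_1,c_0,c_1,\ldots,c_{t-5}$. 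Your stated use of the hypothesis --- ``any shortcut from a vertex on the tail to a later vertex makes it dominated'' --- does not hold: a single extra edge from $v$ to a later vertex $w$ does not imply $N(v)\subseteq N(w)$. So as written your plan neither produces a long enough path nor invokes the no-domination hypothesis correctly; you need to use it to \emph{extend} the path by one vertex, not to ``prune chords.''
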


\begin{proof}
Since $G$ has no odd induced cycles of length up to $t-4$, and no induced $P_{t}$, it is not hard to see that in case (a), $N(C)=D\cup T$, and in case (b), $N(C)=T \cup T' \cup S$. Moreover, each of the sets $D_i$, $T_i$, $T'_i $ and $ S_i$, for every $i\in [t-3]$, or $i\in[t-1]$, is stable, as~$G$ is triangle-free. 

It only remains to show that no vertex of $G$ lies at distance $3$ from $V(C)$. Assume otherwise, and let $v_1v_2v_3$ be an induced path such that $v_i$ lies at distance $i$ from $V(C)$. Suppose $C=c_0, c_1,...,c_{\ell}, c_0$, and that $v_1$ is adjacent to $c_0$, but  not adjacent to any  $c_i$ with $i\le t-5$.
Since~$v_1$ does not dominate $v_3$, there is a neighbour $w$ of $v_3$ that is not a neighbour of~$v_1$, and since $G$ is triangle-free, $w$ is not adjacent to $v_2$. Furthermore, since $v_3$ is at distance $3$ from~$C$, we know that~$w$ does not have any neighbours on $C$. Thus $w, v_3, v_2, v_1, c_0, c_{1}, c_{2}, ...,c_{t-5} $ is an induced $P_{t}$, which is impossible.
\end{proof}

We can deduce some useful information on the components of $G[Y]$.

\begin{claim}\label{Ybipsamenbhds}
In the situation of Claim~\ref{lem:structure}, every nontrivial component $K$ of $G[Y]$ is bipartite and vertices of the same bipartition class of $K$ have identical neighbourhoods in $N(C)$.
\end{claim}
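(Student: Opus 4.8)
\textbf{Proof plan for Claim~\ref{Ybipsamenbhds}.}
The plan is to handle the two assertions in turn: first that each nontrivial component $K$ of $G[Y]$ is bipartite, and then that two vertices in the same class of the (essentially unique) bipartition of $K$ see exactly the same vertices of $N(C)$. Throughout I will use the standing assumptions of Claim~\ref{lem:structure}, namely that $G$ is $(P_t,\Codd)$-free, that $C$ is a longest odd cycle, so $|V(C)|\in\{t-2,t\}$, and that no vertex is dominated by a non-neighbour; I will also freely use that $G$ is triangle-free and that $V(G)=V(C)\cup N(C)\cup Y$.

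For bipartiteness: since $G$ has no odd cycle shorter than $t-2$ and no induced odd cycle of length between $5$ and $t-4$ (and is triangle-free), the only short odd cycles $G$ can contain have length $t-2$ or $t$. So if a component $K$ of $G[Y]$ were non-bipartite, it would contain an odd cycle, and since $K\subseteq Y$ lies at distance $2$ from $C$, the shortest odd cycle inside $K$ would have length at least $t-2$. First I would argue that an odd cycle $C'$ entirely inside $Y$ of length $\ge t-2$ already gives a contradiction: pick any $y\in V(C')$ and a neighbour $z$ of $y$ on the path from $y$ to $C$ realising the distance; then $z$ has a neighbour on $N(C)$, which in turn has a neighbour on $C$, and appending this short ``bridge'' of length $2$ to a long enough sub-path of $C'$ produces an induced $P_t$ (one has to be a little careful to choose a chord-free sub-path of $C'$, but since $C'$ can be taken induced by minimality of odd girth considerations, and since the bridge vertices have no other neighbours on $C'$ by triangle-freeness and by the distance assumption, this works). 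Hence $K$ contains no odd cycle and is bipartite.

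For the neighbourhood statement: let $K$ be a nontrivial component of $G[Y]$ with bipartition classes $A$, $B$, and suppose for contradiction that $y,y'\in A$ have $N_{N(C)}(y)\neq N_{N(C)}(y')$; say $u\in N(C)$ is adjacent to $y$ but not to $y'$. Since $K$ is connected and $y,y'$ lie in the same class, there is an even-length induced path $P$ from $y$ to $y'$ inside $K$ (I would take a shortest $y$--$y'$ path in $K$, which is induced and of even length since $A,B$ is a bipartition). Now $u$ is not adjacent to $y'$, and I claim we may choose $P$ and $u$ so that $u$ has no neighbour in the interior of $P$: if $u$ had a neighbour on $P$ we could replace $y'$ by the last such neighbour along $P$, shortening the path, and still have a vertex of $N(C)$ non-adjacent to the new endpoint (or finish directly). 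Then $u$ together with $P$ and a length-$2$ bridge from $u$ down to $C$, extended along $C$, yields an induced path; making this long enough (using $|V(C)|\ge t-2$ and $|V(P)|\ge 3$) produces an induced $P_t$, a contradiction. A symmetric argument handles the class $B$.

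The main obstacle I anticipate is the bookkeeping in the ``induced $P_t$'' arguments: in both parts one builds a long induced path by concatenating a path inside $Y$ (in $K$ or $C'$), a bridge of length $2$ through $N(C)$, and a sub-path of $C$, and one must rule out unwanted chords between these three pieces. Triangle-freeness kills edges between consecutive pieces; the distance-$2$ hypothesis on $Y$ kills edges from $K$ to $C$; edges from the $N(C)$-bridge to $C$ other than the one used are controlled by the fact (from Claim~\ref{lem:structure}) that $N(C)$-vertices have their neighbours on $C$ confined to a short arc ($D$, $T$, $T'$, or $S$ patterns), so a sufficiently long sub-arc of $C$ avoids them; and edges from the $N(C)$-bridge into the interior of the $Y$-path are exactly what we arranged to avoid by choosing the path minimally. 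Once these chord-exclusions are in place, the length count is routine, and the two cases $|V(C)|=t-2$ and $|V(C)|=t$ are essentially identical.
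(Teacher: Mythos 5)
Both halves of your plan rest on a chord-exclusion claim that is not justified and is, in fact, false in general. In the bipartiteness step you assert that the bridge vertex $z\in N(C)$ has ``no other neighbours on $C'$ by triangle-freeness and by the distance assumption''. The distance assumption only controls $c\in V(C)$, and triangle-freeness only rules out the two $C'$-neighbours of $y$; nothing prevents $z$ from seeing further vertices of $C'$ at distance $\ge 2$ from $y$ along $C'$ (such an edge creates one even cycle and one odd cycle through $z$, and the odd one can simply be long enough to be permitted). So the sub-path of $C'$ you append to the bridge need not be chord-free, and the $P_t$ does not materialise. In the neighbourhood step, the proposed repair -- ``replace $y'$ by the last neighbour $w$ of $u$ along $P$'' -- destroys the very hypothesis you need, since $u$ is then adjacent to \emph{both} endpoints of the shortened path, so there is no longer a vertex of $N(C)$ seeing one endpoint and not the other. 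And if one instead truncates $P$ at $u$'s first interior neighbour, a careful length count (for instance when $|V(C)|=t-2$ and $u\in T_i$, so that only $t-4$ vertices of $C$ are available) can leave you one vertex short of a $P_t$. Neither gap is routine bookkeeping.

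The paper avoids both issues by reversing the order and working locally. It first establishes a single observation: for every induced three-vertex path $y_1y_2y_3$ in $G[Y]$, the endpoints $y_1$ and $y_3$ have the same neighbours in $N(C)$. This is precisely your $P_t$-construction, but with the $Y$-piece restricted to three vertices, which is the length at which the chord problem disappears: $u$ misses $y_2$ by triangle-freeness and misses $y_3$ by hypothesis, and the $C$-arc is chosen on the side of $c_i$ away from $u$'s few other $C$-neighbours, giving an induced path on at least $t$ vertices in every case ($D$, $T$, $T'$, $S$). The same-neighbourhood claim for a whole class of $K$ then follows by stepping two at a time along a shortest $y$--$y'$ path in $K$, and bipartiteness follows because applying the observation around an (induced, shortest) odd cycle in $G[Y]$ forces all its vertices to have a common neighbour in $N(C)$, producing a $C_3$. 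Isolate and prove that $P_3$ observation first; the rest is then a few lines.
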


\begin{proof}
Since $G$ is $(P_{t}, C_3)$-free, we know that for every induced path $y_1y_2y_3$ of length 3 in $G[Y]$, vertices $y_1$ and $y_3$ have the same neighbours in $N(C)$. In order to see that $G[Y]$ is bipartite, apply the observation from the previous sentence to any odd cycle in $G[Y]$, thus generating a $C_3$ and therefore, a contradiction.
\end{proof}

\subsection{The structure of $G$ if it is $C_{t}$-free }\label{sec:shortcycle}

In this subsection we  asume that  $G$ is  $C_{t}$-free, but does have an induced cycle  $C=c_0, c_1, ..., c_{t-3}, c_0$ of length $t-2$.  Let $D$, $T$ and $Y$ be the sets given by Definition~\ref{def:DTTS} and Claim~\ref{lem:structure} for cycle $C$. Indices are  taken modulo $t-2$.
 
 For understanding the structure of $G[N(C)\cup Y]$, we first analyse the edges between $Y$ and~$N(C)$.

\begin{claim}\label{DY...}
No vertex from $Y$ can have neighbours both in $Y$ and  in $D$.
\end{claim}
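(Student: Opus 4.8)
The plan is to derive a contradiction from the existence of a vertex $y \in Y$ with a neighbour $y' \in Y$ and a neighbour $d \in D$. First I would set up the configuration: since $d \in D$, there is a unique index $i \in [t-3]$ with $dc_i \in E(G)$, and $d$ has no other neighbour on $C$. Since $y'\in Y$ lies at distance two from $C$, it has no neighbour on $C$ at all, and in particular $y'$ is not adjacent to $c_i$; also $y'\ne d$ since $d\in N(C)$. Because $G$ is triangle-free, $y'$ is not adjacent to $d$, so $y', y, d, c_i$ is an induced path $P_4$ (using that $y$ is adjacent to both $y'$ and $d$, and $y$, being in $Y$, is not adjacent to $c_i$).

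Next I would extend this $P_4$ along the cycle $C$ to build a forbidden induced $P_t$. Reading $C$ starting from $c_i$ in the direction that avoids returning quickly, the path $y', y, d, c_i, c_{i+1}, c_{i+2}, \ldots, c_{i+(t-4)}$ has $4 + (t-3) = t+1$ vertices, which is too long, so I would instead take $y', y, d, c_i, c_{i+1}, \ldots, c_{i-3}$, i.e.\ stop three vertices short of closing the cycle, giving exactly $t$ vertices. I need to check this is an induced path. The segment $c_i, c_{i+1}, \ldots, c_{i-3}$ is an induced path since $C$ is an induced cycle of length $t-2 \ge 7$. The vertex $d$ is adjacent only to $c_i$ among these (by definition of $D$), and $y$ is adjacent to none of them ($y\in Y$), and $y'$ is adjacent to none of them either (it has no neighbour on $C$). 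Finally $y'$ is not adjacent to $d$ (triangle-freeness, via $y$), and $y'y, yd$ are the only edges among $\{y',y,d\}$ since $y'd\notin E(G)$. Hence $y', y, d, c_i, c_{i+1}, \ldots, c_{i-3}$ is an induced $P_t$, contradicting that $G$ is $P_t$-free.

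The only subtle point, and the part I would be most careful about, is making sure the arithmetic with the cycle indices is right: the cycle $C$ has $t-2$ vertices $c_0,\ldots,c_{t-3}$, and I want a sub-path of $C$ on exactly $t-3$ consecutive vertices (so that together with $y', y, d$ we get $t$ vertices), which leaves exactly one vertex of $C$ unused and guarantees the sub-path is chordless. Since $t \ge 9$, we have $t-2 \ge 7 > t-3 \ge 6$ is false --- actually $t-3$ consecutive vertices out of $t-2$ means only one vertex omitted, so I should double-check whether a chord can appear: in an induced cycle, any proper sub-path of consecutive vertices is an induced path, so this is fine. I would also note that the statement and argument do not need the hypothesis that no vertex is dominated by a non-neighbour, beyond what is already folded into Claim~\ref{lem:structure}; the $P_t$-freeness and triangle-freeness do all the work. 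This completes the proof.

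\begin{proof}
Suppose for contradiction that some $y\in Y$ has a neighbour $y'\in Y$ and a neighbour $d\in D$. Let $i\in[t-3]$ be the unique index with $dc_i\in E(G)$. Since $y,y'\in Y$, neither $y$ nor $y'$ has a neighbour on $C$; in particular both are non-adjacent to all of $c_i,c_{i+1},\ldots,c_{i-3}$, where indices are taken modulo $t-2$. Since $G$ is triangle-free and $y$ is adjacent to both $y'$ and $d$, we have $y'd\notin E(G)$. Also $y'\ne d$ as $d\in N(C)$ while $y'\in Y$.

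Consider the sequence $P:= y', y, d, c_i, c_{i+1}, \ldots, c_{i-3}$. It has $3+(t-3)=t$ vertices. The sub-path $c_i, c_{i+1}, \ldots, c_{i-3}$ is an induced path, being a set of $t-3$ consecutive vertices of the induced cycle $C$ (only one vertex of $C$ is omitted). The vertex $d$ is adjacent, among the vertices of $C$ on $P$, only to $c_i$, by definition of $D$. The vertex $y$ is adjacent to no vertex of $C$, and to $y'$ and $d$; and $y'$ is adjacent to no vertex of $C$, to $y$, and not to $d$. Hence $P$ is an induced $P_t$ in $G$, contradicting that $G$ is $P_t$-free.
\end{proof}
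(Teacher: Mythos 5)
Your approach is the same as the paper's (it cites $(P_t, C_3)$-freeness as the reason), but there is an off-by-one error in the path length that means the proof as written does not yield a contradiction. The cycle $C$ has $t-2$ vertices, indices taken modulo $t-2$. The segment $c_i, c_{i+1}, \ldots, c_{i-3}$ omits \emph{two} vertices of $C$, namely $c_{i-2}$ and $c_{i-1}$, so it has $t-4$ vertices, not $t-3$ as you claim (check: for $t=9$ and $i=0$ it is $c_0,c_1,c_2,c_3,c_4$, which is $5=t-4$). Consequently your path $y', y, d, c_i, \ldots, c_{i-3}$ has only $t-1$ vertices, and an induced $P_{t-1}$ is not forbidden.

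The fix is to run one step further along the cycle, ending at $c_{i-2}$ so that only $c_{i-1}$ is omitted. Then $y', y, d, c_i, c_{i+1}, \ldots, c_{i-2}$ has $3+(t-3)=t$ vertices, and it is still induced: the segment of $C$ is an induced path since it is a proper consecutive sub-path of the induced cycle; $d$ meets it only at $c_i$ because $d\in D_i$; $y$ and $y'$ are non-adjacent to all of $C$; $y'd\notin E(G)$ by triangle-freeness; and $c_{i-2}$'s only neighbours on $C$ are $c_{i-3}$ (its predecessor on the path) and $c_{i-1}$ (omitted), while $c_{i-2}$ is not adjacent to $d$, $y$, or $y'$. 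With this correction the argument gives the desired induced $P_t$ and the contradiction.
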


\begin{proof}
This follows directly from the fact that $G$ is $(P_{t}, C_3)$-free.
\end{proof}

\begin{claim}
\label{lem:y_i^*}
For each $y\in Y$ there is an $i\in[t-3]$ such that at least one of the following holds:
\begin{enumerate}[(a)] \item $N(y)\setminus Y\subseteq D_{j}\cup  T_{i}\cup T_{i+2}$ for some $j\in \{i-2, i, i+2, i+4\}$; or
\item  $t=9$ and 
$N(y)\subseteq D_{i}\cup T_{i}\cup T_{i+2}\cup D_{i+4}$.
\end{enumerate}
\end{claim}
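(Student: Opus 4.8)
\textbf{Proof proposal for Claim~\ref{lem:y_i^*}.}

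The plan is to fix a vertex $y\in Y$ and understand how its non-$Y$ neighbours can be distributed among the sets $D_j$ and $T_j$. Every vertex of $N(y)\setminus Y$ lies in $D\cup T$ by Claim~\ref{lem:structure}(a), and by Claim~\ref{DY...}, if $y$ has a neighbour in $Y$, then all of $N(y)\setminus Y$ lies in $T$. So I would split into two cases according to whether $N_Y(y)=\emptyset$. The main tool throughout is that $G$ is $(P_t,C_3)$-free, together with the fact (from Claim~\ref{lem:structure}) that all vertices in $D\cup T$ have neighbours on $C$ only in a tightly controlled window of indices.

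First I would show that the $C$-neighbourhoods of the vertices in $N(y)\setminus Y$ are confined to a short arc of $C$. Pick a neighbour $x$ of $y$ in $D\cup T$; say the indices of $C$-neighbours of $x$ lie in $\{i,i+2\}$ (the case $x\in D_i$ being the degenerate sub-case $i=i+2$). If some other neighbour $x'$ of $y$ had a $C$-neighbour $c_k$ with $k$ far from $\{i,i+2\}$ (more than, say, two steps outside the window), then starting from $c_k$ one can walk along $C$ in the direction away from $c_i$ for $t-5$ consecutive vertices, none of which is adjacent to $x$, $x'$, or $y$ (using triangle-freeness and the fact that $C$ is induced and $y\notin N(C)$), and then append $x',y,x,c_i$ (or $c_{i+2}$) to close off an induced $P_t$; this contradicts $P_t$-freeness. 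I would make the index-counting precise, but the upshot is that every neighbour of $y$ in $D\cup T$ has all its $C$-neighbours within a window of length at most $4$ or $6$ around some common index~$i$, which forces $N(y)\setminus Y\subseteq D_{i-2}\cup\cdots\cup D_{i+4}\cup T_{i-2}\cup\cdots\cup T_{i+2}$ after relabelling.

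Next I would refine this window. Suppose $y$ has two neighbours $x\in T_i$ and $x'\in T_{i'}$ with $i\ne i'$; since $x$ sees $c_i,c_{i+2}$ and $x'$ sees $c_{i'},c_{i'+2}$, and none of $c_i,c_{i+2},c_{i'},c_{i'+2}$ may coincide with a neighbour of both (else a $C_3$ via $x,y$ or $x',y$ and a short arc, or an induced $P_t$ using the complementary arc of $C$), the only surviving possibility is $\{i,i+2\}$ and $\{i',i'+2\}$ overlapping in exactly the pattern that makes $\{i,i'\}$ two consecutive even-distance indices — concretely $i'=i+2$ or $i'=i-2$. So all $T$-neighbours of $y$ lie in $T_i\cup T_{i+2}$ for a single $i$. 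Finally, for the $D$-neighbours: if $N_Y(y)=\emptyset$, a $D$-neighbour $z\in D_j$ of $y$ must have $j$ close to the arc $[i,i+2]$ (same $P_t$-walk argument along $C$), leaving $j\in\{i-2,i,i+2,i+4\}$, which is case~(a); and one checks that $y$ cannot simultaneously have $D$-neighbours in two different such $D_j$'s unless $t=9$, because then the arc of $C$ not covered by the union of $C$-neighbourhoods of $y$'s neighbours is short enough to be killed directly — this is exactly the exceptional configuration $N(y)\subseteq D_i\cup T_i\cup T_{i+2}\cup D_{i+4}$ of case~(b), where the four relevant $C$-indices $i,i+2,i+4,i+6(=i-2)$ already cover $C$ when $t-2=7$. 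If $N_Y(y)\ne\emptyset$, Claim~\ref{DY...} kills all $D$-neighbours, so $N(y)\setminus Y\subseteq T_i\cup T_{i+2}$, again a sub-case of~(a) with $j$ vacuous.

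The main obstacle I expect is the careful bookkeeping in the $P_t$-extension arguments: one has to verify that the path obtained by concatenating a length-$(t-5)$ induced sub-path of $C$ with the short gadget through $y$ is genuinely \emph{induced}, which requires checking non-adjacency between $y$ (and its one or two chosen neighbours) and every vertex of the chosen arc of $C$ — this uses triangle-freeness, the definition of $D_i,T_i$, and the placement of $y$ at distance exactly~$2$ from $C$. The second delicate point is pinning down \emph{why} $t=9$ is the only value of $t$ for which two distinct $D_j$'s can appear: this is a pure arithmetic statement about how many indices of the form $i-2,i,i+2,i+4,i+6,\dots$ (spaced by $2$) fit around a cycle of odd length $t-2$ before wrapping around and being forced to cover everything — and $t-2=7$ is precisely the borderline. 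Everything else is a routine, if somewhat tedious, case analysis.
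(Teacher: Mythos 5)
Your overall decomposition (restrict $T$-neighbours to a window $T_i\cup T_{i+2}$, then control $D$-neighbours, handle the $N_Y(y)\neq\emptyset$ case via Claim~\ref{DY...}) matches the paper's in spirit, and the short-odd-cycle / $P_t$-extension bookkeeping for the $T$-part, while left sketchy, is salvageable. However, there is a genuine gap in how you explain and prove the ``$t=9$'' alternative in~(b), and it is not a matter of tidying up arithmetic.

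If $y$ has neighbours $d_i\in D_i$ and $d_{i+4}\in D_{i+4}$, the cycle $y,d_i,c_i,c_{i+1},c_{i+2},c_{i+3},c_{i+4},d_{i+4},y$ is an \emph{induced $C_8$} (a chord would create a triangle or contradict $d_i\in D_i$, $d_{i+4}\in D_{i+4}$). A $C_8$ is even, so it is \emph{not} forbidden by the odd-girth hypothesis ($\Codd$-freeness), nor by $P_t$-freeness; and the complementary cycle $y,d_i,c_i,c_{i-1},\dots,c_{i+4},d_{i+4},y$ has exactly $t-2$ vertices, which is again permitted. So ``the arc not covered is short enough to be killed directly'' is false — for any $t>9$, the configuration is perfectly consistent with $(P_t,\Codd)$-freeness. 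The reason the claim may nevertheless assert ``$t=9$'' is the hypothesis of the section: the claim is stated for $G\in\mathcal G^*$ with $G$ additionally $C_t$-free, which by the definition of $\mathcal G^*$ forces either $t=9$ or $G$ to be $C_8$-free. For $t>9$ this extra $C_8$-freeness rules out the configuration; for $t=9$ it does not, hence alternative~(b). (Indeed, the companion Claim~\ref{newlem:y_i^*} in Section~\ref{sec:C8}, proved without any $C_8$-freeness, restates~(b) \emph{without} the ``$t=9$'' restriction — confirming that the restriction is not an arithmetic consequence.) Your proposal never invokes $C_8$-freeness or the definition of $\mathcal G^*$, and your ``index-wrapping borderline'' explanation (indices $i,i+2,i+4,i-2$ ``covering'' $C_7$) does not hold: those are only four of the seven indices, and no covering phenomenon occurs. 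This step needs to be replaced by the $C_8$ argument; without it the proof does not go through for $t>9$.

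A secondary, smaller point: the paper derives the $D$- and $T$-window constraints primarily from the forbidden \emph{short odd cycles} (using that one of the two $y$--$d_i$--$C$--$d_j$--$y$ cycles has odd length $\le t-4$ unless the indices are close), rather than by building long induced paths. Your $P_t$-walk arguments can be made to work for some sub-cases, but you should be aware that the cleanest route is through the odd-girth condition; in particular you would also want parity bookkeeping to determine which of the two cycles through $y$ is odd.
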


\begin{proof}
Because of the forbidden cycles (and observing that if $t=9$ then $\ell-4=\ell+3$), we know that if $y\in N(D_\ell)$, then 
 all neighbours of $y$ in $D$  lie in $D_\ell\cup D_{\ell +4}$ or in $D_{\ell-4}\cup D_{\ell}$, and
 all neighbours of $y$ in $T$  lie in $T_{\ell-4}\cup T_{\ell-2}\cup  T_\ell\cup T_{\ell +2}$.
Moreover, if $y$ has a neighbour in $T_h$, then $N(y)\cap T$ is contained either  in
$T_h\cup T_{h+2}$ or  in $T_{h-2}\cup T_{h}$. This gives (a), unless
 $y$ has neighbours in both $D_i$ and in $D_{i+4}$. However, in that case,  the neighbours of $y$ in $T$ must belong to $T_{i}\cup T_{i+2}$, and hence, there is an induced $C_8$ going through $y$, which is only allowed if $t=9$. Putting these observations together, and using Claim~\ref{DY...} for (b), the statement follows.
\end{proof}

Let $Y'_i$ be the set of all $y\in Y$ which have neighbours in both $D_i$ and $D_{i+4}$. (By Claim~\ref{lem:y_i^*}, this set is empty if $t>9$.) Observe that if there are independent edges $xy$, $x'y'$ such that $y, y'\in Y'_i$, $x\in D_i$, $x'\in D_{i+4}$, then $y, x, c_{i}, c_{i+1}, ..., c_{i+4}, x', y'$ is an induced path of length $t$ (i.e.~of length $9$), which is impossible. Therefore, for any two vertices $y,y'\in Y'_i$, we have that either $N(y)\cap D_i\subseteq N(y')\cap D_i$ or $N(y')\cap D_{i+4}\subseteq N(y)\cap D_{i+4}$ (or both). Choosing two vertices $y_i, y'_i$ in each set $Y'_i$ such that $N(y)\cap D_i$ and $N(y')\cap D_{i+4}$ are inclusion-minimal,  then choosing a neighbour $q_i$ of $y_i$ in $D_i$, and a neighbour $q'_i$ of $y'_i$ in $D_{i+4}$, and letting $Q:=\bigcup_{i\in [t-3]}\{q_i, q_i'\}$,
we obtain the following claim.
\begin{claim}\label{d=3}
There is a set $Q$ of size at most $14$ such that every vertex in $\bigcup_{i\in[t-3]}Y'_i$ has a neighbour in $Q$.
\end{claim}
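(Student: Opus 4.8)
The claim asserts the existence of a set $Q$ of size at most $14$ such that every vertex in $\bigcup_{i\in[t-3]}Y'_i$ has a neighbour in $Q$. Since Claim~\ref{lem:y_i^*} forces $Y'_i=\emptyset$ whenever $t>9$, the only interesting case is $t=9$, where the index set $[t-3]=[6]=\{0,1,\dots,6\}$ has seven elements; producing two vertices $q_i,q'_i$ per index would give at most $14$ vertices in total, which is exactly the bound claimed.

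The plan is to carry out, for each fixed $i\in[t-3]$, the selection procedure already sketched in the paragraph preceding the claim, and then take $Q$ to be the union over all $i$. First I would record the structural fact that for any two $y,y'\in Y'_i$, either $N(y)\cap D_i\subseteq N(y')\cap D_i$ or $N(y')\cap D_{i+4}\subseteq N(y)\cap D_{i+4}$: this is obtained by supposing both inclusions fail, picking $x\in (N(y)\cap D_i)\setminus N(y')$ and $x'\in (N(y')\cap D_{i+4})\setminus N(y)$, and observing that $y,x,c_i,c_{i+1},c_{i+2},c_{i+3},c_{i+4},x',y'$ is an induced path on $9$ vertices — here one must check non-adjacencies carefully: $y\not\sim x'$ and $y'\not\sim x$ by choice, $y,y'\notin N(C)$ so they have no edges to the $c_j$, vertices of $D_i$ (resp.\ $D_{i+4}$) touch $C$ only at $c_i$ (resp.\ $c_{i+4}$), $x\not\sim x'$ since otherwise $x,c_i,c_{i+1},\dots,c_{i+4},x'$ together with the edge $xx'$ would close a short cycle (indeed $xx'c_{i+4}c_{i+3}c_{i+2}c_{i+1}c_i x$ is a cycle; one argues using triangle-freeness and the forbidden short odd cycles, or simply notes $x,x'$ both lie in $D$ and an edge between $D_i$ and $D_{i+4}$ with the path along $C$ produces a forbidden configuration), and finally $y\not\sim y'$ unless they have the same neighbours in $N(C)$ by Claim~\ref{Ybipsamenbhds}, in which case both inclusions would in fact hold and there is nothing to prove.

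Next, using this dichotomy, I would choose $y_i\in Y'_i$ with $N(y_i)\cap D_i$ inclusion-minimal among $\{N(y)\cap D_i: y\in Y'_i\}$, and $y'_i\in Y'_i$ with $N(y'_i)\cap D_{i+4}$ inclusion-minimal among $\{N(y)\cap D_{i+4}: y\in Y'_i\}$ (if $Y'_i=\emptyset$ there is nothing to do for this $i$). Fix any $q_i\in N(y_i)\cap D_i$ and any $q'_i\in N(y'_i)\cap D_{i+4}$, both nonempty by definition of $Y'_i$. I then claim every $y\in Y'_i$ is adjacent to $q_i$ or to $q'_i$. Indeed, applying the dichotomy to the pair $(y,y_i)$ gives either $N(y)\cap D_i\subseteq N(y_i)\cap D_i$, which by minimality of $N(y_i)\cap D_i$ forces equality and hence $q_i\in N(y)$; or $N(y_i)\cap D_{i+4}\subseteq N(y)\cap D_{i+4}$. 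In the latter case I apply the dichotomy to $(y,y'_i)$: if $N(y'_i)\cap D_{i+4}\subseteq N(y)\cap D_{i+4}$ then by minimality of $N(y'_i)\cap D_{i+4}$ we get equality and $q'_i\in N(y)$; the remaining possibility $N(y)\cap D_i\subseteq N(y'_i)\cap D_i$ can be fed back through $y_i$, and chasing the inclusions (together with the fact that all these sets are nonempty) forces one of the two desired adjacencies. Setting $Q:=\bigcup_{i\in[t-3]}\{q_i,q'_i\}$, since $|[t-3]|=7$ when $t=9$ and $Y'_i=\emptyset$ otherwise, we get $|Q|\le 14$ and every vertex of $\bigcup_i Y'_i$ has a neighbour in $Q$.

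The main obstacle I anticipate is the bookkeeping in the dichotomy lemma — verifying that the $9$-vertex path is genuinely induced requires checking every non-edge, and the non-adjacency $x\not\sim x'$ between $D_i$ and $D_{i+4}$ has to be extracted from the forbidden short odd cycles and triangle-freeness rather than assumed. A secondary subtlety is handling the degenerate sub-case where two vertices of $Y'_i$ have identical neighbourhoods in $N(C)$: Claim~\ref{Ybipsamenbhds} guarantees that vertices in the same bipartition class of a component of $G[Y]$ behave identically, so such pairs cause no trouble, but one should state this explicitly so the inclusion-minimal choices are well defined. Everything else is a routine case analysis once the dichotomy is in hand.
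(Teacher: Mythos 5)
Your overall route is the same as the paper's: establish the dichotomy (for any $y,y'\in Y'_i$, either $N(y)\cap D_i\subseteq N(y')\cap D_i$ or $N(y')\cap D_{i+4}\subseteq N(y)\cap D_{i+4}$), pick $y_i,y'_i$ with inclusion-minimal traces on $D_i$ and $D_{i+4}$ respectively, and take $q_i,q'_i$ from those neighbourhoods; the reduction to $t=9$ via emptiness of $Y'_i$ for $t>9$ and the count $|[t-3]|=7$ giving $|Q|\le 14$ are also as in the paper. However, the two non-adjacency checks that you single out as the ``main obstacle'' are precisely where your argument goes wrong.

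First, for $y\not\sim y'$ you appeal to Claim~\ref{Ybipsamenbhds}, but that claim only says vertices in the \emph{same} bipartition class of a component of $G[Y]$ have identical neighbourhoods in $N(C)$; if $y\sim y'$ they lie in \emph{opposite} classes, so the claim tells you nothing, and the ``in which case both inclusions would hold'' escape does not apply. The correct (and simpler) argument is Claim~\ref{DY...}: $y\in Y'_i$ has a neighbour in $D$, hence no neighbour in $Y$, so $y\not\sim y'$ is automatic. Second, and more seriously, your justification of $x\not\sim x'$ is false. The cycle $x,x',c_{i+4},c_{i+3},\dots,c_i,x$ that you exhibit is a $C_7$; when $t=9$ the forbidden odd cycles are only those of length at most $t-4=5$, and $C_7$ has length $t-2$ --- it is the same length as the base cycle $C$ itself and is certainly allowed. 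Your fallback (``an edge between $D_i$ and $D_{i+4}$ with the path along $C$ produces a forbidden configuration'') is also false: such an edge creates only a $C_6$ one way around $C$ and a $C_7$ the other way, neither forbidden. The correct way to rule out $x\sim x'$ uses that $y\in Y'_i$ has some neighbour $z\in D_{i+4}$, and $z\neq x'$ since $x'\notin N(y)$; if $x\sim x'$ then $y,x,x',c_{i+4},z,y$ is an induced $C_5$ ($y\not\sim x'$ by choice, $y\not\sim c_{i+4}$ since $y\in Y$, $x\not\sim c_{i+4}$ since $x\in D_i$, $x\not\sim z$ by triangle-freeness as both lie in $N(y)$, $x'\not\sim z$ since $D_{i+4}$ is stable), which is forbidden. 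With these two fixes in place, the rest of your argument (including the inclusion-chasing at the end, which is sketched rather loosely but does go through) is sound and recovers the paper's proof.
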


Let us now analyse possible edges inside $Y$.

\begin{claim}\label{lem:yy'}
For any $i\in[t-3]$, if $yy'\in E(G[Y])$ and $y\in N(T_i)$, then
\[N(y')\setminus Y\subseteq \bigcup_{j\in\{i-3, i-1, i+1, i+3\}}( T_{j}\cup T_{j+2}).\]
\end{claim}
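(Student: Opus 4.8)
\textbf{Proof plan for Claim~\ref{lem:yy'}.}
The plan is to exploit the fact that $G$ is $(P_t,\mathcal{C}^{odd}_{\leq t-4})$-free together with the location of the neighbours of $y$ on $C$. Since $y\in N(T_i)$, there is a vertex $x\in T_i$ with $xy\in E(G)$; recall $x$ is adjacent to exactly $c_i$ and $c_{i+2}$ on $C$. Because $yy'$ is an edge of $G[Y]$ and $G$ is triangle-free, $y'$ is not adjacent to $x$. First I would argue that $N(y')\setminus Y\subseteq N(C)$: this is immediate from Claim~\ref{lem:structure}(a), which says $V(G)=V(C)\cup N(C)\cup Y$, and from the fact that $y'\in Y$ has no neighbour on $C$, so every non-$Y$ neighbour of $y'$ lies in $N(C)=D\cup T$.

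Next I would rule out neighbours of $y'$ in $D$ and pin down which $T_j$'s can meet $N(y')$. For the $D$ part, note that by Claim~\ref{DY...} no vertex of $Y$ adjacent to another vertex of $Y$ can have a neighbour in $D$; applying this to $y$ (which is adjacent to $y'\in Y$) we already know $y$ has no neighbour in $D$, and applying it to $y'$ (adjacent to $y\in Y$) gives $N(y')\cap D=\emptyset$. So it remains to control $N(y')\cap T$. Suppose $y'$ has a neighbour $x'\in T_j$, so $x'$ is adjacent to $c_j$ and $c_{j+2}$. Consider the walk $c_{i+2},x,y,y',x',c_j$ (or $c_{i+2},x,y,y',x',c_{j+2}$, etc.) closed up along the shorter arc of $C$ between the relevant endpoints. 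The key computation is a parity/length check: if $j$ is at the wrong residue modulo $2$ relative to $i$, or if $j$ is too close to $i$ (within distance $2$ on the cycle, roughly), one of these closed walks, after deleting chords, yields an induced odd cycle of length at most $t-4$, or an induced $P_t$ (using the long arc of $C$ plus the path $x,y,y',x'$), both forbidden. Carrying this out carefully should force $j\in\{i-3,i-1,i+1,i+3\}$, which is exactly the claim once one also observes that $N(y')\cap T$, being the neighbourhood of a single vertex, sits inside $T_j\cup T_{j+2}$ for a single choice of $j$ by the same argument used in the proof of Claim~\ref{lem:y_i^*} (``if $y'$ has a neighbour in $T_h$, then $N(y')\cap T\subseteq T_h\cup T_{h+2}$ or $T_{h-2}\cup T_h$''), and one then takes the union over the admissible starting indices.

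I expect the main obstacle to be the bookkeeping of the case analysis on the index $j$: one must handle separately the subcases according to whether the short arc of $C$ between $\{c_i,c_{i+2}\}$ and $\{c_j,c_{j+2}\}$ is even or odd, and whether it is short enough that the resulting cycle is induced (no chords of $C$ exist since $C$ is induced, but $x$ or $x'$ might have extra neighbours on $C$ — however, since $x\in T_i$ and $x'\in T_j$, their neighbourhoods on $C$ are exactly $\{c_i,c_{i+2}\}$ and $\{c_j,c_{j+2}\}$, which keeps things clean). One also has to be slightly careful that $x\neq x'$ and that the path $x,y,y',x'$ is induced: $xy$ and $y'x'$ are edges, $yy'$ is an edge, $x$ is not adjacent to $y'$ and $x'$ is not adjacent to $y$ by triangle-freeness applied to the edge $yy'$, and $x\neq x'$ because $x\in T_i$, $x'\in T_j$ with $i\neq j$ (the case $i=j$ being trivially covered by the claim's index set). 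Beyond this, the argument is a routine application of the forbidden-subgraph hypotheses, entirely in the spirit of the proofs of Claims~\ref{DY...} and~\ref{lem:y_i^*}.
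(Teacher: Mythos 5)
Your plan is correct and follows the same route as the paper, whose entire proof of this claim is the one sentence ``This follows from the fact that $G$ has no induced odd cycles of length up to $t-4$'': you form the induced path $x,y,y',x'$ with $x\in T_i$, $x'\in T_h$, close it up with an arc of $C$, and observe that the bad values of $h$ produce a short induced odd cycle. Two cosmetic remarks: the appeal to $P_t$-freeness and to Claim~\ref{lem:y_i^*} is unnecessary (the forbidden odd cycles alone suffice, matching the paper), and the phrase ``after deleting chords'' should be replaced by directly verifying the cycle is chordless in the critical cases (which it is, since $x$ and $x'$ have prescribed neighbourhoods on $C$ and $xx'\notin E$ by a short triangle/odd-cycle check).
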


\begin{proof}
This follows  from the fact that $G$ has no induced odd cycles of length up to $t-4$.
\end{proof}

Claims~\ref{DY...} and~\ref{lem:yy'} immediately imply the following.
\begin{claim}\label{cor:y}
If $y, y'\in Y$ are adjacent, then 
\begin{enumerate}[(i)]
\item there is a vertex $c$ on $C$ such that $N(y)\subseteq N(c)$; and
\item  there are two consecutive vertices $c,c'$ on $C$ such that $z\in N(N(y))$ and $z'\in N(N(y'))$.
\end{enumerate}
\end{claim}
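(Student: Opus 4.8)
The plan is to read (i) as $N(y)\setminus Y\subseteq N(c)$ for a suitable $c\in V(C)$, and (ii) as the statement that there are consecutive vertices $c,c'$ on $C$ with $c\in N(N(y))$ and $c'\in N(N(y'))$; both will drop out of Claims~\ref{DY...}, \ref{lem:y_i^*} and~\ref{lem:yy'}, together with the trivial observation that every vertex of $T_k$ is adjacent to both $c_k$ and $c_{k+2}$. The common starting point is this: since $y\sim y'$ with $y,y'\in Y$, Claim~\ref{DY...} tells us that neither $y$ nor $y'$ has a neighbour in $D$; and since each of $y,y'$ lies at distance exactly $2$ from $C$, it has a neighbour in $N(C)=D\cup T$ (Claim~\ref{lem:structure}), hence in $T$.

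For part (i), I would apply Claim~\ref{lem:y_i^*} to $y$. Its case (b) cannot occur here, since it would place $y'\in N(y)$ inside $D\cup T$, contradicting $y'\in Y$; so case (a) yields an index $i$ (and some $j$) with $N(y)\setminus Y\subseteq D_j\cup T_i\cup T_{i+2}$. Because $y$ has no neighbour in $D$, this sharpens to $N(y)\setminus Y\subseteq T_i\cup T_{i+2}\subseteq N(c_{i+2})$, so $c:=c_{i+2}$ does the job.

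For part (ii), fix indices $i,i'$ with $y\in N(T_i)$ and $y'\in N(T_{i'})$ (which exist by the first paragraph). The key is to apply Claim~\ref{lem:yy'} to the edge $yy'$ \emph{in both directions}. Using it with $y\in N(T_i)$ confines every $T$-neighbour of $y'$, in particular its neighbour in $T_{i'}$, to $\bigcup_{j\in\{i-3,i-1,i+1,i+3\}}(T_j\cup T_{j+2})=T_{i-3}\cup T_{i-1}\cup T_{i+1}\cup T_{i+3}\cup T_{i+5}$, so $i'\in\{i-3,i-1,i+1,i+3,i+5\}$; using it with the roles of $y,y'$ swapped gives, symmetrically, $i'\in\{i-5,i-3,i-1,i+1,i+3\}$. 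Intersecting, $i'\in\{i-3,i-1,i+1,i+3\}$. Now the neighbour of $y$ in $T_i$ is adjacent to $c_i$ and $c_{i+2}$, so $c_i,c_{i+2}\in N(N(y))$, and likewise $c_{i'},c_{i'+2}\in N(N(y'))$. A short case analysis over the four values of $i'$ now produces a consecutive pair with one vertex in $N(N(y))$ and the other in $N(N(y'))$: take $(c,c')=(c_i,c_{i+1})$ if $i'=i+1$; $(c,c')=(c_i,c_{i-1})$ if $i'\in\{i-1,i-3\}$ (using $c_{i-1}=c_{i'+2}$ when $i'=i-3$); and $(c,c')=(c_{i+2},c_{i+3})$ if $i'=i+3$. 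This establishes (ii).

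The only step that is more than routine bookkeeping is the symmetric double use of Claim~\ref{lem:yy'} in part (ii): a single application only locates $i'$ within distance $5$ of $i$, which is too loose to force \emph{consecutive} cycle vertices, whereas applying the claim from both ends narrows the gap to $3$ -- exactly tight, given that from $y$ we see the cycle vertices $c_i,c_{i+2}$ and from $y'$ we see $c_{i'},c_{i'+2}$. I would also recheck the index arithmetic modulo $t-2$ in the smallest case $t=9$, where $\ell=7$ and so $i\pm5\equiv i\mp2$, to confirm the intersection step still yields $\{i-3,i-1,i+1,i+3\}$ there.
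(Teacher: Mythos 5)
Your proof is correct and fills in exactly the bookkeeping the paper leaves implicit: the paper's one-line justification cites Claims~\ref{DY...} and~\ref{lem:yy'}, and you correctly supply the missing ingredient that Claim~\ref{lem:y_i^*}(a) (with case~(b) excluded because $y'\in N(y)\cap Y$) is what localises $N(y)\setminus Y$ to $T_i\cup T_{i+2}\subseteq N(c_{i+2})$, giving part~(i). Your symmetric double application of Claim~\ref{lem:yy'} for part~(ii), including the check that the index arithmetic mod $t-2$ still gives $i'\in\{i-3,i-1,i+1,i+3\}$ when $t=9$, matches the intended argument.
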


We now define a set $W$ of certain vertices of $Y$.

\begin{defn}\label{defiWC2d+1}
Let $W\subseteq Y\cap N(T)$ be the set of all vertices $y\in Y$ for which there is an $i\in [t-3]$ such that  $y\in N(T_i)$ and one of the following holds:
\begin{enumerate}[(i)]
    \item  $y \in N(D_{i-2}\cup D_{i+4})$; or
    \item $y$ has a neighbour in $Y\cap  N(T_{i-3}\cup T_{i+3})$.
\end{enumerate}
\end{defn}

Observe that by Claim~\ref{DY...}, the vertex set of any component of $G[Y]$ is either contained in $W$ or disjoint from $W$.

\begin{claim}\label{lem:completeYW}
Let $y, y'\in Y\setminus W$ with $yy'\in E(G)$. Then for any $z\in N(y)\setminus Y$ and $z'\in N(y')\setminus Y$, we have that $zz'\in E(G)$.
\end{claim}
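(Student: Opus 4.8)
The statement to prove is Claim~\ref{lem:completeYW}: for adjacent $y,y'\in Y\setminus W$, every neighbour $z$ of $y$ outside $Y$ is adjacent to every neighbour $z'$ of $y'$ outside $Y$. The natural approach is by contradiction: assume $z\in N(y)\setminus Y$ and $z'\in N(y')\setminus Y$ with $zz'\notin E(G)$, and build a forbidden induced subgraph---either an induced $P_t$ or a short odd induced cycle. Since $z,z'\in N(C)$, they live in the sets $D$ or $T$ classified by Claim~\ref{lem:structure}(a), so we know their precise adjacency patterns on $C$. The key data are the indices $i,i',j,j'$ with (roughly) $z\in D_j\cup T_i\cup T_{i+2}$ and $z'\in D_{j'}\cup T_{i'}\cup T_{i'+2}$, and the fact that $y\notin W$ constrains how far apart these indices can be.

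\textbf{Key steps.} First I would record the location constraints: since $y\in Y\setminus W$ and $y$ has a non-$Y$ neighbour, Claim~\ref{lem:y_i^*}(a) gives an index $i$ with $N(y)\setminus Y\subseteq D_j\cup T_i\cup T_{i+2}$ for $j\in\{i-2,i,i+2,i+4\}$ (case (b) is excluded: if $y$ had neighbours in both $D_i$ and $D_{i+4}$ then $y\in Y'_i\subseteq W$ via Definition~\ref{defiWC2d+1}(i), wait---actually membership in $W$ requires $y\in N(T_i)$; I'd handle the pure-$D$ case of Claim~\ref{lem:y_i^*}(b) separately using Claim~\ref{DY...}, since then $y$ has no neighbour in $Y$ at all, contradicting $yy'\in E(G)$). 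Similarly get an index $i'$ for $y'$. Second, since $yy'\in E(G[Y])$ and (say) $y\in N(T_i)$, Claim~\ref{lem:yy'} forces $N(y')\setminus Y$ into $\bigcup_{h\in\{i-3,i-1,i+1,i+3\}}(T_h\cup T_{h+2})$; combined with Claim~\ref{lem:y_i^*} for $y'$ this pins $i'\in\{i-3,i-1,i+1,i+3\}$. Third, the hypothesis $y,y'\notin W$ kills the extreme cases: if $i'\in\{i-3,i+3\}$ then $y\in N(T_i)$ has a $Y$-neighbour $y'\in N(T_{i-3}\cup T_{i+3})$, so $y\in W$ by Definition~\ref{defiWC2d+1}(ii) --- contradiction. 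So $i'\in\{i-1,i+1\}$; by symmetry say $i'=i+1$. Likewise $y\notin W$ via clause (i) rules out $z\in D_{i-2}\cup D_{i+4}$, so $z\in D_i\cup D_{i+2}\cup T_i\cup T_{i+2}$, and symmetrically $z'\in D_{i+1}\cup D_{i+3}\cup T_{i+1}\cup T_{i+3}$. Fourth, with $z,z'$ now confined to a short window of at most $6$ consecutive cycle-indices and $zz'\notin E(G)$, I would explicitly exhibit the forbidden configuration: walk from $z$ through its leftmost neighbour on $C$ around the long arc of $C$ to $z'$'s rightmost neighbour, then out to $z',y',y,z$; because $G$ is triangle-free (so $z\ne z'$, $y\notin\{z,z'\}$, etc.) and the chords are controlled by the $D/T$ membership, this path/cycle is induced. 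Checking its length: the long arc of $C$ contributes about $t-2-O(1)$ vertices and the attached stubs add a constant, and adjusting which endpoints one uses yields either an induced $P_t$ (contradicting $P_t$-freeness) or an odd induced cycle of length in $\{t-2,t\}$ through $y$ or $y'$; the odd-cycle case is where one uses that $C$ is a \emph{longest} odd cycle and that $yy'$ is an edge, so one can reroute to get something longer than $t$, or land in the forbidden odd-girth range.

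\textbf{Main obstacle.} The delicate part is the last step: the precise bookkeeping of which pair of cycle-endpoints to pick for $z$ and $z'$ so that the resulting path is genuinely induced (no unexpected chord from $z$ or $z'$ to an interior cycle vertex, and $y,y'$ not adjacent to the cycle at all by definition of $Y$) \emph{and} has exactly the right length and parity to be forbidden. Since $z\in D_j$ or $T_i/T_{i+2}$ the possible chords are exactly to $c_j$ or to $c_i,c_{i+2}$, so by routing the long arc to avoid re-entering the window $\{i-2,\dots,i+3\}$ one gets inducedness; the parity adjustment---trading an even-length detour for an odd one by including or excluding $y$ and $y'$ and using that $yy'$ is an edge---is the one genuinely fiddly computation. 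I expect roughly a two-or-three-way case split on whether $z,z'$ are in $D$-sets or $T$-sets, in each case producing the explicit forbidden induced $P_t$ or odd cycle, using only $(P_t,\Codd)$-freeness and the maximality of $C$.
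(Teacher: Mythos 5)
Your plan is essentially the paper's: rule out $D$-neighbours via Claim~\ref{DY...}, locate $z$ and $z'$ in nearby $T$-sets, use $y,y'\notin W$ to force the two $T$-indices to be adjacent, and then exhibit a forbidden induced cycle through $z,y,y',z'$ and a long arc of $C$. The paper does exactly this, landing on $z\in T_i$, $z'\in T_{i+1}$ and the induced $C_t$ given by $z, y, y', z', c_{i+3}, c_{i+4}, \ldots, c_{i-1}, c_i, z$.

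However, there is a genuine gap in how you pin down the two $T$-indices. You apply the exclusion coming from Definition~\ref{defiWC2d+1}(ii) to the Claim~\ref{lem:y_i^*} indices $i$ and $i'$ (concluding $i'\in\{i-1,i+1\}$), but the actual $T$-indices of $z$ and $z'$ are only constrained to $z\in T_i\cup T_{i+2}$ and $z'\in T_{i'}\cup T_{i'+2}=T_{i+1}\cup T_{i+3}$. The combination $z\in T_i$, $z'\in T_{i+3}$ is not ruled out by your argument, and in that case the cycle you would build, $z,y,y',z',c_{i+5},\ldots,c_{i-1},c_i,z$, has length exactly $t-2$: an odd cycle of the \emph{same} length as $C$, hence neither longer than a longest odd cycle nor in the forbidden odd-girth range $\le t-4$, nor an induced $C_t$. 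Your suggested fixes for the odd-cycle branch --- "reroute to get something longer than $t$, or land in the forbidden odd-girth range" --- do not apply here, and I do not see a rerouting that would produce a forbidden subgraph from this configuration. The correct resolution is that this configuration simply cannot occur: if $z\in T_i$ and $z'\in T_{i+3}$, then $y\in N(T_i)$ has a $Y$-neighbour $y'\in N(T_{i+3})$, which puts $y\in W$ by Definition~\ref{defiWC2d+1}(ii) applied with $T$-index $i$ and offset $+3$; equivalently, apply Claim~\ref{lem:yy'} symmetrically in both directions to force the two $T$-indices of $z$ and $z'$ (not the Claim~\ref{lem:y_i^*} indices) to differ by exactly one. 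Once you do that, only the length-$t$ cycle arises and the contradiction is clean. A second, smaller issue: after invoking Claim~\ref{DY...} you should conclude outright that $z,z'\in T$ (since both $y$ and $y'$ have $Y$-neighbours), rather than carrying the cases $z\in D_i\cup D_{i+2}$ and $z'\in D_{i+1}\cup D_{i+3}$ into the final step.
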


\begin{proof}
Suppose otherwise. By Claim~\ref{lem:yy'} there is an index $i\in [t-3]$ such that $z\in T_i$ and $z'\in T_{i+1}$ (after possibly changing the roles of $z$ and $z'$). Then
$z, y, y', z', c_{i+3}, c_{i+4},  ... , c_{i-1}, c_i, z$
is an induced cycle of length $t$, a contradiction.
\end{proof}

It turns out that  $W$  is dominated by a set $X$ of bounded size. 

\begin{claim}\label{lem:colourW2d+1}
There exist a set $R \subseteq N(C)$ such that $W \subseteq N(R)$ and $|R|\le 2t-4$.
\end{claim}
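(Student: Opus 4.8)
\textbf{Proof plan for Claim~\ref{lem:colourW2d+1}.}
The plan is to cover $W$ by constantly many ``service'' vertices on or near~$C$, handling separately the two reasons (i) and (ii) that can put a vertex into~$W$. For each $i\in[t-3]$ fix a reference vertex $t_i\in T_i$ whenever $T_i\neq\emptyset$ (so $t_i$ is adjacent to $c_i$ and $c_{i+2}$); this already gives $t-2$ vertices. A vertex $y\in W$ with $y\in N(T_i)$ of type (i) has a neighbour in $D_{i-2}\cup D_{i+4}$, so it belongs to $N(T_i)\cap N(D_{i-2}\cup D_{i+4})$; I want to show each such set is dominated by a bounded number of vertices of~$C$ or their neighbours. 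The key observation here is the same path/cycle-avoidance argument used for Claim~\ref{d=3}: if two such vertices $y,y'$ had ``incomparable'' neighbourhoods on $D_{i-2}$ and $D_{i+4}$ respectively, one could trace an induced $P_t$ (or $C_t$, which is forbidden in this subsection) through $y$, a vertex of $D_{i-2}$, the arc $c_{i-2}\ldots c_{i+4}$ of $C$, a vertex of $D_{i+4}$, and~$y'$; the length works out to $t$ because $|\{c_{i-2},\ldots,c_{i+4}\}|=7$ together with the four outside vertices $y,\cdot,\cdot,y'$. Hence along each ``side'' the relevant neighbourhoods in $D_{i-2}$ (resp.\ $D_{i+4}$) form a chain, and picking two inclusion-minimal witnesses per index~$i$, together with one chosen neighbour of each witness in the appropriate $D_j$, gives at most $4(t-2)$ vertices that dominate all type-(i) vertices of $W$.

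For the type-(ii) vertices, I would argue that a $y\in N(T_i)$ with a neighbour $y'\in Y\cap N(T_{i-3}\cup T_{i+3})$ is already dominated by a vertex of~$C$. Indeed, by Claim~\ref{cor:y}(i), since $yy'\in E(G[Y])$, there is a vertex $c$ on $C$ with $N(y)\subseteq N(c)$; the point is then to bound how many distinct such $c$ are needed, i.e.\ to show that the index of $c$ is determined up to a bounded set by the index $i$. This follows from Claim~\ref{lem:yy'}: once $y\in N(T_i)$, the set $N(y)\setminus Y$ is confined to $\bigcup_{j\in\{i-3,i-1,i+1,i+3\}}(T_j\cup T_{j+2})$, and also (by Claim~\ref{lem:y_i^*}(a), using $C_t$-freeness so that case (b) cannot occur) $N(y)\cap D$ lies in a single $D_{j'}$ with $j'$ in a bounded window around~$i$. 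Therefore $N(y)$ is contained in the neighbourhood of one of a bounded set of vertices $c_{i-5},\ldots,c_{i+7}$ say; taking all of $V(C)$ into $R$ costs only $t-2$ more vertices and certainly suffices. Combining: $R$ consists of $V(C)$ (for type (ii) and as a safety net), the chosen $t_i$'s, and the at most $4(t-2)$ witnesses/neighbours for type (i); a mildly more careful bookkeeping, reusing $V(C)$ and noting that the $t_i$ vertices are not actually needed once we have all of $V(C)$, brings the total to at most $2t-4$ as claimed.

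The main obstacle I expect is the constant: getting down to exactly $2t-4$ rather than, say, $6t$, requires being economical about which sets one puts into~$R$. The natural clean decomposition is $R = V(C) \cup (\text{the type-(i) witnesses})$, so one needs the type-(i) part to have size at most $t-2$. That means one cannot afford two minimal witnesses plus two of their neighbours per index~$i$; instead I would charge type-(i) vertices to the $D$-sets they hit. A vertex $y$ of type (i) at index $i$ has a neighbour in $D_{i-2}$ or in $D_{i+4}$, i.e.\ in $D_j$ for some $j$, and crucially (again by an induced-$P_t$/$C_t$ argument) all vertices of $Y$ that have a neighbour in a \emph{fixed} $D_j$ and also a neighbour in $T$ are dominated by just two vertices of $D_j$ (the endpoints of the chain of their $D_j$-neighbourhoods). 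Since there are $t-2$ sets $D_j$, this yields $2(t-2)=2t-4$ vertices, and these together with $V(C)$ would actually be $3t-6$; so the final tuning is to observe that one of the two collections is redundant against $V(C)$ — every type-(ii) vertex, and in fact every $y\in W$ with $N(y)\subseteq N(c)$ for some $c\in V(C)$, is already covered by $V(C)$, so $R$ need only be $V(C)$ together with, for each $j$, the \emph{one} extra $D_j$-vertex needed to catch the type-(i) vertices whose $T$-neighbourhood forces $N(y)\not\subseteq N(c)$ for every $c\in C$. I would present the bound by just fixing $R=V(C)\cup\{$two chosen vertices of each $D_j$ that are endpoints of the relevant neighbourhood chains$\}$, remark that the chains exist by the cycle-length argument, and then note $|R|\le (t-2)+(t-2)=2t-4$ after discarding the all-of-$V(C)$-dominated vertices as superfluous.
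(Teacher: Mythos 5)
Your proposal breaks down at the ``type (ii)'' step, and the error is a category confusion about what it means to dominate a vertex of~$W$. The claim requires $W\subseteq N(R)$, i.e.\ every $w\in W$ must have a \emph{neighbour} in $R$. But $W\subseteq Y$, and $Y$ is by definition the set of vertices at distance exactly $2$ from $C$, so no vertex of $W$ is adjacent to any vertex of $C$; hence $W\cap N(V(C))=\emptyset$. Putting $V(C)$ into $R$ as a ``safety net'' therefore catches nothing — it dominates no vertex of $W$ at all. (It also violates the explicit requirement $R\subseteq N(C)$, since $V(C)$ and $N(C)$ are disjoint by the paper's definition of $N_B(A)$.) The invocation of Claim~\ref{cor:y}(i) is the source of the confusion: knowing that some $c\in V(C)$ satisfies $N(y)\subseteq N(c)$ tells you something about $y$'s neighbourhood, not that $y$ itself is adjacent to $c$ or to anything you have chosen to put in $R$. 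So the entire ``type (ii) is free because of $V(C)$'' reduction collapses, and with it the final bookkeeping that was supposed to bring the count down to $2t-4$.

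The paper's actual argument is different in a way that handles both types uniformly. For two vertices $y,z\in W\cap N(T_i)$ it proves the chain property (\ref{prop1i}): either $N_{T_i}(y)$ and $N_{T_i}(z)$ are comparable, or $N(y)\cap D=N(z)\cap D\neq\emptyset$. The proof of this disjunction is exactly where the two ``reasons'' for membership in $W$ come in — case (i) gives a $P_t$ through $y,t_1,c_{i+2},t_2,z,d,c_{i+4},\ldots,c_{i-1}$, and case (ii) gives a $P_t$ through $y,t_1,c_{i+2},t_2,z,z',t_3,c_{i+5},\ldots,c_{i-1}$. Then one picks, per index $i$, a single witness $y$ with inclusion-minimal $N_{T_i}(y)$, puts one vertex $a_i\in N_{T_i}(y)$ and, when relevant, one vertex $b_i\in N(y)\cap D$ into $R$; the disjunction (\ref{prop1i}) guarantees every other $W$-vertex at index $i$ is adjacent to $a_i$ or to $b_i$. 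That gives at most $2(t-2)=2t-4$ vertices, all in $N(C)$, without ever touching $V(C)$. If you want to salvage your plan, you need to replace the ``$V(C)$ covers type (ii)'' step with a genuine domination argument for type-(ii) vertices using vertices of $T\cup D$, and you will likely rediscover something close to (\ref{prop1i}); your idea of charging type-(i) vertices to chains inside the $D_j$ sets is a legitimate alternative for one half of the disjunction, but it still has to be paired with a $T$-side argument, not with $V(C)$.
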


\begin{proof}
Let  $i\in [t-3]$. We claim that for any two vertices $y,z\in W\cap N(T_i)$ we have
\begin{equation} \label{prop1i}
    N_{T_i}(y) \subseteq N_{T_i}(z) \text{, or } N_{T_i}(z) \subseteq N_{T_i}(y)\text{, or }N(y)\cap D=N(z)\cap D\neq \emptyset.
\end{equation}
Then we can take,
for each $i\in [t-3]$ with $T_i\neq\emptyset$, a vertex $y\in W$, with the property that among all such choices, $N_{T_i}(y)$ is inclusion-minimal, and choose an arbitrary vertex $a_i\in N_{T_i}(y)$. If $N(y)\cap D\neq \emptyset$, we also choose a arbitrary vertex $b_i\in N(y)\cap D$. Then by~\eqref{prop1i},  the set $R$ consisting of all $a_i$ and all existing $b_i$ is as desired.

It remains to prove~\eqref{prop1i}. For contradiction suppose \eqref{prop1i} fails for vertices $y,z\in W\cap N(T_i)$. Then  there are  $t_1\in N_{T_i}(y) \setminus N_{T_i}(z)$ and $t_2\in N_{T_i}(z)  \setminus N_{T_i}(y)$.

We distinguish two cases. First assume  $z$ is as in Definition~\ref{defiWC2d+1} \textit{(i)}, that is, $z\in N(D_{i-2}\cup D_{i+4})$. Then there is $d\in N_{D_{i-2}\cup D_{i+4}}(z)$, say $d\in D_{i+4}$ (the other case is symmetric). Since we assume~\eqref{prop1i} does not hold, we may assume that $d\notin N(y)$ (after possibly swapping the roles of $y$ and $z$). Consider the induced path $y, t_1, c_{i+2}, t_2, z, d, c_{i+4}, c_{i+5}, c_{i+6}, ... , c_{i-1}$, which has length $t$,  a contradiction.

Now assume $z$ is as in 
Definition~\ref{defiWC2d+1} \textit{(ii)}. Then $z$ has a a neighbour $z'\in Y\cap N(T_{i-3}\cup T_{i+3})$, say $z'\in Y\cap N(T_{i+3})$. Let $t_3\in N_{T_{i+3}}(z')$.
 Consider the path $y, t_1, c_{i+2}, t_2, z, z', t_3, c_{i+5}, c_{i+6}, c_{i+7}, ... , c_{i-1}.$
As this path is induced ($yt_{3}\notin E(G)$ because of Claim~\ref{lem:y_i^*}) and has length $t$, we again obtain a contradiction to $G$ being $P_{t}$-free. 
This proves~\eqref{prop1i}, and thus concludes the proof of the lemma.
\end{proof}

We now resume the main results of this subsection in the following lemma.

\begin{lemma}\label{propdef:c2d+1}
There is a set $X\subseteq N(C)$ with $|X|\le 2t+10$ such that for every component $K$ of $G[Y]$ one of the following holds.
\begin{enumerate}[(I)]
    \item $V(K)\subseteq N(X)$; 
    \item $K$ is trivial and there is a vertex $c$ on $C$ such that $N(K)\subseteq N(c)$; or
    \item $K$ is bipartite with partition classes $U_1, U_2$, and 
    \begin{enumerate}[(A)]
    \item there are vertices $b_1, b_2\in V(C)$ such that $N(U_j)\subseteq N(b_j)$  for $j=1,2$; \label{cseesall} \item there are $x_j\in N(N(U_j))\cap V(C)$   for $j=1,2$ such that $x_1x_2\in E(G)$;\label{adjacencies}
   and
    \item $N(U_1)\setminus Y$  is complete to $N(U_{2})\setminus Y$.\label{completeneigh}
\end{enumerate}
\end{enumerate}
\end{lemma}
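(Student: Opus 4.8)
The plan is to assemble the set $X$ by merging the bounded-size sets produced by the preceding claims, and then to argue that any component $K$ of $G[Y]$ that fails to fall into case (I) must be of one of the two special shapes (II) or (III). Concretely, I would set
\[
X := R \cup Q \cup \{q_i, q_i' : i\in[t-3]\},
\]
where $R$ is the set from Claim~\ref{lem:colourW2d+1} (of size at most $2t-4$, dominating $W$), $Q$ is the set from Claim~\ref{d=3} (of size at most $14$, hit by every vertex of $\bigcup_i Y_i'$), so that $|X|\le (2t-4)+14 = 2t+10$. By Claim~\ref{DY...} together with the remark after Definition~\ref{defiWC2d+1}, the vertex set of any component $K$ of $G[Y]$ is either contained in $W$ or disjoint from it; in the first case $V(K)\subseteq N(R)\subseteq N(X)$ and we are in case~(I). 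So for the rest of the argument we may assume $V(K)\cap W=\emptyset$, and also (replacing (I) if needed) that $V(K)$ is not hit by $Q$, hence by Claim~\ref{d=3} that $V(K)\cap\bigcup_i Y_i'=\emptyset$; in particular, by Claim~\ref{lem:y_i^*}(a), every $y\in V(K)$ satisfies $N(y)\setminus Y\subseteq D_j\cup T_i\cup T_{i+2}$ for suitable indices.

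Next I would split on whether $K$ is trivial. If $K=\{y\}$ is trivial, then since $y\notin W$ and $y\notin\bigcup_i Y_i'$, Claim~\ref{lem:y_i^*} gives $i$ with $N(y)=N(y)\setminus Y\subseteq D_j\cup T_i\cup T_{i+2}$; all of these vertices are adjacent to $c_i$ (note $D_j$ with $j\in\{i-2,i,i+2,i+4\}$ does \emph{not} in general guarantee adjacency to a single $c$ — here I would instead use that $y\notin W$ rules out $j\in\{i-2,i+4\}$ via Definition~\ref{defiWC2d+1}(i), leaving $j\in\{i,i+2\}$, and $D_i\cup D_{i+2}\cup T_i\cup T_{i+2}\subseteq N(c_i)\cup N(c_{i+2})$; a short case check then pins down a single vertex $c\in V(C)$ with $N(y)\subseteq N(c)$, giving case~(II)). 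If $K$ is nontrivial, then by Claim~\ref{Ybipsamenbhds} $K$ is bipartite with classes $U_1,U_2$ and vertices in the same class have the same neighbourhood in $N(C)$; this immediately yields (III)(\ref{cseesall}) by applying the trivial-case analysis (or directly Claim~\ref{cor:y}(i)) to a representative of each class, and (III)(\ref{adjacencies}) from Claim~\ref{cor:y}(ii). Finally, (III)(\ref{completeneigh}) is exactly Claim~\ref{lem:completeYW}, which applies because $y,y'\in Y\setminus W$ for $y\in U_1$, $y'\in U_2$ adjacent.

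The main obstacle I anticipate is the bookkeeping in case~(II)/(III)(\ref{cseesall}): turning the statement ``$N(y)$ lies in a union of three or four of the sets $D_*, T_*$'' into ``$N(y)\subseteq N(c)$ for a single $c\in V(C)$''. The sets $T_i$ and $T_{i+2}$ are contained in $N(c_i)\cap N(c_{i+2})$ only partially — $T_i\subseteq N(c_i)\cap N(c_{i+2})$ actually does hold by definition, so that is fine — but $D_j$ for $j=i$ versus $j=i+2$ forces different choices of $c$, and one must use the exclusion $y\notin W$ (hence the membership conditions of Definition~\ref{defiWC2d+1} fail) to eliminate the incompatible combinations. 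I would handle this by a small finite case analysis on which of $D_i, D_{i+2}, T_i, T_{i+2}$ actually meet $N(y)$, in each case exhibiting the witness $c\in\{c_i,c_{i+1},c_{i+2}\}$; the consecutive pair $c,c'$ in Claim~\ref{cor:y}(ii) then supplies the adjacency in (III)(\ref{adjacencies}). Everything else is a direct quotation of the claims already proved in this subsection, so no new structural input is needed.
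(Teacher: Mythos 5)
Your proposal tracks the paper's own proof almost exactly: you build $X=R\cup Q$ from Claims~\ref{lem:colourW2d+1} and~\ref{d=3}, dispose of components contained in $W$ (or meeting $\bigcup_i Y'_i$) via case~(I), handle trivial components by finding a single dominating vertex $c\in V(C)$, and invoke Claims~\ref{Ybipsamenbhds}, \ref{cor:y}, and~\ref{lem:completeYW} for the bipartite components, which is precisely the paper's argument. The one place where your wording is slightly off is the claim that ``$y\notin W$ rules out $j\in\{i-2,i+4\}$'': Definition~\ref{defiWC2d+1}(i) only produces $y\in W$ when $y$ also has a neighbour in a suitable $T_{i'}$, so, for example, $j=i-2$ with $N(y)\cap T=\emptyset$ survives; in that case $N(y)\subseteq D_{i-2}\subseteq N(c_{i-2})$, so the conclusion still holds after a relabelling, and the ``short case check'' you already announce covers this, but the blanket exclusion of those two values of $j$ is not literally true.
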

\begin{proof}
Apply Claim~\ref{lem:colourW2d+1} to obtain a set $R$ of size at most $2t-4$ that dominates $W$. Apply Claim~\ref{d=3} to obtain a set $Q$ of size at most $14$ that dominates $\bigcup_{i\in [t-3]}Y'_i$.  Set $X:=R\cup Q$. 

 Now, consider any vertex $y\in Y\setminus  W$.
By Claim~\ref{lem:y_i^*}, there is  an $i\in[t-3]$ such that all neighbours of $y$ in $N(C)$ belong to $D_i\cup T_{i-2}\cup T_{i}$. If $y$ is isolated in $G[Y]$, we take $c(y):=c_i$ which is as required for (II). 
If $y$ is not isolated in $G[Y]$, then Claim~\ref{Ybipsamenbhds} implies that $K$ is bipartite and the vertices of each partition class have identical neighbourhoods in $N(C)$.  So by Claim~\ref{cor:y}  and  Claim~\ref{lem:completeYW}, we have (III).
\end{proof}

\subsection{The structure of $G$ if it contains a $C_{t}$}\label{sec:longcycle}

In this subsection, we assume that
 $G$ has  a cycle $C=c_0,c_1,...,c_{t-1}$ of length $t$, and we let $T $, $ T'$ and $S$ be the sets given by Definition~\ref{def:DTTS} and Claim~\ref{lem:structure} for cycle $C$, while  $Y$ denotes the set of vertices located at distance two from $C$. 
In this subsection, all indices are taken modulo $t$.

We start by understanding some basic  adjacencies.

\begin{claim}\label{lem:edgesinN(C)_A}
If  $uv\in E(G[N(C)])$ and $i\in[t-1]$, then the following holds.
\begin{enumerate}[(a)]
\item If $u\in T_i$ then $v\in T_{i+3}\cup \bigcup_{j\in\{i+1, i-1, i-3\}}\big (T_{j}\cup  T'_{j}\cup
S_{j}\big )$; and
\item if $u\in T'_i$ then $v\in T_{i+3}\cup \bigcup_{j\in\{i+1,  i-1\}}\big (T_{j}\cup  T'_{j}\cup
S_{j}\big )$.
\end{enumerate}
\end{claim}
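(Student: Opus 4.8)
\textbf{Proof plan for Claim~\ref{lem:edgesinN(C)_A}.}

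The plan is to argue by contradiction in each case, exhibiting either a forbidden short odd cycle from $\Codd$ or an induced $P_t$. Fix an edge $uv$ of $G[N(C)]$ and suppose $u\in T_i$, so that $u$ is adjacent exactly to $c_i$ and $c_{i+2}$ on $C$. The vertex $v$ lies in $N(C)$, hence by Claim~\ref{lem:structure}(b) it belongs to $T\cup T'\cup S$, so its neighbourhood on $C$ is one of $\{j,j+2\}$, $\{j,j+4\}$, or $\{j,j+2,j+4\}$ for some index $j$. I would walk through the possible positions of $j$ relative to $i$ and show that all of them except the ones listed in (a) create a short cycle through part of $C$ together with $u$, $v$, or a short path closing up with $u$ and $v$ as a chord.

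The key step is the following observation, which I would isolate first: since $u\in T_i$ and $v$ is a neighbour of $u$, the two vertices $u,v$ together with a sub-path of $C$ form cycles of controlled length, and because $G$ is triangle-free and has odd girth at least $t-2$, the only surviving configurations are those in which the "span" of $v$'s neighbours on $C$ is shifted from $i$ by a distance whose parity and size avoids creating an odd cycle of length between $5$ and $t-4$. Concretely: if $v$ has a neighbour $c_j$ with $j$ close to $i$ (within a few steps), then going from $c_i$ (or $c_{i+2}$) along $C$ to $c_j$ and back through $u$ and $v$ gives a short cycle; one checks that this cycle is odd and of length at most $t-4$ precisely when $j\notin\{i-3,i-1,i+1,i+3\}$ (modulo also ruling out the trivial triangle cases using triangle-freeness, which forbids $j=i$, $j=i\pm2$ when that would make $u,v$ have a common neighbour on $C$). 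This pins the index of $v$'s "leftmost" neighbour to $\{i-3,i-1,i+1,i+3\}$. Then the type of $v$ ($T$, $T'$ or $S$) further restricts things: a $T'$ set spans $4$ along $C$, so if $v\in T'_j$ with $j=i+3$ then $v$ is adjacent to $c_{i+3}$ and $c_{i+7}$; the cycle $u,v,c_{i+3},c_{i+2},u$ is a $C_4$ and $u,v,c_{i+7},c_{i+8},\dots,c_i,u$ has length $t-4$, which is odd, contradiction — this is why $j=i+3$ is excluded for $T'$ and $S$ but allowed for $T$ in (a), and the analogous computation gives exactly the list in (b).

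For the second part, the argument is the mirror image with $u\in T'_i$ (so $u\sim c_i,c_{i+4}$); one redoes the same sub-path-closing computations, now using that $u$ reaches two points of $C$ that are $4$ apart, which changes which shifts of $v$'s span produce an odd cycle of forbidden length, and one ends up with the symmetric list $j\in\{i-1,i+1\}$ for the "spanning-$2$-or-$3$" neighbours plus $j=i+3$ only in the $T$-case. I expect the main obstacle to be purely bookkeeping: there are many index cases (each choice of $j$ modulo $t$, each of the three types for $v$, and the two anchor points $c_i$, $c_{i+2}$ or $c_i$, $c_{i+4}$ for $u$), and for each one has to verify both the parity and the length bound of the resulting cycle, as well as checking that the path/cycle one writes down is genuinely induced (no unexpected chords from $v$'s third neighbour when $v\in S_j$). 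The conceptual content is light; the care needed is in not missing a case and in consistently tracking indices modulo $t$, together with the fact that $t$ is odd (so that length $t-4$, $t-2$ cycles are odd and length $t-1$, $t-3$ are even).
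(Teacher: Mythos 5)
Your proposal is correct and takes essentially the same route as the paper. The paper's own proof of this claim is a single sentence --- that any uncovered placement of $v$ would produce an induced odd cycle of length at most $t-4$ on the vertices $V(C)\cup\{u,v\}$, which is forbidden --- and your write-up simply makes explicit, case by case, how those short odd cycles arise, including the observation that when $v\in T'_j$ or $S_j$ the ``far'' neighbour $c_{j+4}$ can close up a $(t-4)$-cycle through the long arc of $C$, which is exactly why the anchor $j=i+3$ survives only for type~$T$.
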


\begin{proof}
Any case not covered would lead to an induced odd cycle of length $\le t-4$ on vertices from $V(C)\cup \{ u,v\}$. But such a cycle is forbidden in $G$. 
\end{proof}

Claim~\ref{lem:edgesinN(C)_A} easily implies the following claim.

\begin{claim}\label{corollo}
If  $uv\in E(G[N(C)])$, then there is an edge between $N(v)\cap V(C)$ and $N(u)\cap V(C)$. 
\end{claim}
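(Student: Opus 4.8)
The goal is to show that if $uv\in E(G[N(C)])$, then there is an edge between $N(u)\cap V(C)$ and $N(v)\cap V(C)$. I would simply unwind the case distinction provided by Claim~\ref{lem:edgesinN(C)_A} and, in each case, exhibit a concrete pair of consecutive (or near-consecutive) vertices on $C$, one seen by $u$ and one seen by $v$. Recall that each of the sets $T_i$, $T'_i$, $S_i$ prescribes \emph{exactly} which vertices of $C$ the vertex is adjacent to: a vertex in $T_i$ is adjacent to $c_i$ and $c_{i+2}$; a vertex in $T'_i$ is adjacent to $c_i$ and $c_{i+4}$; and a vertex in $S_i$ is adjacent to $c_i$, $c_{i+2}$, $c_{i+4}$. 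So $N(u)\cap V(C)$ and $N(v)\cap V(C)$ are completely determined by the indices, and the claim becomes a finite check.

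\textbf{Key steps.}
First, by Claim~\ref{lem:structure}, since $|V(C)|=t$ we have $N(C)=T\cup T'\cup S$, so both $u$ and $v$ lie in one of these three families; up to swapping $u$ and $v$ we may assume $u\in T_i$ or $u\in T'_i$ for some $i$ (if $u\in S_i$, note $N(u)\cap V(C)\supseteq\{c_i,c_{i+2}\}$, so the $T_i$ analysis applies verbatim to find the required edge). Second, apply Claim~\ref{lem:edgesinN(C)_A}(a) or (b) to pin down the index $j$ with $v\in T_j\cup T'_j\cup S_j$ (again, for $S_j$ it suffices that $c_j,c_{j+2}\in N(v)$). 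Third, for each admissible value of $j$ relative to $i$, locate an edge of $C$ with one endpoint in $\{c_i,c_{i+2},c_{i+4}\}\cap N(u)$ and the other in $\{c_j,c_{j+2},c_{j+4}\}\cap N(v)$. Concretely: if $j=i+1$, then $c_{i+2}\in N(u)$ (it is in $N(u)$ for $T_i$, $S_i$, and $c_{i+2}$... wait, for $T'_i$ we have $c_i,c_{i+4}$, so use $c_i\in N(u)$ against $c_{i-1}=c_{j-2}$, but $j-2 = i-1$ need not be in $N(v)$ — so instead, when $u\in T'_i$ and $j=i+1$, use $c_{i+4}=c_{j+3}$; here $v\in T_j$ gives $c_j,c_{j+2}$, hmm). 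Let me restate this more carefully in the actual writeup by checking each pair $(\text{type of }u,\text{type of }v, j-i)$ individually; in every listed case one of $c_{i-1}c_i$, $c_i c_{i+1}$, $c_{i+1}c_{i+2}$, $c_{i+3}c_{i+4}$, $c_{i+4}c_{i+5}$, or the analogue shifted by the $+3$ offset, works, because the index sets $\{i,i+2,(i+4)\}$ and $\{j,j+2,(j+4)\}$ with $j\in\{i\pm1,i\pm3\}$ always interleave so that two of their members are consecutive on $C$.

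\textbf{Main obstacle.}
There is no real mathematical obstacle here — the content is entirely in Claim~\ref{lem:edgesinN(C)_A}, which already did the hard work of constraining the index $j$. The only thing to be careful about is bookkeeping: making sure that for the edge cases $j=i+3$ and $j=i-3$ (which only arise when $u$ or $v$ lies in a $T$-set, not a $T'$-set) the chosen consecutive pair of cycle vertices really is adjacent on $C$ and really is hit by both $u$ and $v$ — in particular that the $+4$ neighbour is used rather than trying to force the $+2$ neighbour. I would therefore organise the proof as a short table-driven argument: fix $i$, fix the type of $u$, and for each $v$-type and each admissible $j$ from Claim~\ref{lem:edgesinN(C)_A}, write down the witnessing edge $c_kc_{k+1}$ of $C$ with $c_k\in N(u)$, $c_{k+1}\in N(v)$ (or vice versa). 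Since the whole check is routine and finite, one sentence ("a straightforward inspection of the cases in Claim~\ref{lem:edgesinN(C)_A} yields the claim") with one or two representative cases spelled out should suffice.
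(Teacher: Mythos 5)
Your approach is the same as the paper's: the paper gives no explicit argument either, saying only that Claim~\ref{lem:edgesinN(C)_A} ``easily implies'' the claim, and the finite index check you sketch is correct whenever $u\in T_i$ or $u\in T'_i$.

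However, the parenthetical handling of $u\in S_i$ is not sound as written. You argue that since $N(u)\cap V(C)\supseteq\{c_i,c_{i+2}\}$, ``the $T_i$ analysis applies verbatim''; but that analysis begins by invoking Claim~\ref{lem:edgesinN(C)_A}(a), which is stated only for $u\in T_i$, and its proof rests on certain cycles through $u$, $v$ and an arc of $C$ being \emph{induced} --- the extra edge $uc_{i+4}$ could destroy that. So you cannot import the constraint on $j$ for free. When only one of $u,v$ lies in $S$ you can of course swap roles (since $S_j$ is an admissible destination in Claim~\ref{lem:edgesinN(C)_A}), but the case $u\in S_i$, $v\in S_j$ needs its own argument. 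Here is one: since $G$ is triangle-free, $\{c_i,c_{i+2},c_{i+4}\}$ and $\{c_j,c_{j+2},c_{j+4}\}$ must be disjoint. If moreover no edge joins them, set $d_1=(j-(i+4))\bmod t$ and $d_2=(i-(j+4))\bmod t$; then $d_1,d_2\ge 2$ and $d_1+d_2=t-8$. The cycles $u,c_{i+4},c_{i+5},\dots,c_j,v,u$ and $u,c_i,c_{i-1},\dots,c_{j+4},v,u$ have lengths $d_1+3$ and $d_2+3$, both at least $5$ and summing to the odd number $t-2$, and both are induced because the arcs avoid the remaining $C$-neighbours of $u$ and $v$. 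Hence one of them is an odd induced cycle of length at most $t-7\le t-4$, contradicting the forbidden odd girth. Adding this case makes your check complete.
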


We now turn to the edges between $N(C)$ and $Y$.
Our first claim follows directly from the fact that $G$ is $P_{t}$-free.

\begin{claim}\label{noYTedges}
There are no edges between $Y$ and $T$.
\end{claim}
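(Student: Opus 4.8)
I want to show that there are no edges between $Y$ and $T$ when $C$ has length $t$. Suppose for contradiction that some $y\in Y$ has a neighbour $z\in T_i$ for some $i\in[t-1]$. By definition of $T_i$, the vertex $z$ is adjacent precisely to $c_i$ and $c_{i+2}$ on $C$, and since $y\in Y$, it has no neighbour on $C$ at all. The idea is to use $y$ and $z$ as the endpoint of a long induced path that runs around the cycle, contradicting $P_t$-freeness.

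\textbf{Key steps.} First I would record the local picture: $c_i, z, c_{i+2}$ is an induced path of length $3$ (it is induced since $G$ is triangle-free, so $c_ic_{i+2}\notin E(G)$, using $t\ge 9>4$). Appending $y$ at $z$ gives the induced path $y, z, c_i$ --- here $y$ is not adjacent to $c_i$ since $y\in Y$. Now extend along the cycle in the direction away from $c_{i+2}$: the path $y, z, c_i, c_{i-1}, c_{i-2}, \ldots, c_{i+3}$ visits $z$ and all of $c_i, c_{i-1},\dots, c_{i+3}$, which is $t-2$ consecutive cycle vertices (all of $C$ except $c_{i+1}$ and $c_{i+2}$), so together with $y$ this path has $1 + 1 + (t-2) = t$ vertices. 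It remains to check it is induced: consecutive vertices are adjacent by construction; $y$ has no neighbours on $C$ and $yc_j\notin E(G)$ for all $j$; $z$ is adjacent on $C$ only to $c_i$ and $c_{i+2}$, and $c_{i+2}$ is not on the path, so $z$ has no chord to $c_{i-1},\dots,c_{i+3}$; and the $c_j$'s along the path are pairwise non-adjacent except for consecutive ones because $C$ is an induced cycle (here I use that $G$ is $C_k$-free for the relevant short odd $k$, or simply that $C$ was chosen induced). Hence we have an induced $P_t$, contradicting $P_t$-freeness, so no such edge $yz$ exists.

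\textbf{The main obstacle.} The only subtlety is making sure the long arc of the cycle used, namely $c_i, c_{i-1}, \dots, c_{i+3}$, is genuinely an induced path and that $z$ has no chord into its interior --- both of which follow immediately because $C$ is taken to be an induced cycle (equivalently, the longest odd cycle, which in a $(P_t,\Codd)$-free graph has no short chords creating shorter odd cycles) and because $T_i$ was defined so that $z$'s only cycle-neighbours are $c_i$ and $c_{i+2}$. So really this is a one-line observation, as the authors indicate; I would just present the explicit induced $P_t$ as above and note the contradiction with $G$ being $P_t$-free.
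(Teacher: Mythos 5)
Your construction is exactly the induced $P_t$ the authors have in mind when they say the claim "follows directly from the fact that $G$ is $P_t$-free": appending $y$ and $z$ to the long arc $c_i, c_{i-1}, \dots, c_{i+3}$ (avoiding $z$'s other cycle-neighbour $c_{i+2}$) gives $t$ vertices with no chords, since $C$ is induced and $y\in Y$ has no neighbours on $C$. The proof is correct and is essentially the same as the paper's.
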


There may, however, exist edges between $Y$ and $T'\cup S$. Let us see what we can say about these.



\begin{claim}\label{co}
For each $y\in Y$ there is an  $i\in[t-1]$ such that 
\begin{enumerate}[(a)]
\item $N(y)\subseteq Y\cup \bigcup_{j\in\{i, i+2\}} (T'_j\cup S_j)$; and\label{coroindexi}
\item $c_{i+2}$ is complete to $N(y)\setminus (Y\cup T'_{i})$ and $c_{i+4}$ is complete to $N(y)\setminus (Y\cup T'_{i+2})$. \label{coroTheNeighbour}
\end{enumerate}
\end{claim}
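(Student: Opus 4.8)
The plan is to work with the $C_t$ of $G$ and the sets $T',S$ from Definition~\ref{def:DTTS}, exploiting heavily that $G$ is $(P_t,\Codd)$-free. Fix $y\in Y$. By Claim~\ref{noYTedges}, $y$ has no neighbour in $T$, so every neighbour of $y$ outside $Y$ lies in $T'\cup S$; I first want to show these neighbours are ``concentrated'' on two cycle-positions that are $2$ apart. Note that a vertex of $T'_j$ is adjacent exactly to $c_j,c_{j+4}$ on $C$, and a vertex of $S_j$ is adjacent exactly to $c_j,c_{j+2},c_{j+4}$. The key observation is that if $y$ has a neighbour $u$ with $j\in\{j': uc_{j'}\in E(G)\}$ and another neighbour $v$ attached to positions that are ``far'' from those of $u$, then one can splice $u$ and $v$ into an arc of $C$ to produce either an induced $P_t$ or a short induced odd cycle (the arithmetic: $T'$ and $S$ both have $c_j$ and $c_{j+4}$ as attachment points, so two neighbours of $y$ whose index-sets do not ``interlock'' in an offset-$2$ pattern give a chord structure that, together with the complementary arc of $C$ and the path $u,y,v$, realises a forbidden subgraph). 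So I would carry out a short case analysis over the relative offsets of the attachment points of two neighbours $u,v$ of $y$, and conclude in each case that either the offset is as claimed, or a forbidden graph appears; the surviving configuration is exactly that there is an index $i$ with $N(y)\setminus Y\subseteq (T'_i\cup S_i)\cup(T'_{i+2}\cup S_{i+2})$, which is (a). (I should double-check the edge cases where $t$ is small relative to the offsets, but for odd $t\ge 9$ these are harmless.)

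For part (b), observe that once (a) is established, $c_{i+2}$ is adjacent on $C$ to every vertex of $S_i$ (by definition $S_i$ sees $c_i,c_{i+2},c_{i+4}$) and to every vertex of $S_{i+2}$ (it sees $c_{i+2}$, which is its own ``$c_j$''), and also to every vertex of $T'_{i+2}$ (since $T'_{i+2}$ sees $c_{i+2}$). The only neighbours of $y$ outside $Y$ that $c_{i+2}$ might fail to be adjacent to are those in $T'_i$, since a vertex of $T'_i$ is adjacent on $C$ only to $c_i$ and $c_{i+4}$. Hence $c_{i+2}$ is complete to $N(y)\setminus(Y\cup T'_i)$. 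Symmetrically, $c_{i+4}$ is adjacent on $C$ to every vertex of $S_i$, every vertex of $S_{i+2}$ and every vertex of $T'_i$ (all of which see $c_{i+4}$), but not necessarily to vertices of $T'_{i+2}$ (which see only $c_{i+2}$ and $c_{i+6}$); so $c_{i+4}$ is complete to $N(y)\setminus(Y\cup T'_{i+2})$. This is (b), and it is essentially a bookkeeping consequence of the attachment definitions once (a) is in hand.

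\textbf{Main obstacle.} The genuine work is entirely in part (a): ruling out a neighbour of $y$ whose attachment indices are ``spread out'' over more than the window $\{i,i+2\}$. The delicate point is that $T'$-vertices and $S$-vertices attach to $c_j$ \emph{and} $c_{j+4}$, so a neighbour of $y$ already spans four positions of $C$ by itself; two such neighbours with mismatched windows then threaten to create a long induced path (using the long complementary arc of $C$ together with $u,y,v$) or, if the windows overlap badly, a short odd chorded cycle. I would organise this as: show first that all neighbours of $y$ in $T'\cup S$ share a common attachment at some $c_i$ or that their attachment points fit in $\{c_{i},c_{i+2},c_{i+4}\}$ — using that $y$ together with two such neighbours and a carefully chosen arc of $C$ must avoid induced $P_t$ and induced odd $C_{\le t-4}$ — and then translate ``attachment points in a length-$4$ window'' into ``index in $\{i,i+2\}$'' using that each set $T'_j,S_j$ forces its two-or-three specific cycle positions. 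Getting the offsets exactly right (so that the forbidden subgraph really has the right length, given $t$ odd and $\ge 9$) is the step that needs care; everything else is definitional.
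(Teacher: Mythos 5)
Your proposal follows essentially the same route as the paper. For part (a) the paper's own proof is a single sentence: starting from any index $i$ with $y\in N(T'_i\cup S_i)$, it invokes Claim~\ref{noYTedges} together with the absence of induced odd cycles of length at most $t-4$ on $V(C)\cup N(y)\cup\{y\}$ to conclude $N(y)\subseteq Y\cup\bigcup_{j\in\{i-2,i,i+2\}}(T'_j\cup S_j)$, and then asserts ``this gives (a)''. The ``case analysis over the relative offsets'' that you defer is precisely the content of that one sentence, so the two arguments are really the same; the paper is no more explicit than you are. Your part (b) reasoning — reading off directly from Definition~\ref{def:DTTS} which $c_k$'s a vertex of $T'_j$ or $S_j$ is adjacent to — is exactly what the paper compresses into ``which implies (b)'', and your bookkeeping there is correct.

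One concrete suggestion for finishing (a) cleanly: rather than trying to land in one step on a two-index window $\{i,i+2\}$, prove the three-index statement $N(y)\subseteq Y\cup\bigcup_{j\in\{i-2,i,i+2\}}(T'_j\cup S_j)$ for \emph{every} index $i$ with $N(y)\cap(T'_i\cup S_i)\neq\emptyset$, and then intersect: if $y$ had neighbours in $T'_{i-2}\cup S_{i-2}$ and in $T'_{i+2}\cup S_{i+2}$, applying the three-index bound at $i-2$ would exclude $j=i+2$ (using $t\ge 9$, so $i+2\not\equiv i-4$), a contradiction. This gives the window $\{i',i'+2\}$ with $i'\in\{i-2,i\}$, as the statement requires, and avoids having to hit the two-index window directly from the forbidden-cycle argument.
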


\begin{proof}
Note that if $y\in Y\cap N(T'_i\cup S_i)$, for some $i\in[t-1]$, then, by Claim~\ref{noYTedges} and since there are no induced odd cycle of length at most $t-4$ on vertices from $V(C)\cup N(y) \cup \{ y\}$, we obtain that $N(y)\subseteq Y\cup \bigcup_{j\in\{i, i+2, i-2\}} (T'_j\cup S_j).$ This gives (a), which implies (b). 
\end{proof}

\begin{claim} \label{COMP}
Let  $y\in Y$ and let $i$ be as in Claim~\ref{co}. If $T'_i\neq\emptyset\neq T'_{i+2}$ and $y\in N(T'_i) \cup N(T'_{i+2})$, then $y$ is complete to $T'_i \cup T'_{i+2}$.
\end{claim}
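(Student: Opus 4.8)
The whole statement reduces to one $P_t$-construction, which I would isolate as a sub-claim (the \emph{mixed configuration}): \emph{if $y\in Y$, $i$ is as in Claim~\ref{co}, and there are $u\in T'_i\cap N(y)$ and $w\in T'_{i+2}\setminus N(y)$, then $G$ contains an induced $P_t$, a contradiction.} To prove this, recall $N_C(u)=\{c_i,c_{i+4}\}$, $N_C(w)=\{c_{i+2},c_{i+6}\}$, and $y$ has no neighbour on $C$. First I would note $uw\notin E(G)$: by Claim~\ref{lem:edgesinN(C)_A}(b) a neighbour of $u$ in $N(C)$ lying in some $T'_j$ satisfies $j\in\{i+1,i-1\}$, and $i+2\notin\{i+1,i-1\}\pmod t$ since $t\ge 9$. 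Then I would exhibit the path
\[
P:\quad y,\ u,\ c_i,\ c_{i-1},\ c_{i-2},\ \dots,\ c_{i+7},\ c_{i+6},\ w,\ c_{i+2},\ c_{i+3},
\]
that is, $y$, then $u$, then the arc of $C$ running from $c_i$ down to $c_{i+6}$ and avoiding $c_{i+1},\dots,c_{i+5}$ (which has $t-5$ vertices), then $w$, then $c_{i+2}$, then $c_{i+3}$. This uses $2+(t-5)+3=t$ pairwise distinct vertices, since $V(P)\cap V(C)$ consists of all of $c_0,\dots,c_{t-1}$ except $c_{i+1},c_{i+4},c_{i+5}$.

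Next I would verify that $P$ is induced. The vertex $y$ has no neighbour in $V(P)$ other than $u$, because $y\notin N(C)$ and $yw\notin E(G)$. The vertex $u$ has no neighbour in $V(P)$ besides $y$ and $c_i$: its other cycle-neighbour $c_{i+4}$ is not in $V(P)$, and $uw\notin E(G)$. Symmetrically $w$ has exactly $c_{i+6}$ and $c_{i+2}$ as its $V(P)$-neighbours, and these are precisely its neighbours on each side of it in $P$. The remaining vertices of $P$ lie on $C$, and among $V(P)\cap V(C)$ the only edges of $C$ present are the consecutive ones used in $P$: the arc edges together with $c_{i+2}c_{i+3}$, while $c_ic_{i+1}$, $c_{i+1}c_{i+2}$, $c_{i+3}c_{i+4}$, $c_{i+4}c_{i+5}$, $c_{i+5}c_{i+6}$ are all absent because one endpoint is missing from $V(P)$. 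Hence $P$ is an induced $P_t$, contradicting $P_t$-freeness. By reversing the cyclic order of $C$ (a relabelling of $c_0,\dots,c_{t-1}$ that interchanges $T'_i\leftrightarrow T'_{i+2}$ and $S_i\leftrightarrow S_{i+2}$ and preserves the hypotheses of Claim~\ref{co}), the same conclusion holds with the roles of $i$ and $i+2$ exchanged.

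Finally I would deduce Claim~\ref{COMP}. By hypothesis $y$ has a neighbour $a\in T'_i\cup T'_{i+2}$; suppose for contradiction $yb\notin E(G)$ for some $b\in T'_i\cup T'_{i+2}$. If $a$ and $b$ lie in different ones of the sets $T'_i$, $T'_{i+2}$, the mixed configuration (in the appropriate orientation) applies directly. If $a,b$ both lie in $T'_i$ (the case $T'_{i+2}$ being symmetric), pick any $w\in T'_{i+2}$, which is nonempty by hypothesis: if $yw\notin E(G)$ apply the mixed configuration to $a\in T'_i$ and $w\in T'_{i+2}$, and if $yw\in E(G)$ apply it (in the reversed orientation) to $w\in T'_{i+2}$ and $b\in T'_i$. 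Every case yields a contradiction, so $y$ is complete to $T'_i\cup T'_{i+2}$.

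The one genuinely delicate step is getting the $P_t$ in the mixed configuration exactly right: since both $u$ and $w$ have two neighbours on $C$ that are four apart, one must pick the traversed arc of $C$ and the three extra cycle-vertices appended at the end so that neither $u$ nor $w$ induces a chord, and then check that no chords arise among the cycle-vertices themselves. The symmetry reduction and the final case analysis are then routine.
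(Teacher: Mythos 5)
Your proof is correct and takes essentially the same approach as the paper: both arguments hinge on a single $P_t$-construction. The paper's path is the more economical $y,t_1,c_{i+4},c_{i+3},c_{i+2},t_2,c_{i+6},c_{i+7},\dots,c_{i-1}$, which is designed so that $yt_2$ is the \emph{only} possible chord (thus forcing $yt_2\in E(G)$ directly for an arbitrary $t_2\in T'_{i+2}$, after which swapping the roles of $T'_i$ and $T'_{i+2}$ finishes), whereas you route the arc the long way around, isolate a ``mixed configuration'' sub-claim proved by contradiction, and close with a short case analysis --- a valid, if slightly longer, way to package the same idea.
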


\begin{proof} 
Assume $y\in N(T'_i)$ (the other case is symmetric). Let $t_1 \in N(y)\cap T'_i$ and let $t_2 \in T'_{i+2}$. Consider the path $y, t_1, c_{i+4}, c_{i+3}, c_{i+2}, t_2, c_{i+6}, c_{i+7},...,c_{i-2},c_{i-1}.$ This path has length $t$,  so it cannot be induced. Note that the only one possible chord is $yt_2$. As the vertex $t_2$ was chosen  arbitrarily  from $T'_{i+2}$, this means $y$ is complete to $T'_{i+2}$.

Changing the roles of the sets $T'_{i}$ and $T'_{i+2}$ in the above argument we see that $y$ is also complete to $T'_{i}$.
\end{proof}

Claim~\ref{COMP} enables us to prove the following claim.

\begin{claim}\label{coroM}
There is a set $M\subseteq N(C)$ with $|M|\le 2t$ such that 
\begin{enumerate}[(i)]
\item if  $i\in[t-1]$ is such that $T'_{i-2}\cup T'_{i+2}\neq\emptyset$, then $N(T'_i)\cap Y\subseteq N(M)$; and
\item $M\cap T'_i\neq \emptyset$ for all $i$ with $T'_i\neq\emptyset$.\label{allT'i}
\end{enumerate}
\end{claim}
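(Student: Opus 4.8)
\textbf{Proof plan for Claim~\ref{coroM}.}

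The plan is to mimic the strategy used in Claim~\ref{lem:colourW2d+1}: for each relevant index $i$, show that the sets $N_{T'_i}(y)$ over the eligible vertices $y\in Y$ are \emph{nested}, so that picking a vertex $y$ with $N_{T'_i}(y)$ inclusion-minimal and then picking one neighbour $a_i\in N_{T'_i}(y)$ gives a small dominating set. First I would fix $i\in[t-1]$ with $T'_{i-2}\cup T'_{i+2}\neq\emptyset$ and consider two vertices $y,z\in N(T'_i)\cap Y$. By Claim~\ref{co}\ref{coroindexi}, after aligning indices we may assume $N(y),N(z)$ both meet $T'_i$ with their $T'$-neighbours confined to $T'_i\cup T'_{i+2}$ (or $T'_{i-2}\cup T'_i$; the two are symmetric). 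The key sub-claim to establish is:
\begin{equation}\label{nestT'}
N_{T'_i}(y)\subseteq N_{T'_i}(z)\quad\text{or}\quad N_{T'_i}(z)\subseteq N_{T'_i}(y).
\end{equation}
Suppose \eqref{nestT'} fails, so there are $t_1\in N_{T'_i}(y)\setminus N_{T'_i}(z)$ and $t_2\in N_{T'_i}(z)\setminus N_{T'_i}(y)$. Recall a vertex of $T'_i$ is adjacent exactly to $c_i$ and $c_{i+4}$ on $C$. Then $y,t_1,c_i,c_{i+4},t_2,z$ — wait, $c_ic_{i+4}$ is not an edge; instead I would route through the short arc: $t_1$ and $t_2$ both see $c_{i}$ and $c_{i+4}$, so consider $y,t_1,c_{i+4},t_2,z$ together with the long arc $c_{i+4},c_{i+5},\dots,c_{i-1},c_i,t_1$ — one checks this yields an induced cycle whose length is $t-2$ or an induced $P_t$, contradicting $G\in\mathcal G^*$ being $\Codd$-free or $P_t$-free. (The precise path is $z,t_2,c_i,c_{i-1},c_{i-2},\dots,c_{i+5},c_{i+4},t_1,y$, of length $t+1$, and the only possible chords are $yt_2$ or $zt_1$, both excluded by our choice of $t_1,t_2$; dropping one endpoint if needed gives an induced $P_t$.) The hypothesis $T'_{i-2}\cup T'_{i+2}\neq\emptyset$ is what prevents the degenerate situation where $N(y)$ could "wrap around" — more precisely, Claim~\ref{COMP} forces any $y\in N(T'_i)\cap Y$ that also sees $T'_{i+2}$ (which is nonempty in the relevant case) to be \emph{complete} to $T'_i$, so such $y$ are trivially dominated by any single vertex of $T'_i$; the nesting argument is only needed for the remaining $y$ whose $T'$-neighbourhood is confined to one side.

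Granting \eqref{nestT'}, for each $i$ with $T'_i\neq\emptyset$ I pick $a_i\in T'_i$ arbitrarily (this already handles requirement \ref{allT'i} and, via the inclusion-minimal choice described above, requirement (i) for all $y$ whose $T'$-neighbours stay within $T'_i\cup T'_{i+2}$ on the "$i$-minimal" side). Doing this symmetrically for the two sides of each index, and collecting $M:=\bigcup_{i\in[t-1]}\{a_i,a_i'\}$ where $a_i$ witnesses inclusion-minimality of $N_{T'_i}(\cdot)$ among $\{y: N(y)\cap T'\subseteq T'_i\cup T'_{i+2}\}$ and $a_i'$ among $\{y: N(y)\cap T'\subseteq T'_{i-2}\cup T'_i\}$, we get $|M|\le 2t$. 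Every $y\in N(T'_i)\cap Y$ with $T'_{i-2}\cup T'_{i+2}\neq\emptyset$ then either is complete to $T'_i$ (dominated by $a_i\in M$) or has $N_{T'_i}(y)\supseteq N_{T'_i}(a_j$-witness$)\ni$ some element of $M$, so $y\in N(M)$.

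I expect the main obstacle to be bookkeeping the index arithmetic and the two symmetric sides correctly: a vertex $y\in Y$ of the type in Claim~\ref{co} sits "between" $c_i$ and $c_{i+2}$, and its $T'$-neighbours may lie in $T'_{i-2}$, $T'_i$, or $T'_{i+2}$, so one must be careful which pair of consecutive indices is being treated and ensure the chosen representatives $a_i$ genuinely cover all cases — in particular that the hypothesis in (i), namely $T'_{i-2}\cup T'_{i+2}\neq\emptyset$, is exactly the condition under which the forbidden-path argument in \eqref{nestT'} produces a path of the right length. The geometric verification that the candidate paths are induced and have length exactly $t-2$ or $t$ (so as to invoke $\Codd$-freeness or $P_t$-freeness) is routine but must be done carefully, using that each $T'_j$ is stable (Claim~\ref{lem:structure}) and Claim~\ref{lem:edgesinN(C)_A}\,(b) to rule out stray chords among the $T'$-vertices.
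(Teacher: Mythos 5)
Your proposal is built around a nesting argument modelled on Claim~\ref{lem:colourW2d+1}, with Claim~\ref{COMP} invoked only as a side tool for a special case; the paper does the opposite and needs no nesting at all. The disagreement stems from a misreading of Claim~\ref{COMP}: its hypothesis is $y\in N(T'_i)\cup N(T'_{i+2})$ (a union, not an intersection), so it applies to \emph{every} $y\in N(T'_i)\cap Y$ as soon as $T'_{i+2}\neq\emptyset$ (or, symmetrically, $T'_{i-2}\neq\emptyset$), regardless of whether $y$ already sees $T'_{i+2}$. The conclusion that $y$ is complete to $T'_i\cup T'_{i+2}$ \emph{forces} $y$ to see $T'_{i+2}$. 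Hence under the hypothesis of (i) there is no residual class of vertices ``confined to one side'' for which nesting would be needed, and the paper's proof is just one line: take $M$ to contain one vertex of each nonempty $T'_i$ (this gives (ii) and $|M|\le t\le 2t$), and then (i) follows because Claim~\ref{COMP} makes every $y\in N(T'_i)\cap Y$ complete to $T'_i$, hence adjacent to the chosen vertex in $M\cap T'_i$.

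Moreover, the nesting argument as you wrote it does not go through. In the path $z,t_2,c_i,c_{i-1},\dots,c_{i+5},c_{i+4},t_1,y$ both endpoints $t_1,t_2$ lie in $T'_i$, and every vertex of $T'_i$ is adjacent to \emph{both} $c_i$ and $c_{i+4}$ (that is what $T'_i$ means). Since $c_i$ and $c_{i+4}$ are both interior vertices of your path, $t_1c_i$ and $t_2c_{i+4}$ are chords — the path is never induced, independently of the choices of $t_1,t_2$. You also do not exclude the possible chord $yz$ (both are in $Y$ and could be adjacent). So the claimed dichotomy~\eqref{nestT'} is not established. None of this is fatal to your final construction of $M$ (picking a vertex from each nonempty $T'_i$ is fine, and adding a second one only wastes a factor of two within the $2t$ budget), but the justification for (i) must come entirely from Claim~\ref{COMP}, not from a nesting argument.
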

\begin{proof}
Choose for $M$ one vertex  from each of the non-empty sets $T'_i$, for $i\in[t-1]$. Then, $|M|\le 2t$, and (ii) clearly holds. In order to see (i), note that by Claim~\ref{COMP} any neighbour of $T'_i$ in $Y$ is complete to $T'_i$, and thus sends an edge to $x_i$.
\end{proof}

We now check  edges in $Y$ and their neighbours in $N(C)$.

\begin{claim}\label{lem:edgesinY_A}
For any $i\in[t-1]$, if $yy'\in E(G[Y])$ and $y\in N(T'_i\cup S_i)$, then
\[N(y')\setminus Y\subseteq \bigcup_{j\in\{i-3, i-1, i+1, i+3\}}(T'_{j}\cup
S_{j}).\]
\end{claim}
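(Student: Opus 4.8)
The statement to prove is Claim~\ref{lem:edgesinY_A}, which is the exact analogue of Claim~\ref{lem:yy'} from the $C_t$-free case, and the plan is to mimic that proof: deduce the restriction on $N(y')\setminus Y$ purely from the non-existence of short odd induced cycles. First I would fix $i\in[t-1]$ with $yy'\in E(G[Y])$ and $y\in N(T'_i\cup S_i)$, and pick a witness $w\in N(y)\cap (T'_i\cup S_i)$; by Definition~\ref{def:DTTS} this $w$ is adjacent to $c_i$ and to $c_{i+4}$ (and, if $w\in S_i$, also to $c_{i+2}$). Now suppose for contradiction that $y'$ has a neighbour $z\in N(C)\setminus Y$ whose indices on $C$ are \emph{not} among those allowed, i.e.\ $z\notin\bigcup_{j\in\{i-3,i-1,i+1,i+3\}}(T'_j\cup S_j)$.

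\textbf{Key steps.} By Claim~\ref{noYTedges} there are no edges between $Y$ and $T$, so $z\in T'\cup S$, say $z\in T'_h\cup S_h$; hence $z$ is adjacent to $c_h$ and $c_{h+4}$. The point is that $y,y'$ are at distance two from $C$, so the shortest way to ``close'' a cycle through $y$ and $y'$ must run along $C$, and together with the short path $w,y,y',z$ (which is induced after we check $wz\notin E(G)$, $wy'\notin E(G)$, $zy\notin E(G)$ — the first two because $w\in N(C)$ cannot be adjacent to $y'$ or $z$ without creating a triangle or a short cycle with $C$, via Claim~\ref{lem:edgesinN(C)_A} and Claim~\ref{co}, and the last for the same reason) we get an induced cycle whose remaining part is an arc of $C$. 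Depending on the parity of the distance between the index $i$ (resp.\ $i+4$) and the index $h$ (resp.\ $h+4$) on $C$, for any $h$ \emph{not} in the allowed window $\{i-3,i-1,i+1,i+3\}$ one of the two arcs of $C$ between $\{c_i,c_{i+4}\}$ and $\{c_h,c_{h+4}\}$ produces an induced odd cycle of length at most $t-4$ (using that $t$ is odd, so arcs of $C$ of one parity give odd cycles when combined with the even-length path $w\,y\,y'\,z$, i.e.\ a path on $4$ vertices / $3$ edges). This contradicts $G$ being $\Codd$-free. Concretely I would split into the cases $z\in T'_h$ and $z\in S_h$, and within each, run the index bookkeeping as in the proofs of Claims~\ref{COMP} and~\ref{lem:completeYW}, choosing the shorter arc of $C$ in each subcase so that the resulting induced cycle has odd length $\le t-4$.

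\textbf{Main obstacle.} The only delicate point is the index/parity bookkeeping: verifying that for \emph{every} $h\notin\{i-3,i-1,i+1,i+3\}$ (mod $t$) there is a choice of endpoints $c_a\in\{c_i,c_{i+4}\}$, $c_b\in\{c_h,c_{h+4}\}$ and a direction around $C$ such that the arc from $c_a$ to $c_b$ has the right length parity and the total cycle length is both odd and $\le t-4$. This requires keeping track of the four possible ``endpoint pairs'' and the two arc directions, and noting that when $w\in S_i$ (resp.\ $z\in S_h$) we gain an extra attachment point $c_{i+2}$ (resp.\ $c_{h+2}$), which only makes more short cycles available, so the $S$-cases are no harder than the $T'$-cases. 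Once the (routine but slightly tedious) case analysis confirms that each forbidden value of $h$ yields a forbidden short odd induced cycle, the claim follows. I expect this to be short — essentially a one-line proof in the paper's style, "this follows from the absence of short odd induced cycles," with the case check left implicit, exactly as in Claims~\ref{DY...},~\ref{lem:yy'} and~\ref{co}.
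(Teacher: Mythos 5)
Your plan is essentially the paper's proof: the paper gives the single line ``Any case not covered would lead to an induced odd cycle on at most $t-4$ vertices from $V(C)\cup N(y)\cup N(y')\cup\{y,y'\}$,'' and you reconstruct it by closing the induced path $w,y,y',z$ along an arc of $C$, where $w\in N(y)\cap(T'_i\cup S_i)$ and $z\in N(y')\setminus Y$. Your preliminary non-adjacencies $wy',zy\notin E(G)$ are immediate from triangle-freeness, and $wz\notin E(G)$ for $z$ outside the allowed window follows from Claim~\ref{lem:edgesinN(C)_A} (with the easy analogue for $w\in S_i$).

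There is, however, a subtle point that your case analysis would run into for $t=9$. The cycle $w,y,y',z,c_b,\ldots,c_a,w$ has $5+k$ vertices ($k$ being the number of arc edges) and is an odd cycle of length at most $t-4$ only when $k$ is even and $k\le t-9$; for $t=9$ this forces $k=0$, so $w$ and $z$ must share a neighbour on $C$. When $w\in T'_i$ and $z\in T'_{i+2}$, the sets $N(w)\cap V(C)=\{c_i,c_{i+4}\}$ and $N(z)\cap V(C)=\{c_{i+2},c_{i+6}\}$ are disjoint (indices mod $9$), so no $C_5$ arises, and the shortest odd induced cycle one can build through $w,y,y',z$ and $C$ is a $C_7$, which is not forbidden when $t=9$. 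What actually excludes this configuration is $P_t$-freeness: taking $i=0$, the path $c_3,c_4,w,y,y',z,c_6,c_7,c_8$ is an induced $P_9$. (The subcase $z\in T'_{i-2}$ is symmetric; if $w\in S_i$ or $z\in S_{i\pm2}$ the extra attachment point on $C$ does give a $C_5$.) So for $t=9$ the justification needs to read ``an induced odd cycle of length at most $t-4$ or an induced $P_t$,'' and your promise that ``each forbidden value of $h$ yields a forbidden short odd induced cycle'' is not literally true there. The paper's one-line proof shares this same imprecision, so you have faithfully reproduced its argument; just be aware of the $P_t$ fallback in the $t=9$ sub-cases.
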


\begin{proof}
Any case not covered would lead to an induced odd cycle on at most $t-4$ vertices from $V(C)\cup N(y) \cup N(y')\cup \{ y,y'\}$. 
\end{proof}

Now, we will identify a useful subset $W \subseteq Y$.

\begin{defn}\label{defiW}
We define $W\subseteq Y\cap N(T'\cup S)$ as the set of endvertices of all edges $yz$ such that there is $i\in [t-1]$ with $y\in Y\cap N(T'_i\cup S_i)$  and $z\in Y\cap \bigcup_{j\in\{i-3, i+3\}} N(T'_{j}\cup
S_{j})$.
\end{defn}

Note that by Claim~\ref{Ybipsamenbhds}, any component of $G[Y]$ has either all or none of its vertices in~$W$. 


\begin{claim}\label{corooo}
For any $i\in[t-1]$, $yy'\in E(G[Y])$ and $y\in N(T'_i \cup S_i)\setminus W$ the following hold.
\begin{enumerate}[(a)]
\item If $N(y)\cap N(T'_{i+2})\neq\emptyset$, then $N(y')\setminus Y\subseteq  T'_{i+1}\cup S_{i+1}$.\label{itemthree}
\item Each $w \in N(N(y')\setminus Y)\setminus Y$ is adjacent to some neighbour of $N(y)$ in $V(C)$.\label{itemone}
\item There are  consecutive vertices $c, c', c''$ on $C$ such that $N(y)\setminus Y\subseteq  N(c)\cup N(c'')$ and $N(y')\setminus Y\subseteq N(c')$.
\item If $y, y'\notin \bigcup_{i\in [t-1]}(N(T'_i)\cap N(T'_{i+2}))$, then there  are consecutive vertices $c, c'$ on $C$ such that $N(y)\setminus Y\subseteq N(c)$ and $N(y')\setminus Y\subseteq N(c')$.\label{itemtwo}
\end{enumerate}
\end{claim}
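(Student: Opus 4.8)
The statement of Claim~\ref{corooo} collects four consequences of the $P_t$-freeness of $G$ about an edge $yy'$ in $G[Y]$ where $y\in N(T'_i\cup S_i)\setminus W$. I would prove all four parts by the same device that has been used repeatedly in this subsection: assuming a forbidden configuration, trace out an induced $P_t$ (or an induced odd cycle of length at most $t-4$, or a $C_t$) through $V(C)$, $y$, $y'$ and a few of their neighbours, and derive a contradiction. The key preliminary observation, which I would state first, is that since $y\notin W$ (Definition~\ref{defiW}) and $yy'\in E(G[Y])$, the vertex $y'$ cannot have a neighbour in $\bigcup_{j\in\{i-3,i+3\}}N(T'_j\cup S_j)$; combined with Claim~\ref{lem:edgesinY_A} this forces $N(y')\setminus Y\subseteq \bigcup_{j\in\{i-1,i+1\}}(T'_j\cup S_j)$, which is the backbone of the whole argument.

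\textbf{Steps.} First I would prove (a): assume $N(y)\cap N(T'_{i+2})\neq\emptyset$, so $y$ is (by Claim~\ref{co} with index $i$, noting $i+2$ is the only other admissible index) adjacent into $T'_{i+2}\cup S_{i+2}$ as well. If $y'$ had a neighbour $z'\in T'_{i-1}\cup S_{i-1}$, then picking a neighbour $t_1\in N(y)\cap(T'_{i+2}\cup S_{i+2})$ one builds a path $z', y', y, t_1$ continued along $C$ from (a vertex hit by) $t_1$ around to (a vertex hit by) $z'$; using Claims~\ref{co}\eqref{coroTheNeighbour} and~\ref{lem:edgesinN(C)_A} to control chords, this has length $t$, contradiction, so $N(y')\setminus Y\subseteq T'_{i+1}\cup S_{i+1}$. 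Part (b) is then a direct application of Claim~\ref{corollo} (for the $y'z'$-type edges with $z'=$ a vertex of $N(y')\setminus Y$) together with Claim~\ref{co}\eqref{coroTheNeighbour}, which guarantees the relevant cycle-vertices are complete to the right pieces. For (c) and (d), once (a) and the backbone observation pin down $N(y')\setminus Y$ to one or two of the sets $T'_{i\pm1}\cup S_{i\pm1}$ and $N(y)\setminus Y$ to $T'_i\cup S_i\cup T'_{i+2}\cup S_{i+2}$, I would read off the cycle-neighbours directly: by Definition~\ref{def:DTTS}, $T'_j\cup S_j$ sends edges only to $c_j, c_{j+4}$ (and $c_{j+2}$ for $S_j$), so taking $c=c_{i}$, $c'=c_{i+1}$ (and $c''=c_{i+2}$ in case (c)) works, where in case (d) the extra hypothesis $y\notin N(T'_i)\cap N(T'_{i+2})$ collapses $N(y)\setminus Y$ into (the neighbourhood of) a single $c$.

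\textbf{Main obstacle.} The delicate point is part (c)/(d): I must make sure that the three (resp.\ two) consecutive cycle-vertices really cover \emph{all} of $N(y)\setminus Y$ and $N(y')\setminus Y$ simultaneously, i.e.\ that the indices genuinely line up as $\{i, i+2\}$ for $y$ and $\{i+1\}$ for $y'$ and not some rotated variant, and that $S$-type neighbours (which reach $c_{j+2}$) don't leak outside the claimed window. This requires carefully invoking Claim~\ref{co}\eqref{coroTheNeighbour} for \emph{both} $y$ and $y'$ with \emph{consistent} indices, which is exactly where the hypothesis $y\in N(T'_i\cup S_i)$ (rather than merely $y\in N(C)$-adjacent) is used to anchor the index. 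Beyond that, each contradiction is a routine "write the path, check it is induced" computation of the type already carried out in Claims~\ref{COMP}, \ref{lem:completeYW} and \ref{lem:colourW2d+1}, and the only real care needed is bookkeeping the at most one or two possible chords (always of the form $y t_2$ or $y' z'$, killed by the $\notin W$ assumption or by Claim~\ref{co}).
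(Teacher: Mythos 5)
Your overall architecture matches the paper's (very terse) proof: the key ``backbone'' step — combining the definition of $W$ with Claim~\ref{lem:edgesinY_A} to get $N(y')\setminus Y\subseteq\bigcup_{j\in\{i-1,i+1\}}(T'_j\cup S_j)$ — is exactly what the paper leans on, and your use of Claims~\ref{co}, \ref{corollo} for parts (b)--(d) is the intended route. But two of your concrete steps do not go through as written.

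For part (a), you replace the clean argument by an explicit path construction ``$z',y',y,t_1$ continued along $C$'' and assert it gives a $P_t$. Check the lengths: with $z'\in T'_{i-1}\cup S_{i-1}$ (hitting $c_{i-1}$ and $c_{i+3}$) and $t_1\in T'_{i+2}\cup S_{i+2}$ (hitting $c_{i+2}$ and $c_{i+6}$), the only induced walk you can close is $z',y',y,t_1,c_{i+6},c_{i+7},\dots,c_{i-1},z'$, which is a cycle on $t-2$ vertices — odd but strictly longer than $t-4$, hence \emph{not} forbidden; every $P_t$-candidate acquires a chord at $z'c_{i-1}$ or $t_1c_{i+2}$. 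The correct argument is the one your ``plan'' already implies: apply the $W$-exclusion a \emph{second} time, now with index $i+2$ (since $N(y)\cap(T'_{i+2}\cup S_{i+2})\neq\emptyset$), which forbids $y'\in N(T'_{i-1}\cup S_{i-1})$ directly; no path needed.

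For parts (c)/(d), fixing $c=c_i$, $c'=c_{i+1}$, $c''=c_{i+2}$ only covers $N(y')\setminus Y$ when it sits entirely inside $T'_{i+1}\cup S_{i+1}$, i.e.\ after (a) has been applied. When $y$ has no neighbour in $T'_{i+2}\cup S_{i+2}$ you cannot invoke (a), and $N(y')\setminus Y$ may genuinely meet $T'_{i-1}$, which is \emph{not} in $N(c_{i+1})$. Your ``main obstacle'' paragraph correctly names this as the delicate point, but it offers no resolution. What is needed (and what the paper's citation of Claim~\ref{co} points at) is a small case split: use Claim~\ref{co}\eqref{coroTheNeighbour} for $y'$ to conclude $N(y')\setminus Y\subseteq N(c_{i+1})$ when $y'\notin N(T'_{i-1})$, or $N(y')\setminus Y\subseteq N(c_{i+3})$ when $y'\notin N(T'_{i+1})$, and in each case pick the three (resp.\ two) consecutive cycle vertices accordingly — they are not always $c_i,c_{i+1},c_{i+2}$. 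As written, your argument leaves this gap open.
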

\begin{proof}
Item~\eqref{itemthree}  is 
 straightforward from Claim~\ref{lem:edgesinY_A}. For $w\in N(N(y')\setminus Y)\cap V(C)$, item~\eqref{itemone} follows from Claim~\ref{lem:edgesinY_A} and the definition of $W$, and for 
 $w \in N(N(y')\setminus Y)\cap N(C)$,  item~\eqref{itemone} follows from Claim~\ref{corollo}. 
 For~(c) and~\eqref{itemtwo}, we use Claim~\ref{co}.
\end{proof}

We now show that $W$  is dominated by a set of bounded size,~$B$. 

\begin{claim}\label{lem:colourW}
There is a set $B\subseteq N(C)$ such that $W \subseteq N(B)$ and $|B|\le  t$.
\end{claim}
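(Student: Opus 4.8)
\textbf{Plan for the proof of Claim~\ref{lem:colourW}.} The idea is to mimic the proof of Claim~\ref{lem:colourW2d+1}: for each index $i\in[t-1]$ we want to pick a single vertex of $N(C)$ that dominates all of $W\cap N(T'_i\cup S_i)$ that are ``witnessed at $i$'', and then take $B$ to be the union of these choices together with the set $M$ from Claim~\ref{coroM}. Since there are $t$ indices and $M$ has size at most $2t$, some care will be needed to get the bound $|B|\le t$; in fact I expect that the natural construction gives $|B|=O(t)$, and that to hit the stated constant exactly one uses that the relevant vertices can be taken inside the $T'_i$ (where Claim~\ref{coroM}\eqref{allT'i} already placed one vertex of $M$ per nonempty $T'_i$), so that the new choices overlap with $M$ or are few in number.

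\textbf{Key steps, in order.} First, fix $i\in[t-1]$ with $Y\cap N(T'_i\cup S_i)\cap W\neq\emptyset$. The core claim to establish is an inclusion-comparability statement in the spirit of~\eqref{prop1i}: for any two vertices $y,z\in W\cap N(T'_i\cup S_i)$ (both witnessed at the same index $i$), either $N_{T'_i\cup S_i}(y)\subseteq N_{T'_i\cup S_i}(z)$, or the reverse inclusion holds, or $y$ and $z$ already have a common neighbour among the bounded set $M\cup(\text{one vertex per }S_i)$. To prove this, suppose it fails; then there are $t_1\in N(y)\setminus N(z)$ and $t_2\in N(z)\setminus N(y)$ in $T'_i\cup S_i$. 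Now use the definition of $W$ (Definition~\ref{defiW}): $z$ is an endvertex of an edge $zz'$ with $z'\in Y\cap N(T'_j\cup S_j)$ for some $j\in\{i-3,i+3\}$, say $j=i+3$. Pick $t_3\in N(z')\cap(T'_{i+3}\cup S_{i+3})$. Then, just as in Claim~\ref{lem:colourW2d+1}, assemble the path
\[
y,\ t_1,\ c_?,\ t_2,\ z,\ z',\ t_3,\ c_?,\ c_?,\ \dots,\ c_?
\]
whose non-$C$ vertices are $y,t_1,t_2,z,z',t_3$ and whose $C$-portion walks around the remaining vertices of $C$; the precise indices of the $c$'s are read off from Claim~\ref{co} (which pins down where $N(y)$ meets $C$, via $c_{i+2},c_{i+4}$) and Claim~\ref{lem:edgesinY_A} (which controls where $N(z')\setminus Y$ can sit). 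One checks this path has length exactly $t$ and is induced --- the potential chords $yt_2$, $yt_3$, $zt_1$, $y z'$, $z t_3$, and chords to $C$ are all ruled out by $t_1\notin N(z)$, $t_2\notin N(y)$, Claim~\ref{co}\eqref{coroindexi} (which forbids $yt_3$ since $t_3$ lies over an index too far from $i$), $yz'\notin E$ (else the component of $G[Y]$ containing $y$ would lie in $W$ via the other clause, contradicting... actually $y\in W$ is fine, but $yz'\in E$ would shorten the path; handle by Claim~\ref{Ybipsamenbhds}/triangle-freeness), and the adjacency restrictions on $C$. This contradicts $P_t$-freeness and proves the comparability statement.

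\textbf{Assembling $B$.} Given the comparability statement, for each $i$ with $W\cap N(T'_i\cup S_i)\neq\emptyset$ choose $y_i\in W$ (witnessed at $i$) with $N_{T'_i\cup S_i}(y_i)$ inclusion-minimal, and let $b_i$ be an arbitrary vertex of $N_{T'_i\cup S_i}(y_i)$; then every $y\in W$ witnessed at $i$ either is dominated via $b_i$ or shares a neighbour with $y_i$ in the ``common neighbour'' set from the third alternative. Collect $B := \{b_i : i\in[t-1]\}\cup M$ and argue that $|B|\le t$: each $y\in W$ is by definition an endvertex of some edge whose two endvertices are witnessed at indices differing by $3$, so only a restricted set of ``witnessing'' indices actually occurs; combined with $M$ already containing one vertex of each nonempty $T'_i$, the $b_i$'s can be absorbed or bounded so that the total is at most $t$. \textbf{The main obstacle} I foresee is exactly this counting: getting the constant down to $t$ (rather than the easy $3t$) will require noticing that the $b_i$ can be chosen inside $T'_i$ whenever $T'_i\neq\emptyset$ --- so they are already counted in $M$ --- and that the indices $i$ with $S_i\neq\emptyset\neq W\cap N(S_i)$ but $T'_i=\emptyset$ are few, perhaps because Claim~\ref{co}\eqref{coroTheNeighbour} forces a $c$ on $C$ to dominate such a $y$, which would instead route it into alternative (II)-type behaviour rather than needing a vertex in $B$. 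I would flag this bookkeeping as the one place to be careful; the induced-path construction itself is a routine variation of Claim~\ref{lem:colourW2d+1}.
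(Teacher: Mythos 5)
Your core idea — prove an inclusion-comparability statement for the $N_{T'_i\cup S_i}$-neighbourhoods of vertices in $W\cap N(T'_i\cup S_i)$ via an induced-$P_t$ contradiction, then pick one vertex per index from an inclusion-minimal neighbourhood — is exactly the approach the paper uses. But the ``assembling $B$'' step is where the proposal goes wrong, and the bookkeeping you flag as worrying is in fact not an issue at all in the clean version.

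Specifically, you do not need to include $M$ in $B$, and you do not need a third alternative in the comparability statement. The paper's comparability is simply: for $y,y'\in W\cap N(T'_i\cup S_i)$ that are the $N(T'_i\cup S_i)$-ends of edges $yz,y'z'$ from \emph{different} components of $G[Y]$, either $N_{T'_i\cup S_{i}}(y) \subseteq N_{T'_i\cup S_{i}}(y')$ or the reverse inclusion holds. This is proved by the induced path $y, s_1, c_{i+4}, s_2, y', z', s_3, c_{i+7}, c_{i+8},\dots, c_{i-1}$, where $s_1\in N_{T'_i\cup S_i}(y)\setminus N(y')$, $s_2\in N_{T'_i\cup S_i}(y')\setminus N(y)$, and $s_3\in N_{T'_{i+3}\cup S_{i+3}}(z')$; since $s_1,s_2\in T'_i\cup S_i$ are both adjacent to $c_{i+4}$ and $s_3$ is adjacent to $c_{i+7}$, this path has $t$ vertices, and the different-component assumption together with Claim~\ref{lem:edgesinY_A} rules out all chords. (Your worry about the chord $yz'$ is resolved precisely by taking $yz$ and $y'z'$ in different components; when $y,y'$ lie in the same component, Claim~\ref{Ybipsamenbhds} combined with Claim~\ref{lem:edgesinY_A} forces $y$ and $y'$ into the same bipartition class with identical neighbourhoods in $N(C)$, so there is nothing to prove.) Given comparability, one simply takes, for each $i\in[t-1]$, an edge $yz\in E(G[W])$ with $y\in N(T'_i\cup S_i)$, $z\in N(T'_{i+3}\cup S_{i+3})$, minimising $N_{T'_i\cup S_i}(y)$, and puts $x_i\in N_{T'_i\cup S_i}(y)$ into $B$. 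Since $[t-1]$ has $t$ elements, $|B|\le t$ immediately; there is no absorption into $M$ to perform, and the ``natural construction'' does not give $3t$. Your attempted fix — routing $b_i$ inside $T'_i$ so it collides with $M$ — is unnecessary and would not work anyway when the minimising $y$ has its $T'_i\cup S_i$-neighbours only in $S_i$.
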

\begin{proof}
Let $yz, y'z'\in E(G[W])$ belong to different components from $Y$, and assume $i\in [t-1]$ is such that $y,y'\in N(T'_i\cup S_i)$ and $z,z' \in N(T'_{i+3}\cup S_{i+3})$.
We claim that at least one of the following holds.
\begin{equation} \label{prop1}
    N_{T'_i\cup S_{i}}(y) \subseteq N_{T'_i\cup S_{i}}(y') \text{, or } N_{T'_i\cup S_{i}}(y') \subseteq N_{T'_i\cup S_{i}}(y).
\end{equation}
Indeed, if \eqref{prop1} is false, then  there are vertices $s_1\in N_{T'_i\cup S_{i}}(y) \setminus N_{T'_i\cup S_{i}}(y')$, $s_2\in N_{T'_i\cup S_{i}}(y')  \setminus N_{T'_i\cup S_{i}}(y)$, and $s_3\in N_{T'_{i+3}\cup S_{i+3}}(z')$. Consider the path $y, s_1, c_{i+4}, s_2, y', z', s_3, c_{i+7}, c_{i+8}, c_{i+9}, ... , c_{i-1}.$
This path has length $t$, and is induced, a contradiction. This proves~\eqref{prop1}.

Now, for each $i\in [t-1]$ with $T'_i\cup S_i\neq\emptyset$, take an edge $yz\in E(G[W])$, with  $y\in Y\cap N(T'_i\cup S_i)$  and $z\in Y\cap N(T'_{i+3}\cup
S_{i+3})$, and with the property that among all such choices, $N_{T'_i\cup S_{i}}(y)$ is in\-clusion-mini\-mal. Choose an arbitrary vertex $x_i\in N_{T'_i\cup S_{i}}(y)$. Then by~\eqref{prop1},  $B:=\{x_0, x_1, ..., x_{t-1}\}$ is as desired.
\end{proof}

We now show an interesting fact about the neighbourhoods of components of $G[Y]$.

\begin{claim}\label{nononono}
Let   $K$ be a bipartite component of $G[Y\setminus W]$, with partition classes $U_1, U_2$, and let $w\in N(U_1)\cap N(C)$. If there is no edge from $w$ to $N(U_2)\cap N(C)$, then 
 $N(w)\cap Y\subseteq V(K)$.
\end{claim}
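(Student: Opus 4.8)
The plan is to argue by contradiction, exploiting the fact that $w\in N(U_1)\cap N(C)$ has no edge to $N(U_2)\cap N(C)$. Suppose some vertex $y'\in N(w)\cap Y$ lies outside $V(K)$. Since $w\in N(C)$, by Claim~\ref{noYTedges} we have $w\in T'\cup S$, so $w\in N(T'_i\cup S_i)$-type membership; more precisely $w$ sits in some $T'_a\cup S_a$. First I would pick a vertex $u_1\in U_1$ adjacent to $w$, and a vertex $u_2\in U_2$ adjacent to $u_1$ inside $K$ (using that $K$ is a nontrivial component, which it must be since it has a neighbour $w$ with an incident edge into $U_1$ and $U_2$ is also nonempty as $K$ is bipartite with both classes meeting $N(C)$, as implicitly needed). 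Then $u_2$ has a neighbour $z\in N(U_2)\cap N(C)$, and by hypothesis $wz\notin E(G)$. Also $wu_2\notin E(G)$ since $u_2\in U_2$ and $w\notin N(U_2)$, and $y'\notin V(K)$ so $y'$ is nonadjacent to $u_2$ (different components) and, after possibly adjusting, nonadjacent to $z$ as well.

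Next I would build a long induced path threading $w$, $y'$, possibly a further neighbour of $y'$, and then run around $C$, together with the path $u_1, u_2, z$ on the other side. The key geometric input is Claim~\ref{co}: the neighbourhood of $y'$ on $C$ lies in two sets $T'_b\cup S_b$ and $T'_{b+2}\cup S_{b+2}$ for some $b$, so $y'$ reaches $C$ (either directly through $S$ or via a neighbour in $T'\cup S$) near positions $b+2$ and $b+4$; similarly $w$ reaches $C$ near positions $a+2,a+4$, and $z$ reaches $C$ at a single point. The indices $a$, $b$ are constrained by Claims~\ref{lem:edgesinN(C)_A} and~\ref{corollo} (for the edge-structure in $N(C)$) and by the fact that $y'\notin W$-type arguments may not apply directly to $y'$, so I would instead rely only on Claim~\ref{co} for $y'$ and on the structure of $K$ (via Claims~\ref{co} and~\ref{corooo}) for $u_1,u_2$. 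Walking along $C$ from a landing point of the $z$-side to a landing point of the $w$-$y'$ side, avoiding the two arcs covered by $N(y')$ and $N(w)$, produces an induced cycle of odd length at most $t-4$, or an induced $P_t$, contradicting $G\in\mathcal G^*$.

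The main obstacle I anticipate is bookkeeping the modular arithmetic of the indices on $C$ to guarantee the constructed path or cycle is genuinely induced and of the forbidden length: one must verify there are no unwanted chords among $\{w,y',z,u_1,u_2\}$ and the cycle vertices, and that the arc of $C$ used has the correct parity. This is where Claims~\ref{lem:edgesinN(C)_A}, \ref{corollo}, \ref{co} and the triangle-freeness are all invoked simultaneously, and where a careless choice of which arc of $C$ to traverse can give an even cycle (harmless) instead of an odd one. I would handle this by splitting into the few cases for the relative position of the $w$-landing indices versus the $z$-landing index (essentially whether $z$'s foot on $C$ lies in the short arc or the long arc between the feet of $w$ and $y'$), and in each case exhibit the explicit induced subgraph; the length will come out to $t-2$, $t$, or something $\le t-4$ and odd, each of which is excluded for $G\in\mathcal G^*$. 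Everything else — choosing the vertices $u_1,u_2,z$ and checking the trivial non-adjacencies forced by being in distinct components or distinct bipartition classes — is routine.
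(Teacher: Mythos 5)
Your overall strategy---contradiction, assembling an induced subgraph from $w$, the outside vertex $y'$, an edge $u_1u_2$ of $K$, a vertex $z\in N(U_2)\cap N(C)$, and an arc of $C$---is the right one and is what the paper does. But your plan is considerably more complicated than it needs to be, because the paper never tries to reconnect $y'$ to $C$: since $y'\in Y$ it has no neighbours on $C$, so it can simply sit as a pendant endpoint of a path. Fixing $i$ so that $w\in T'_i\cup S_i$ and combining Claim~\ref{lem:edgesinY_A} with $u_1,u_2\notin W$ forces $z\in T'_{i+1}\cup S_{i+1}$ or $z\in T'_{i-1}\cup S_{i-1}$, and then $y',w,u_1,u_2,z,c_{i+5},\ldots,c_{i-1}$ (or its mirror) is an induced $P_t$. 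That is the entire argument; there is one case up to symmetry, no arc-position bookkeeping, and no cycle construction.

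The genuine gap in your plan is the assertion that ``the length will come out to $t-2$, $t$, or something $\le t-4$ and odd, each of which is excluded for $G\in\mathcal G^*$.'' Lengths $t-2$ are \emph{not} excluded: $G$ is only $(P_t,\Codd)$-free, so induced $P_{t-2}$'s and induced $C_{t-2}$'s are perfectly allowed---indeed, in this subsection $G$ contains an induced $C_t$ and hence many induced $P_{t-2}$'s. If one of your arc-position cases yields a structure on $t-2$ vertices, you have no contradiction, and the approach of closing a cycle through a second neighbour of $y'$ does not obviously avoid this outcome. Separately, ``after possibly adjusting, nonadjacent to $z$ as well'' is an unjustified handwave; the clean fix is to note that $y'z\in E(G)$ would make $y',w,u_1,u_2,z$ an induced $C_5$ (the chords $wu_2$, $u_1z$ are ruled out by triangle-freeness, $wz$ by hypothesis, and $y'u_1$, $y'u_2$ because $y'$ lies in a different component of $G[Y]$ than $u_1,u_2$), which is in $\Codd$ since $t\ge 9$, so no adjustment is ever needed.
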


\begin{proof}
Otherwise, $w$ has neighbours $y\in U_1$, $y''\in Y\setminus  V(K)$ and a non-neighbour $t \in N(U_2)\cap N(C)$, and there is a vertex $y'\in N(y)\cap U_2$ such that $y't\in E(G)$. By Claim~\ref{lem:edgesinY_A}, if $y\in N(T'_i\cup S_i)$, then
$N(y')\setminus Y\subseteq T'_{i-1}\cup S_{i-1}\cup T'_{i+1}\cup S_{i+1}$.
Assume $t\in T'_{i+1}\cup S_{i+1}$ (the other case is symmetric). Then
$y'', r, y, y'$, $t, c_{i+5}, c_{i+6}, \ldots, c_{i-2}, c_{i-1}$
 is an induced path  of length $t$, a contradiction.
\end{proof}

We are now ready for the final result of this subsection, which resumes all important properties we will need later on.

\begin{lemma}\label{propoX}
There is a set $X\subseteq N(C)$ with $|X|\leq 3t$ such that  for every component~$K$ of $G[Y]$ at least one of the following holds:
\begin{enumerate}[(I)]
\item $V(K)\subseteq N(X)$;\label{good}
\item $K$ is trivial and there is a vertex $c$ on $C$ such that $N(K)\subseteq N(c)$; or\label{trivi}
\item $K$ is bipartite with partition classes $U_1, U_2$, and 
 for each $j=1,2$, and  each $w\in N(U_j)\cap N(C)$ having no neighbours in $N(U_{3-j})\cap N(C)$, we have
\begin{enumerate}[(A)]
\item $w\notin X$; and
\item each $z\in N(w)\setminus U_j$ is adjacent to some neighbour of $N(U_{3-j})$ on $C$.\label{adjac}
\end{enumerate}
Moreover, there are consecutive $c, c', c''\in V(C)$ on $C$ such that 
one of the following holds:
 \setcounter{enumi}{2}
 \begin{enumerate}
 \item[(C)] 
 $N(U_1)\subseteq U_2\cup N(c)$ and $N(U_2)\subseteq U_1\cup N(c')$; or\label{reduciXX}
\item[(D)]  $N(U_1)\subseteq U_2\cup N(c)\cup N(c'')$ and $N(U_2)\subseteq U_1\cup N(c')$, and furthermore, $$N(U_1)\cap X\cap N(c)\neq\emptyset\neq N(U_1)\cap X\cap N(c'').$$\label{list2XX}
\end{enumerate}
\end{enumerate}
\end{lemma}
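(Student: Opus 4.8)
The statement is a bookkeeping lemma: it packages together all the structural claims proved earlier in this subsection into a single set $X$ and a clean trichotomy for the components of $G[Y]$. The plan is to build $X$ as the union of the two bounded sets produced earlier, namely the set $B$ from Claim~\ref{lem:colourW} (which dominates $W$ and has $|B|\le t$) and the set $M$ from Claim~\ref{coroM} (which has $|M|\le 2t$ and meets every nonempty $T'_i$ and dominates $N(T'_i)\cap Y$ whenever $T'_{i-2}\cup T'_{i+2}\ne\emptyset$). Then $|X|\le |B|+|M|\le 3t$, as required. With $X:=B\cup M$ fixed, I would then go component by component through $G[Y]$ and verify that one of (I), (II), (III) holds.

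\textbf{Case analysis on components of $G[Y]$.} First, recall from the remark after Definition~\ref{defiW} (a consequence of Claim~\ref{Ybipsamenbhds}) that each component $K$ is either entirely contained in $W$ or disjoint from $W$. If $V(K)\subseteq W$, then since $B\subseteq X$ dominates $W$ (Claim~\ref{lem:colourW}), we get $V(K)\subseteq N(B)\subseteq N(X)$, which is alternative~(I). So assume $V(K)\cap W=\emptyset$, i.e.~$K$ is a component of $G[Y\setminus W]$. If $K$ is trivial, $K=\{y\}$, then by Claim~\ref{co}(b) some $c_{i+2}$ (or $c_{i+4}$) is complete to all of $N(y)$ unless $y$ has neighbours in both $T'_i$ and $T'_{i+2}$; in the former subcase we are in alternative~(II), and in the latter subcase $T'_i\cup T'_{i+2}\neq\emptyset$ forces, via Claim~\ref{coroM}(i) applied to the indices $i$ and $i+2$, that $y=N(y)\cap Y^{c}$... more carefully: $y\in N(T'_i)\cap Y$ with $T'_{i-2}\cup T'_{i+2}\ni T'_{i+2}\ne\emptyset$, so $y\in N(M)\subseteq N(X)$, and likewise we are in alternative~(I). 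If $K$ is nontrivial, then by Claim~\ref{Ybipsamenbhds} it is bipartite with classes $U_1,U_2$ and the vertices of each $U_j$ share a common neighbourhood in $N(C)$; I would aim to show alternative~(III) holds.

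\textbf{Establishing (III) for a nontrivial $K$ disjoint from $W$.} Fix an edge $yy'\in E(K)$ with $y\in U_1$, $y'\in U_2$; then $N(U_1)\setminus Y=N(y)\setminus Y$ and $N(U_2)\setminus Y=N(y')\setminus Y$ (and similarly with roles swapped), so statements about $y,y'$ transfer to statements about $U_1,U_2$. For property~(A): if $w\in N(U_1)\cap N(C)$ has no neighbour in $N(U_2)\cap N(C)$, I claim $w\notin X=B\cup M$; since $w$ dominates a vertex of $Y\setminus W$ via the edge $wy$ but — by Claim~\ref{nononono} — all its $Y$-neighbours lie in $V(K)$, while $M$ was chosen to meet the $T'_i$ which, by Claim~\ref{lem:edgesinY_A} combined with the hypothesis that $w$ misses $N(U_2)\cap N(C)$, cannot host $w$ in a way consistent with membership in the minimal dominating set... here I would spell out the contradiction: if $w\in M\cap T'_i$ then $T'_i\neq\emptyset$, and one checks using Claims~\ref{lem:edgesinY_A} and~\ref{nononono} that $N(U_2)\cap N(C)$ must meet a set adjacent to $w$, contradicting the hypothesis; and $w\in B$ is excluded because $B$ only dominates $W$-vertices while here we would need $w$ to dominate a $W$-vertex, impossible as $V(K)\cap W=\emptyset$. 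Property~(B) is exactly Claim~\ref{corooo}\eqref{itemone} (with $z\in N(w)\setminus U_j$ playing the role of the vertex in $N(N(y')\setminus Y)\setminus Y$), after observing the common-neighbourhood structure lets us identify $w$ with some vertex of $N(y')\setminus Y$. Finally, for the ``moreover'' dichotomy between (C) and (D): if $y,y'\notin\bigcup_i(N(T'_i)\cap N(T'_{i+2}))$, then Claim~\ref{corooo}\eqref{itemtwo} gives consecutive $c,c'$ with $N(y)\setminus Y\subseteq N(c)$, $N(y')\setminus Y\subseteq N(c')$, which (together with bipartiteness, $N(U_j)\subseteq U_{3-j}\cup(N(U_j)\setminus Y)$) is exactly (C); otherwise $y$ (or $y'$) has neighbours in some $T'_i$ and $T'_{i+2}$, Claim~\ref{corooo}(c) gives consecutive $c,c',c''$ with $N(y)\setminus Y\subseteq N(c)\cup N(c'')$ and $N(y')\setminus Y\subseteq N(c')$, and the extra condition $N(U_1)\cap X\cap N(c)\neq\emptyset\neq N(U_1)\cap X\cap N(c'')$ follows from Claim~\ref{coroM}: the relevant $T'$-sets are nonempty, have their index-neighbours nonempty (that is the very condition $N(T'_i)\cap N(T'_{i+2})\ne\emptyset$), and $M\subseteq X$ meets each nonempty $T'_i$; one then checks the chosen representatives lie in $N(c)$ resp.~$N(c'')$.

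\textbf{Main obstacle.} The routine part is assembling $X$ and checking the size bound; the delicate part is property~(A), i.e.~proving that a vertex $w\in N(U_j)\cap N(C)$ with no edge to $N(U_{3-j})\cap N(C)$ was never selected into $X$. This requires carefully re-reading how $B$ and $M$ were chosen — $B$ only ever dominates vertices of $W$, and $M$'s representatives sit in $T'$-sets whose structure (via Claims~\ref{lem:edgesinY_A} and~\ref{nononono}) is incompatible with $w$ missing $N(U_{3-j})\cap N(C)$ — and matching it against Claim~\ref{nononono}, which is precisely the tool that confines such a $w$'s $Y$-neighbours to $V(K)$. I expect this to be where most of the real argument lives; the rest is reindexing and quoting the earlier claims.
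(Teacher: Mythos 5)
Your construction of $X$ is incomplete, and this leaves a genuine gap in your argument for property~(III)(A). You set $X:=B\cup M$ and try to argue directly that no vertex $w\in N(U_j)\cap N(C)$ with no neighbour in $N(U_{3-j})\cap N(C)$ can lie in $X$. Both halves of that argument fail. The assertion that ``$B$ only dominates $W$-vertices'' is not true: the representatives $x_i$ selected in the proof of Claim~\ref{lem:colourW} are chosen in $N_{T'_i\cup S_i}(y)$ for some $y\in W$, but nothing prevents such a vertex from also having neighbours in components of $G[Y\setminus W]$, so $w\in B$ cannot be ruled out this way. For $M$, the representatives are picked arbitrarily from each nonempty $T'_i$, and there is simply no contradiction lurking in Claims~\ref{lem:edgesinY_A} and~\ref{nononono}: an arbitrary $w\in M\cap T'_i$ can perfectly well satisfy $N(w)\cap Y\subseteq U_j\subseteq V(K)$ while having no neighbour in $N(U_{3-j})\cap N(C)$, consistently with Claim~\ref{nononono}. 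So property~(A) is not established for your $X$.

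The paper resolves this by a further augmentation step that you omit. After setting $X':=M\cup B$, the paper examines each $x\in X'$ and each bipartite component $K$ with $N(x)\cap Y\subseteq U_j$ for some $j$; if $U_{3-j}$ has no neighbour in $X'$, an extra vertex of $N(U_{3-j})\setminus Y$ is added. The resulting $X$ satisfies the invariant~\eqref{ijcombi}: whenever $x\in X$ and $N(x)\cap Y\subseteq V(K)$ for some bipartite component $K$, then $V(K)\subseteq N(X)$. This is what makes property~(A) fall out almost for free: if $K$ is a nontrivial component disjoint from $W$ for which~(I) does \emph{not} hold, and $w\in N(U_j)\cap N(C)$ has no neighbour in $N(U_{3-j})\cap N(C)$, then Claim~\ref{nononono} gives $N(w)\cap Y\subseteq V(K)$, so if $w\in X$ then~\eqref{ijcombi} would force $V(K)\subseteq N(X)$, i.e.~(I) after all — contradiction. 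Without this augmentation and the resulting invariant, your proof of~(A) does not go through; the rest of your proposal (trivial components, the $W$-dichotomy, the (C)/(D) split using Claim~\ref{corooo} and Claims~\ref{COMP},~\ref{coroM}) tracks the paper's argument correctly.
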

\begin{proof}
We let $X':=M\cup B$, where  $M$ is the set from Claim~\ref{coroM} and $B$ is the set from Claim~\ref{lem:colourW}. 
Now, for each  $x\in X'$
we check whether there is a bipartite component $K$  of $G[Y]$, with partition classes $U_1, U_2$, such that $N(x)\cap Y\subseteq U_j$, for some $j\in\{1,2\}$. For any such $x$ and $K$, we check whether $U_{3-j}$ has a neighbour in $X'$. If this is not the case, we choose an arbitrary vertex from $N(U_{3-j})\setminus Y$.
 Add all these vertices to~$X'$, which gives us the set $X$. Note that  for all $x\in X$ and any  bipartite component $K$ of $G[Y]$, 
\begin{equation} 
 \label{ijcombi}
\text{if $N(x)\cap Y\subseteq V(K)$ then $V(K)\subseteq N(X)$.}
\end{equation}
Indeed, to see this note that if one of the newly added vertices fits the role of $x$ in the condition of (\ref{ijcombi}), then the corresponding component $K$ has been used for defining $X$ (coming from the other side). 
Furthermore, the size of $X$ is bounded as desired, and by Claim~\ref{coroM}~(i) and Claim~\ref{lem:colourW}, 
\begin{equation}\label{XXX} 
N(X)\supseteq W\cup \big(Y\cap \bigcup_{i\in [t-1]} N(T'_i)\cap N(T'_{i+2})\big).
\end{equation}

Now, consider a trivial component $K=\{y\}$ of $G[Y]$. By~\eqref{XXX}, if there is an $i\in [t-1]$ such that $y\in N(T'_i)\cap N(T'_{i+2})$, then  $y\in N(X)$ and hence~\eqref{good} holds for $K$. Otherwise, by  Claim~\ref{co}~\eqref{coroindexi} and~\eqref{coroTheNeighbour}, we know that~\eqref{trivi} holds for $K$. 

Let us now turn to the non-trivial components of $G[Y]$. 
By Claim~\ref{Ybipsamenbhds}, any such component $K$ is bipartite with partition classes $U_1, U_2$, and the vertices from each $U_i$ have identical neighbourhoods in $N(C)$. In particular,  $K$ either is contained in $G[W]$ or does not meet $W$ at all. For the former type of components $K$, \eqref{good} holds because of~\eqref{XXX}, so let us assume that $V(K)\cap W=\emptyset$. Then  by Claim~\ref{nononono}, by Claim~\ref{corooo}~\eqref{itemone},  and by~\eqref{ijcombi}, (A) and (B) hold.

If there are no $j\in\{1,2\}$ and $i\in[t-1]$ such that $U_j\subseteq N(T'_i)\cap N(T'_{i+2})$, then by Claim~\ref{corooo}~\eqref{itemtwo}, we have~(C). 
So assume there are $j,i$ such that $U_j\subseteq N(T'_i)\cap N(T'_{i+2})$. Then 
by Claim~\ref{corooo}~(c) there are three vertices $c, c', c''$ as desired, and
by Claim~\ref{COMP} and since $M$ meets each non-empty~$T'_i$ (this is guaranteed by Claim~\ref{coroM}~\eqref{allT'i}), it follows that (D) holds for $K$.
\end{proof}

\subsection{The proof of Lemma~\ref{lem:magic} for $G\in\mathcal G^*$}\label{sec:magic}
This section is devoted to the proof of Lemma~\ref{lem:magic} for all  $G\in\mathcal G^*$. We will use  Lemmas~\ref{propdef:c2d+1} and~\ref{propoX}. \\

\indent {\bf Case 1:}
 $G$ is $C_{t}$-free.\\
In this case, $G$ has an induced cycle $C$ of length $t-2$. We apply Lemma~\ref{propdef:c2d+1} to obtain a set $X$. We let~$\mathcal L$ be the set of all feasible palettes obtained by first precolouring $V(C)\cup X$ (in all possible ways) and then updating.
Now, given a palette $L\in\mathcal L$, let $V_3=V_3(G,L)$ be as in the lemma, that is,
$V_3=\{ v\in V(G) : |L(v)|= 3 \}$. Then since $V(C)$ is coloured,  $V_3\subseteq Y$. 

Let $K'$ be a component of $G[V_3]$. Then $K'$ must be a subgraph of a component $K$ of $G[Y]$, as in Lemma~\ref{propdef:c2d+1} (II) or (III). If $K$ is as in (II), then $K'=K=\{y\}$ for some $y\in Y$, and there is a vertex $c\in V(C)$ dominating $y$. Thus, the colour of $c$ is missing on the list of each neighbour of $y$, implying that $y$ is a reducible vertex, which is as desired.

So assume $K$ is as in (III) of Lemma~\ref{propdef:c2d+1}, with bipartition classes $U_1$, $U_2$.  If $K'$ is non-trivial, then, since $L$ is updated, we know that each vertex in $N(K')\cap N(C)$ has a list of size~$2$. More precisely, by (III)(A), for $j=1,2$, there is a colour~$\alpha_j$ missing on the lists of each vertex $v\in N(U_j)\cap N(C)$, and all neighbours of $v$ on $C$ must have colour $\alpha_j$. By property~(III)(B), it is clear that $\alpha_1\neq \alpha_2$. Therefore $K'$ is reducible, which is as desired. 

It remains to treat the case that $K$ is as in  Lemma~\ref{propdef:c2d+1}~(III) and $K'=\{y\}$ is trivial. Assume $y\in U_1$ (the other case is symmetric). Then, since $L$ is updated, and $|L(y)|=3$, the vertices in $U_2$ have lists of size $2$, and there must be a vertex $z\in N(U_2)\cap N(C)$ having list size $1$. Say $z$ is coloured $\alpha_0$. By~(III)(C), vertex $z$ is complete to $N(y)\cap N(C)$, meaning that $\alpha_0$ is missing on the lists of all neighbours of $y$. So $y$ is reducible, as desired. 

\smallskip
{\bf Case 2:}
 $G$ has an induced cycle $C$ of length $C_{t}$.\\
We apply Lemma~\ref{propoX}. Let $\mathcal L$ be the set of all feasible palettes obtained by  precolouring $V(C)\cup X$ in all possible ways and then updating.
Given a palette $L\in\mathcal L$, we consider
$V_3=\{ v\in V(G) : |L(v)|= 3 \}$. Since $V(C)$ is coloured, $V_3\subseteq Y$. 
Consider any component  $K'$ of $G[V_3]$. Then $K'$ must be a subgraph of a component $K$ of $G[Y]$ as in~\eqref{trivi}  or~\eqref{reduciXX} of Lemma~\ref{propoX}. 
If $K$ is as in~\eqref{trivi}, then there is a vertex $c$ on $C$ such that $N(K)\subseteq N(c)$. So, since $L$ is an updated palette, the colour of~$c$ is missing in the list of every neighbour of~$K$. Hence $K$ is reducible, which is as desired.

So we can assume  $K$ is as in~\eqref{reduciXX},  with partition classes $U_1, U_2$.
Observe that  vertices from $Y$ can only have lost colours from their list by updating. So, by Claim~\ref{Ybipsamenbhds}, if $K'$ is non trivial, then $K=K'$. Moreover,  if $K'=\{y\}$ is trivial, then $y\in U_j$ for some $j\in\{1,2\}$, and all vertices in $U_{3-j}$ have list size 2 (they cannot have list size 1, since $|L(y)|=3$).

If $K$ satifies~\eqref{reduciXX}(C),
we
consider the  vertices $c, c'$. Being adjacent, they must have been assigned distinct colours $\alpha_1, \alpha_2$. Since the palette $L$ is updated, $\alpha_1$ is missing in the lists of all neighbours of $U_1$ in $N(C)$, and $\alpha_2$ is missing in the lists of all neighbours of $U_2$ in $N(C)$. 
 Therefore, if $K'$ is non-trivial, then, since the list of each vertex in $K'=K$ contains all three colours, $K'$ is  reducible, which is as desired. 
 
 So, we can assume $K'=\{y\}$ is trivial. Assume $y\in U_1$ (the other case is symmetric). Note that 
 \begin{equation}\label{list2}
 \text{every neighbour of $N(U_1)$ on $C$ is coloured $\alpha_1$.}
 \end{equation}
Let $\alpha_0$ be the colour missing in the lists of the vertices from $U_{2}$. If $\alpha_0=\alpha_1$, then  $y$ is reducible, which is  as desired. We claim that this is the case.
  So assume otherwise, that is, assume
 $\alpha_0\neq \alpha_1$. There must be a vertex $w$ in $N(U_{2})\cap N(C)$ coloured~$\alpha_0$. Note that $w$ is not adjacent to any vertex $v\in N(y)\cap N(C)$, since $v$ then would miss two colours on its list, which is impossible (as $|L(y)|=3$). 
 Now,~\eqref{list2XX}(A) implies that $w\notin X$, and therefore,  $w$ has a neighbour $z$ that is coloured~$\alpha_1$ (as there is no other possible reason for $w$ to be coloured $\alpha_0$).
 But then, by~\eqref{list2XX}(B), and by~\eqref{list2}, $z$ is adjacent to a vertex coloured $\alpha_{1}$. Since $L$ is feasible and updated, we arrive at a contradiction.

If $K$ satisfies~\eqref{list2XX}(D), we
consider the  vertices $c, c', c''$. Belonging to $V(C)$, they must have been coloured in $L$, say they were assigned colours $\alpha_1, \alpha_2, \alpha_3$. Because of the adjacencies of $c, c', c''$, we have  $\alpha_1\neq \alpha_2\neq \alpha_3$. If $\alpha_1=\alpha_3$, then we can argue exactly as in the previous case that $K'$ is a bipartite reducible subgraph of $G$, or that $K'=G[y]$ is trivial and $y$ reducible, which is as desired. So, we assume  $\alpha_1\neq \alpha_3$.

Let $x_1\in N(U_1)\cap X\cap N(c)$, and let $x_2\in N(U_1)\cap X\cap N(c'')$.  If $x_1$ and $x_2$ have been assigned different colours, then the vertices in $U_1$ have list size 1, and therefore the vertices in $U_2$ have list size at most 2, a contradiction. So we can assume $x_1$ and $x_2$ have been assigned the same colour. Since colour $\alpha_1$ is missing in the list of $x_1$, colour $\alpha_3$ is missing in the list of $x_2$, and $L$ is feasible, that colour has to be $\alpha_2$. This means $\alpha_2$ is missing in the list of all vertices in $U_1$. Thus $K'=\{y\}$ has to be trivial, with $y\in U_2$. Furthermore, $y$ is reducible (for colour $\alpha_2$), which is as desired.

\section{Adjustments of the proof of Lemma~\ref{lem:magic} for $G\notin \mathcal G^*$}\label{sec:C8}

In this section, we modify the proof of the previous section  to a proof of Lemma~\ref{lem:magic} for all $G$. 

\subsection{The strategy}
No modifications have to be made if $G$ has an induced $C_{t}$, so we only need to focus on the case treated in Subsection~\ref{sec:shortcycle}. Because of the argument there, we can now assume that $t>9$.
We let $C=c_0, c_1, ..., c_{t-3}, c_0$ be an induced cycle in $G$, and let  $D_i$, $T_i$ and $Y$ be the sets from Claim~\ref{basecyc} for~$C$.
As noted in Subsection~\ref{sec:shortcycle}, the only use of the fact that the graph~$G$ was supposed to be $C_8$-free if $t>9$ was in the proof of Claim~\ref{lem:y_i^*}.
It is easy to see that in order to apply to arbitrary~$G$, Claim~\ref{lem:y_i^*} has to be rewritten as follows.

\begin{claim}
\label{newlem:y_i^*}
For each $y\in Y$ there is an $i\in[t-3]$ such that either \begin{enumerate}[(a)] \item $N(y)\setminus Y\subseteq D_{j}\cup  T_{i}\cup T_{i+2}$ for some $j\in \{i-2, i, i+2, i+4\}$; or
\item $N(y)\subseteq D_{i}\cup T_{i}\cup T_{i+2}\cup D_{i+4}$ and $N(y)\cap D_i\neq\emptyset\neq N(y)\cap D_{i+4}$.
\end{enumerate}
\end{claim}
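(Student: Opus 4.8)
The statement is Claim~\ref{newlem:y_i^*}, which is the $C_8$-free hypothesis removed from Claim~\ref{lem:y_i^*}; the proof is essentially the proof of Claim~\ref{lem:y_i^*} with the case analysis tracked more carefully to see that the only configuration not covered by~(a) is the one described in~(b). So I would start by recalling the forbidden structures available: $G$ is $(P_t, C_3)$-free and has no induced odd cycle of length at most $t-4$, where now $t>9$; note in particular that $C_8$ is \emph{not} forbidden anymore, but odd cycles of length $3,5,7$ still are, and $P_t$ still is.

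First I would fix $y\in Y$ and locate its neighbours on $C$ via the sets $D_j$, $T_j$ given by Claim~\ref{lem:structure}. Since $y$ is at distance $2$ from $C$, $N(y)\cap V(C)=\emptyset$, and $N(y)\setminus Y\subseteq D\cup T$. The first step is to show that all neighbours of $y$ in $D$ lie in at most two of the $D_j$'s, and these two indices differ by $4$: indeed, if $y$ had neighbours in $D_j$ and $D_{j'}$, then going from the $D_j$-neighbour along $C$ to the $D_{j'}$-neighbour and back through $y$ gives a cycle whose parity and length depend on $|j-j'|$; choosing the short arc, the only way to avoid an induced odd cycle of length $\le t-4$ (using $t>9$, so $t-4\ge 7$) and to avoid a triangle is $|j-j'|=4$ (and $j,j'$ of the same parity, which $j'=j+4$ gives). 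Similarly, the $T$-neighbours of $y$ must be confined to a window $T_{h}\cup T_{h+2}$ for some $h$: any two $T$-neighbours in $T_h, T_{h'}$ with $|h-h'|\notin\{0,2\}$ (taken cyclically the short way) would, together with a suitable arc of $C$, form a short induced odd cycle or a triangle. Then I would show the $D$- and $T$-windows are compatible — namely if $y$ has a $D_j$-neighbour then its $T$-neighbours lie in $T_{j-4}\cup T_{j-2}\cup T_j\cup T_{j+2}$ (again by the short-arc-plus-$y$ cycle argument).

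Putting these together: if $y$ has $D$-neighbours in only one class $D_j$ (or none), then choosing $i$ so that $T_i\cup T_{i+2}$ contains all $T$-neighbours and $j\in\{i-2,i,i+2,i+4\}$ gives conclusion~(a). The remaining case is that $y$ has $D$-neighbours in two classes $D_i$ and $D_{i+4}$. Here the $T$-neighbours must simultaneously be compatible with both $D_i$ and $D_{i+4}$, and the only window contained in $(T_{i-4}\cup T_{i-2}\cup T_i\cup T_{i+2})\cap(T_i\cup T_{i+2}\cup T_{i+4}\cup T_{i+6})$ of the form $T_h\cup T_{h+2}$ is $T_i\cup T_{i+2}$; and $N(y)\cap Y=\emptyset$ by Claim~\ref{DY...} since $y$ has a neighbour in $D$. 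Hence $N(y)\subseteq D_i\cup T_i\cup T_{i+2}\cup D_{i+4}$ with $N(y)\cap D_i\neq\emptyset\neq N(y)\cap D_{i+4}$, which is exactly~(b). (In the original Claim~\ref{lem:y_i^*}, case~(b) was ruled out for $t>9$ precisely because such a $y$ together with $c_i,\dots,c_{i+4}$ and its two $D$-neighbours forms an induced $C_8$; now that $C_8$ is allowed, this case genuinely survives, which is why the claim must be restated.)

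\textbf{Main obstacle.} The arguments are all of the same flavour — ``short arc of $C$ plus $y$ (and possibly a $D$-vertex) yields a forbidden short odd cycle or a triangle'' — so there is no single hard step, but the bookkeeping is delicate: one must check, for every pair of possible neighbour-locations, the \emph{parity} of the resulting cycle and that its length is genuinely $\le t-4$, using $t>9$ in the tight spots (e.g.\ an arc of length $7$ is fine to forbid only because $7\le t-4$). The part requiring the most care is verifying that the two windows (the $D$-window of width $4$ and the $T$-window of width $2$) interlock correctly so that in case~(b) the $T$-neighbours are pinned to exactly $T_i\cup T_{i+2}$ and no stray $T_{i-2}$- or $T_{i+4}$-neighbour can occur; this is where one genuinely uses that $y$ has neighbours on \emph{both} sides, in $D_i$ and $D_{i+4}$.
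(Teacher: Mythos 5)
Your proof is correct and follows essentially the same approach as the paper: the paper does not give a separate argument for this claim but notes that the proof of Claim~\ref{lem:y_i^*} goes through verbatim once the final appeal to $C_8$-freeness is removed, so that the surviving ``$y$ adjacent to both $D_i$ and $D_{i+4}$'' configuration becomes case~(b) rather than a contradiction. Your write-up makes explicit the same window arguments (a $D$-window of width~$4$, a $T$-window of width~$2$, and their compatibility via $T_{j-4}\cup T_{j-2}\cup T_j\cup T_{j+2}$) that the paper's proof of Claim~\ref{lem:y_i^*} relies on, and closes case~(b) with Claim~\ref{DY...} exactly as the paper does.
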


This leads to the following modification in the statement of Lemma~\ref{propdef:c2d+1}.  Property (II) has to be replaced with the following property, where we let $Y^*$ denote the set of all vertices of $Y$ that are as in  Claim~\ref{newlem:y_i^*}(b) for some $i\in [t-3]$.
\begin{enumerate}[(I')]\setcounter{enumi}{1}
\item $K=\{y\}$ is trivial and one of the following holds:
\begin{enumerate}[(a)]
\item there is a vertex $c\in V(C)$ such that $N(K)\subseteq N(c)$; or
\item $y\in Y^*$.
\end{enumerate}
\end{enumerate}

In order to deal with the vertices in $Y^*$, we will 
 colour them, their neighbours, and some of their second neighbours, by assigning colours synchronously to whole sets, in a similar way as we colour bipartite reducible components. 
For this, consider $(G,L)$,  and an updated feasible subpalette $L'$ of~$L$. If there is $S\subseteq V(G)$ such that $|L'(v)|=1$ for each $v\in S$ and $L(v)=L'(v)$ for each $v\in V(G)\setminus S$, then we say $L'$ is a {\it nice} reduction of $L$.
It easy to see that the following lemma holds.

\begin{lemma}\label{lem:reduction}
Given two palettes $L$ and $L'$ of a graph $G$ such that $L'$ is a nice reduction of $L$, we have that $(G,L)$ is 3-colourable if and only if $(G,L')$ is 3-colourable.
\end{lemma}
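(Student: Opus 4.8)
\textbf{Proof plan for Lemma~\ref{lem:reduction}.}
The statement asserts that replacing $L$ by a nice reduction $L'$ does not change $3$-colourability of the pair. Since $L'$ is a subpalette of $L$, one direction is immediate: any colouring of $(G,L')$ is a fortiori a colouring of $(G,L)$, because a valid assignment picking one colour from each $L'(v)$ also picks one colour from each $L(v)$, and the adjacency constraints are identical. So the plan is to prove the nontrivial direction: if $(G,L)$ is $3$-colourable, then $(G,L')$ is $3$-colourable.

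The key step is to analyse how $L'$ differs from $L$. By the definition of a nice reduction, there is a set $S\subseteq V(G)$ with $|L'(v)|=1$ for all $v\in S$ and $L'(v)=L(v)$ for all $v\notin S$, and moreover $L'$ is obtained from $L$ by updating and is feasible. I would unwind what ``obtained by updating'' buys us here: starting from some intermediate subpalette that fixes the vertices of $S$ to their singleton colours, the update procedure only deletes, from the list of a neighbour $w$ of a coloured vertex $u$, the colour $L'(u)$; and the end result $L'$ coincides with $L$ off $S$. The crucial consequence is that for each $v\in S$, the singleton colour $c(v):=$ the unique element of $L'(v)$ is not merely an arbitrary choice: the fact that updating terminated with a \emph{feasible} palette that is unchanged outside $S$ means that the partial assignment $c\colon S\to\{1,2,3\}$ is itself a proper partial colouring of $G[S]$ (adjacent vertices of $S$ got distinct singletons, else some list would have emptied), and, more importantly, no vertex outside $S$ lost a colour — equivalently, no vertex $w\notin S$ has a neighbour in $S$ whose assigned colour lies in $L(w)$; if it did, that colour would have been deleted and $L'(w)\neq L(w)$.

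With this observation in hand, the argument is short. Take a $3$-colouring $\varphi$ of $(G,L)$, i.e.\ a proper colouring of $G$ with $\varphi(v)\in L(v)$ for all $v$. Define $\varphi'$ by $\varphi'(v):=c(v)$ for $v\in S$ and $\varphi'(v):=\varphi(v)$ for $v\notin S$. I claim $\varphi'$ is a colouring of $(G,L')$. Membership in the lists is clear: for $v\in S$ we have $\varphi'(v)=c(v)\in L'(v)$ by construction, and for $v\notin S$ we have $\varphi'(v)=\varphi(v)\in L(v)=L'(v)$. Properness must be checked on each edge $uw$: if both endpoints lie in $S$, distinctness holds because $c$ is a proper partial colouring of $G[S]$ as noted above; if both lie outside $S$, distinctness is inherited from $\varphi$; and if $u\in S$, $w\notin S$, then $\varphi'(u)=c(u)\notin L(w)$ by the update observation of the previous paragraph, whereas $\varphi'(w)=\varphi(w)\in L(w)$, so $\varphi'(u)\neq\varphi'(w)$. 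Hence $\varphi'$ is a valid colouring of $(G,L')$, completing the proof.

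\textbf{Main obstacle.} The only delicate point is the middle paragraph: making precise the claim that a nice reduction, being an \emph{updated} and \emph{feasible} palette that agrees with $L$ off $S$, forces both that $c$ properly colours $G[S]$ and that no colour $c(u)$ with $u\in S$ survives in the list of a neighbour $w\notin S$. This is essentially the content of what ``updating'' does, but it should be stated cleanly, perhaps by recalling the definition of updating from Section~\ref{prelim} and observing that termination of the update with an unchanged list at $w\notin S$ rules out $w$ having a neighbour in $S$ coloured with an element of $L(w)$. Everything else is bookkeeping.
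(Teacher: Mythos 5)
Your proof is correct. The paper itself offers no proof for this lemma (it is stated as "easy to see"), so there is nothing to compare against, but your argument is clean and complete: the easy direction uses that $L'$ is a subpalette of $L$, and for the substantive direction you replace $\varphi$ by $c$ on $S$ and verify properness on the three kinds of edges, using precisely that $L'$ is both updated (so $c(u)\notin L'(w)=L(w)$ whenever $u\in S$ is adjacent to $w\notin S$) and feasible (so adjacent vertices of $S$ cannot share a singleton colour). One small wording point: the definition does not say $L'$ was produced \emph{by} running the update procedure from some intermediate palette, only that $L'$ is a fixed point of updating; your conclusions are unaffected since the fixed-point property is exactly what you use.
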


\subsection{Preliminaries for generating the set of palettes $\mathcal L$}\label{sec:palettes}

We first need a little more structural analysis. We start with possible edges inside $N(C)$.

\begin{claim}\label{lem:edgesinN(C)_B}
Let $uv\in E(G[N(C)])$ and let $i\in[t-3]$ such that $u\in D_i$.
Then $$v\in D_{i+3}\cup \bigcup_{j\in\{i-3, i-1, i+1\}}\{D_{j}\cup  T_{j}\}.$$\end{claim}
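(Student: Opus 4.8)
The plan is to argue exactly as in the proofs of Claims~\ref{lem:edgesinN(C)_A} and~\ref{DY...}: an edge $uv$ with $u\in D_i$, together with a suitable arc of $C$, produces a short induced odd cycle, which is forbidden since $G$ has no induced odd cycle of length at most $t-4$. So first I would recall that $u$ is adjacent on $C$ only to $c_i$, and that $v\in N(C)$, so $v$ lies in one of the sets $D_j$ or $T_j$ (by Claim~\ref{lem:structure}, since $G$ is $C_t$-free here, $N(C)=D\cup T$). It remains to rule out every value of $j$ except $j\in\{i-3,i-1,i+1,i+3\}$, and additionally to rule out $v\in T_{i+3}$, i.e.\ to show that if $v\in T_j$ then in fact $j\in\{i-3,i-1,i+1\}$ (which matches the asymmetric statement, since $T_j$ for $j=i+3$ would have neighbours $c_{i+3},c_{i+5}$, whereas $T_{i-3}$ has $c_{i-3},c_{i-1}$, etc.).

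The key computation is the following. Suppose $v$ has a neighbour $c_k$ on $C$ with $k\notin\{i-3,i-1,i+1,i+3\}$ (and $k\ne i$, since $v\ne u$-type and the two ends of the edge cannot both be adjacent to $c_i$ by triangle-freeness). Then walk along $C$ from $c_i$ to $c_k$ the short way; together with the path $c_i,u,v,c_k$ this closes a cycle. Choosing the arc of the correct parity, one gets an induced cycle (the only possible chords would be $uc_m$ or $vc_m$ for interior vertices $c_m$, which are excluded since $u$ sees only $c_i$ and we chose $k$ avoiding the few indices where $v$ could see two close-together vertices of $C$) whose length is at most $t-4$ and odd — contradiction. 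The arithmetic to check: the cycle through the arc from $c_i$ to $c_k$ has length $|k-i|+2$ or $(t-2)-|k-i|+2 = t-|k-i|$ (indices mod $t-2$); one of these two is odd, and it is $\le t-4$ precisely when $|k-i|\ge 4$ on the appropriate side, which is exactly the range of $k$ we need to exclude. The boundary indices $i\pm1,i\pm3$ are the ones that would force the induced odd cycle to have length $t-2$ (allowed, it is $C$ itself essentially) or give an even cycle, so they survive; and among $T_{i\pm3}$ only $T_{i-3}$ survives because $T_{i+3}=\{v: vc_j\in E \Leftrightarrow j\in\{i+3,i+5\}\}$ would, via $c_i,u,v,c_{i+5},c_{i+6},\dots$, still yield a short odd cycle on one side, whereas $T_{i-3}$ with neighbours $c_{i-3},c_{i-1}$ closes up to an even cycle on the short side.

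I expect the only real work here is bookkeeping the parities and the modular index arithmetic carefully, exactly as was done (and waved through with ``any case not covered would lead to an induced odd cycle of length $\le t-4$'') in Claims~\ref{lem:edgesinN(C)_A}, \ref{lem:edgesinN(C)_B}'s analogues in Section~\ref{sec:longcycle}, and Claim~\ref{lem:y_i^*}. Since $u\in D_i$ is adjacent to exactly one vertex of $C$, there is actually less casework than in Claim~\ref{lem:edgesinN(C)_A}: the candidate short cycle through $c_i,u,v$ and an arc of $C$ has at most one ``extra'' vertex off $C$ apart from $u$ and $v$ to worry about, and the triangle-freeness of $G$ immediately forbids $v\in D_i\cup T_i$-with-$c_i$ adjacency coinciding. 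So the whole proof collapses to: check that for $v\in D_j$, $j\notin\{i-3,i-1,i+1,i+3\}$ is impossible, and for $v\in T_j$, $j\notin\{i-3,i-1,i+1\}$ is impossible, by exhibiting in each excluded case the forbidden short induced odd cycle; this is routine and I would write it in one compact paragraph mirroring the style already established in the paper.

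\begin{proof}
By Claim~\ref{lem:structure}, $N(C)=D\cup T$, so $v\in D_j\cup T_j$ for some $j\in[t-3]$. Since $u\in D_i$ is adjacent on $C$ only to $c_i$, and $G$ is triangle-free, $v$ is not adjacent to $c_i$; in particular $v\notin D_i\cup T_{i-2}\cup T_i$. For any remaining candidate index, the vertices $c_i,u,v$ together with the shorter of the two arcs of $C$ joining $c_i$ to a neighbour of $v$ on $C$ form an induced cycle (the path $c_i,u,v$ has no chords to the interior of the arc, as $u$ sees only $c_i$ on $C$ and, for the listed excluded values of $j$, $v$ sees no interior vertex of the chosen arc). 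A direct check of parities shows that this cycle is odd and has length at most $t-4$ unless $v\in D_{i-3}\cup D_{i-1}\cup D_{i+1}\cup D_{i+3}$ or $v\in T_{i-3}\cup T_{i-1}\cup T_{i+1}$, which is exactly the asserted conclusion. Such a short induced odd cycle is forbidden in $G$, so the claim follows.
\end{proof}
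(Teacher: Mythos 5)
Your proposal matches the paper's proof in spirit exactly: the claim is established by exhibiting, for every disallowed position of $v$, a short induced odd cycle on $V(C)\cup\{u,v\}$, which is forbidden. The paper's own proof is just the one sentence ``Vertex $v$ being in any other set would lead to an induced odd cycle of length $\le t-4$ on vertices from $V(C)\cup\{u,v\}$,'' and your discussion fills in the parity bookkeeping correctly (indeed, for $v\in D_j$ one arc gives an odd cycle of length $3+a$ and the other $t+1-a$; for $v\in T_j$ the two usable arcs give lengths summing to $t+2$, so again one odd, one even, and requiring the odd one to have length $\ge t-2$ pins down the allowed indices).

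One small slip in the compact proof paragraph: you write ``together with the \emph{shorter} of the two arcs,'' but the shorter arc is not always the one that closes up to an odd cycle. For example, take $v\in D_{i+5}$ with $t=13$: the shorter arc ($5$ edges) gives a cycle of even length $8$, and it is the longer arc ($6$ edges) that produces the forbidden odd $C_9=C_{t-4}$. Your earlier, more detailed discussion has this right (``choosing the arc of the correct parity''); the final paragraph should say ``the arc of the appropriate parity'' rather than ``the shorter of the two arcs.'' With that correction, the argument is exactly the paper's.
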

\begin{proof}
Vertex $v$ being in any other set would lead to an  induced odd cycle of length $\le t-4$ on vertices from $V(C)\cup \{ u,v\}$. But such a cycle is forbidden in $G$. 
\end{proof}

%

We now explore the first and second neighbourhoods of vertices from $Y^*$. For each $i\in[t-3]$, let  $Y^*_i\subseteq Y^*$ denote the set of all vertices from $Y$ that are adjacent to both $D_i$ and $D_{i+4}$. 

\begin{claim}\label{lem:terrible}
Let $i\in [t-3]$, and let $y\in Y^*_i$. Then, setting $D^*_{i}:=N(y)\cap  D_{i}$ and $D^*_{i+4}:=N(y)\cap  D_{i+4}$,
\begin{enumerate}[(a)]
\item \label{Di-2}
$D_{i-2}=D_{i+2}=D_{i+6}=\emptyset$;
\item \label{b} $N(D^*_i)\subseteq V(C)\cup D_{i+3}\cup T_{i-1}\cup T_{i+1}\cup Y$; and
\item \label{c} $N(D^*_{i+4})\subseteq V(C)\cup D_{i+1}\cup T_{i+1}\cup T_{i+3}\cup Y$.
\end{enumerate}
Moreover, letting $D^{+}_{j}$, for $j\in\{i, i+1, i+3, i+4\}$, denote the set of all vertices from $D_j$ belonging~to components of $G^+_i:=G[D_i\cup D_{i+1}\cup D_{i+3}\cup D_{i+4}]$ that contain a vertex from $D^*_i\cup D^*_{i+4}$, we have
\begin{enumerate}[(a)]\setcounter{enumi}{3}
\item $N(D^+_{i+1})\setminus (D^+_{i}\cup D^+_{i+4})\subseteq V(C)\cup  T_{i-2}\cup T_{i}\cup  T_{i+2}\cup Y$; \label{d}
\item $N(D^+_{i+3})\setminus (D^+_{i}\cup D^+_{i+4})\subseteq V(C)\cup   T_{i}\cup  T_{i+2}\cup  T_{i+4}\cup Y$; and\label{e}
\item $N(D^{+}_{i}\cup D^+_{i+4})\setminus (D^+_{i+1}\cup D^+_{i+3})\subseteq V(C)\cup  T_{i-1}\cup T_{i+1}\cup T_{i+3}\cup 
(Y\cap N(D^*_i\cup D^*_{i+4}))$.\label{f}
\end{enumerate}
\end{claim}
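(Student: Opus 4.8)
The plan is to prove every item of Claim~\ref{lem:terrible} by the same mechanism that powers all the earlier structural claims: any adjacency not listed would, together with a suitable subpath of $C$, close up an induced odd cycle of length at most $t-4$, which is forbidden because $G$ is $\Codd$-free, or it would produce an induced $P_t$, which is forbidden because $G$ is $P_t$-free. The key organising observation is that $y\in Y^*_i$ forces, via Claim~\ref{newlem:y_i^*}(b), that $N(y)\subseteq D_i\cup T_i\cup T_{i+2}\cup D_{i+4}$; so $y$ has no neighbours outside a very narrow band of $C$, and this tightly constrains what the vertices in $D^*_i$ and $D^*_{i+4}$ can see, because a neighbour of $D^*_i$ that is ``too far'' along $C$ would let us route a short path back to $y$.

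First I would prove (a). Suppose $D_{i-2}\neq\emptyset$, say $u\in D_{i-2}$; pick $x\in D^*_i$ and $x'\in D^*_{i+4}$ (both nonempty by definition of $Y^*_i$). If $u$ is adjacent to $x$ we get a short even/odd cycle through $c_{i-2},c_{i-1},c_i,x,u$; if not, we can use the chord-free routing $y,x,c_i,c_{i-1},c_{i-2},u$ and then walk around $C$ the long way. In each case one checks (using triangle-freeness and Claim~\ref{lem:edgesinN(C)_B} to rule out chords) that we get an induced cycle of length at most $t-4$ or an induced $P_t$; a contradiction. The same argument applies to $D_{i+2}$ (symmetric to $D_{i-2}$ through the ``inner'' gap between $c_i$ and $c_{i+4}$, except that here a vertex of $D_{i+2}$ adjacent to $x$ and to $x'$ would create a short odd cycle $x,c_i,c_{i+1},c_{i+2},c_{i+3},c_{i+4},x',\dots$ — actually more directly $x$-$c_i$-$c_{i+1}$-$c_{i+2}$-$z$-$x$ where $z\in D_{i+2}$, length $5$) and to $D_{i+6}$ (symmetric to $D_{i-2}$ after shifting by $i+4$). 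Then (b) and (c) are local statements about $N(D^*_i)$ and $N(D^*_{i+4})$: a neighbour of $x\in D^*_i$ lying in some $D_j$ or $T_j$ with $j$ outside the listed index set gives, together with $x,c_i$ and a subpath of $C$, a short induced odd cycle, using Claim~\ref{lem:edgesinN(C)_B} to pin down which $D_j$'s and $T_j$'s are even reachable from $D_i$ at all; the $Y$ and $V(C)$ possibilities are of course always allowed. Here we also use part (a) to delete the now-empty sets $D_{i-2}, D_{i+2}, D_{i+6}$ from the list Claim~\ref{lem:edgesinN(C)_B} would otherwise give, which is exactly why the lists in (b),(c) are as short as they are.

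For (d),(e),(f) I would argue about the components of $G^+_i = G[D_i\cup D_{i+1}\cup D_{i+3}\cup D_{i+4}]$ that reach $D^*_i\cup D^*_{i+4}$. The point of passing to these ``$D^+$'' sets is that a vertex $w\in D^+_{i+1}$ is joined to $D^*_i\cup D^*_{i+4}$ by a path living entirely inside $G^+_i$ — hence inside $D_i\cup D_{i+1}\cup D_{i+3}\cup D_{i+4}$ — and, crucially, every vertex on that path has exactly one neighbour on $C$ (it lies in some $D_k$), so the path is short in a controlled way and can be prepended to a subpath of $C$ and to $y$ without creating chords. Then any neighbour $z$ of $w$ outside $D^+_i\cup D^+_{i+4}$ that is not in the allowed set $V(C)\cup T_{i-2}\cup T_i\cup T_{i+2}\cup Y$ lets us build a forbidden induced odd cycle of length $\le t-4$ or an induced $P_t$ by combining: $z$, the edge $zw$, the $G^+_i$-path from $w$ to some $x\in D^*_i\cup D^*_{i+4}$, possibly the vertex $y$, and a subpath of $C$. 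Items (e) and (f) are the analogous statements for $D^+_{i+3}$ and for $D^+_i\cup D^+_{i+4}$; in (f) the second-neighbour set is allowed to include $Y\cap N(D^*_i\cup D^*_{i+4})$ precisely because such a $Y$-vertex can be a genuine common neighbour without forcing a short cycle. Throughout one repeatedly invokes triangle-freeness to discard chords and Claims~\ref{lem:edgesinN(C)_B} and~\ref{newlem:y_i^*} to know which index shifts are a priori possible.

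The main obstacle, and the part that needs the most care rather than cleverness, is the bookkeeping of indices: one must verify in each of the half-dozen sub-cases that the cycle or path one writes down is genuinely \emph{induced} (no chords among the $C$-vertices used — automatic since $C$ is induced — and no chords between the ``outside'' vertices $z,w,x,y$ and the $C$-segment, which is where triangle-freeness, part~(a), and Claim~\ref{lem:edgesinN(C)_B} all get used), and that its length is either an odd number $\le t-4$ or exactly $t$. The length computations are routine modular arithmetic on $\mathbb Z/(t-2)$ — the $C$-segment one keeps has length $(t-2)$ minus the span of the indices one ``jumps over'' — but because $t>9$ there is enough room that the short segment between the relevant $c_j$'s is long enough to make the complementary path reach length $t-4$ or $t$; this is exactly the place where the hypothesis $t>9$ (equivalently: we are not in the $t=9$ case handled in Section~\ref{sec:stru}) is silently used, just as in the proof of Claim~\ref{lem:y_i^*}. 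I would present one representative sub-case in full and then say that the remaining cases follow by the same routing argument together with symmetry under the index reflections $j\mapsto 2i+4-j$ (swapping the roles of $D_i$ and $D_{i+4}$, hence of $D_{i+1}$ and $D_{i+3}$).
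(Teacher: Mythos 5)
Your high-level plan -- use $y$ as a hub so that ``bad'' neighbours of $D^*_i\cup D^*_{i+4}$ can be rerouted to a forbidden induced odd cycle or $P_t$, and pass to components of $G^+_i$ to control (d)--(f) -- is the same strategy as the paper, and your diagnosis that (f) is the delicate part and that $t>9$ is used there is correct. But the sketch of (b)/(c) has a concrete error. You say a neighbour $x$ of $d^*_i\in D^*_i$ in an unlisted $D_j$ or $T_j$ gives, ``together with $x,c_i$ and a subpath of $C$, a short induced odd cycle,'' with part~(a) having shortened the list from Claim~\ref{lem:edgesinN(C)_B}. Neither half of that is right: Claim~\ref{lem:edgesinN(C)_B} applied to $D_i$ yields $D_{i+3}\cup D_{i-3}\cup T_{i-3}\cup D_{i-1}\cup T_{i-1}\cup D_{i+1}\cup T_{i+1}$, and $D_{i-2},D_{i+2},D_{i+6}$ do not even occur there, so (a) removes nothing. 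And a cycle built from $x$, $d^*_i$, $c_i$ and an arc of $C$ cannot be a forbidden short odd cycle: for $x\in D_{i-3}$ the short arc gives a $C_6$ and the long arc a $C_{t-2}$, both allowed. To kill $D_{i-3}\cup T_{i-3}$ one must route through $y$ and a vertex $d_{i+4}\in D^*_{i+4}$, obtaining the $C_{t-4}$ on $x,d^*_i,y,d_{i+4},c_{i+4},\ldots,c_{i-3},x$; and to kill $D_{i-1}\cup D_{i+1}$ one needs a separate $P_t$ argument (the path $c_{i-1},c_{i-2},\ldots,c_{i+1},d_{i+1},d^*_i,y$), which the paper isolates as a standalone observation precisely because it is reused in (f). Your sketch gestures at ``routing a short path back to $y$'' in the preamble but does not actually carry it out in (b)/(c), which is where it is essential. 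The parenthetical about a $C_5$ arising from $D_{i+2}$ being adjacent to $D_i$ is also a dead end, since that adjacency is already impossible by Claim~\ref{lem:edgesinN(C)_B}.

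A secondary, stylistic point: (d) and (e) do not need the $G^+_i$-path machinery. Once (a) is in hand, Claim~\ref{lem:edgesinN(C)_B} applied to $D_{i+1}$ gives $N(D_{i+1})\cap N(C)\subseteq D_{i+4}\cup D_{i-2}\cup D_i\cup D_{i+2}\cup T_{i-2}\cup T_i\cup T_{i+2}$, and after deleting the empty sets the only surviving $D$-neighbours lie in $D_i\cup D_{i+4}$ and hence in the same $G^+_i$-component, i.e.\ in $D^+_i\cup D^+_{i+4}$. This is where (a) earns its keep, not in (b)/(c). Reserving the path-through-$G^+_i$ argument for (f) alone keeps the proof from ballooning.
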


\begin{proof}
By Claim~\ref{newlem:y_i^*}, we know that $y$ only has neighbours in $D^*_i\cup T_{i}\cup  T_{i+2}\cup D^*_{i+4}$. This fact will be implicitly used below for seeing that some of the paths we present are induced.

In order to see~\eqref{Di-2}, note that
any vertex $d_{i-2}\in D_{i-2}$ would lead to the induced path $y, d^*_i, c_{i}, c_{i+1}$, $c_{i+2}, ..., c_{i-2}, d_{i-2}$, where $d^*_i\in D^*_i$. This path has length $t$, which is impossible. Using symmetric arguments, we see  that $D_{i+2}$ and $D_{i+6}$ are empty, too. This proves~\eqref{Di-2}.

Furthermore, note that
\begin{equation}\label{didi+1}
\text{there are no edges between $D^*_i$ and $D_{i-1}\cup D_{i+1}$.}
\end{equation}
Indeed, any such edge, say $d^*_id_{i+1}$, with $d^*_i\in D^*_i$ and $d_{i+1}\in D_{i+1}$, gives rise to the path $c_{i-1}, c_{i-2}, ...$, $c_{i+2}, c_{i+1}, d_{i+1}, d^*_{i}, y$. This path is induced and has length $t$, which is forbidden. 
In the same way, we can show that
\begin{equation}\label{didi4+1}
\text{there are no edges between $D^*_{i+4}$ and $D_{i+3}\cup D_{i+5}$.}
\end{equation}

Next, in order to see (\ref{b}),  consider an edge from $d^*_i\in D^*_i$ to $x\in N(C)\setminus (D_{i+3}\cup T_{i-1}\cup T_{i+1})$. By Claim~\ref{lem:edgesinN(C)_B} and because of~\eqref{didi+1}, we have $x\in D_{i-3}\cup T_{i-3}$ and $xd_{i+4}\notin E(G)$. So the cycle $x, d^*_i, y, d_{i+4}, c_{i+4}, c_{i+5}, ..., c_{i-3}, x$ is induced and of length $t-4$. 
This proves~(\ref{b}), and a symmetric argument shows~(\ref{c}).
Also, 
observe that (\ref{d}) and (\ref{e}) follow directly from  Claim~\ref{lem:edgesinN(C)_B} and from~\eqref{Di-2}. 

So it only remains to prove (\ref{f}). For this, consider a possible edge from a vertex $d^+_i\in D^{+}_i\cup D^+_{i+4}$ to a vertex  $x\notin V(C)\cup D^+_{i+1}\cup D^+_{i+3}\cup T_{i-1}\cup T_{i+1}\cup T_{i+3}\cup  
(Y\cap N(D^*_i\cup D^*_{i+4}))$. Take a  shortest path $P=d^+_i, w_1, w_2, ..., w_m$  from $d^+_i$ to $D^*_i\cup D^*_{i+4}$ in $G^+_i$. We may assume that the only neighbour of $x$ in $V(P)\cap (D^+_i\cup D^{+}_{i+4})$ is~$d^+_i$ (after possibly replacing the vertex playing the role of $d^+_i$). So, by Claim~\ref{lem:edgesinN(C)_B},  and by Claim~\ref{newlem:y_i^*}, and since we assume that $t>9$,
\begin{equation}\label{nonbsonpath}
\text{$x$ has no neighbours in $V(P)\setminus\{d^+_i\}$,}
\end{equation}
Also, observe that  because of~\eqref{b} and~\eqref{c}, we know that $x\notin N(D^*_i\cup D^*_{i+4})$,  and thus, $d^+_i\notin D^*_i\cup D^*_{i+4}$. So, $w_1\in D^+_{i+1}\cup D^+_{i+3}$ and $m\ge 2$.

Because of symmetry, we may assume that $d^+_i\in D^{+}_i$.
By Claim~\ref{lem:edgesinN(C)_B}, 
$$x\in D_{i-3}\cup T_{i-3}\cup D_{i-1}\cup Y\setminus N(D^*_i\cup D^*_{i+4}).$$

First we treat the case  $x\in D_{i-3}\cup T_{i-3}$. 
If $w_1\in D^+_{i+3}$, then the induced cycle $x, d^{+}_i, w_1, c_{i+3}$, $c_{i+4}, ..., c_{i-3}, x$ has length $t-4$, which is forbidden. So $w_1\in D^+_{i+1}$. 
If $w_2\in D_{i+4}$, we obtain the forbidden induced cycle $x, d^{+}_i, w_1, w_2, c_{i+4}, ..., c_{i-3}, x$ of length $t-4$. So we can assume $w_2\notin D_{i+4}$, and hence, $w_2\in D_i$. In particular, by~\eqref{didi+1}, $w_2\in D^+_i\setminus D^*_i$, and so, by the choice of $P$, $w_1$ is not adjacent to $D^*_i\cup D^*_{i+4}$.
Consider the path $c_{i+2}, c_{i+1}, w_1, d^+_i, x, c_{i-3}, c_{i-4}, ..., c_{i+4}, d^*_{i+4}, y, d^*_i$, where $d^*_j\in N(y)\cap D^*_j$ for $j=i, i+4$. This path has length $t$, and it is  induced, as we already saw that the edges $d^*_{i+4}w_1$, $d^*_iw_1$ are not present, and by~\eqref{b} and~\eqref{c}, the edges $d^*_ix$, $d^*_{i+4}x$ are not present. We arrived at a contradiction. This proves that $x\notin D_{i-3}\cup T_{i-3}$. 

Now, let us consider the possibility that $x\in D_{i-1}$. First assume $w_m\in D^*_i$. Then by~\eqref{didi+1}, vertex $w_{m-1}$ lies in $D^+_{i+3}$ and no vertex in $\{w_1, ..., w_{m}\}$ is adjacent to $D^*_{i+4}$.  Then, by the choice of $P$, and by~\eqref{nonbsonpath}, we know that
 $x, d^{+}_i, w_1, w_2, ..., w_m, y, d^*_{i+4},  c_{i+4}, c_{i+5},  ...$, $c_{i-1}, x$ (where $d^*_{i+4}\in D^*_{i+4}$)
is an induced cycle of length at least $t$,  a contradiction. 
  So we can assume $w_m\in D^*_{i+4}$, which, by~\eqref{didi4+1}, implies that $w_{m-1}$ lies in $D^+_{i+1}$. 
 Because of the induced cycle $x, d^{+}_i, w_1, w_2, ..., w_m,  c_{i+4}, c_{i+5},  ..., c_{i-1}, x$, which has length $t-4+m$, we are done unless $m<4$. So assume that $m<4$, and note that then $m=2$, by Claim~\ref{lem:edgesinN(C)_B}.
Consider the cycle $x, d^{+}_i, w_1, c_{i+1}, c_{i+2},  ..., c_{i-1}, x$, which is induced and has length $t$, a contradiction. We conclude that $x\notin D_{i-1}$.

It remains to eliminate the case that $x\in Y\setminus N(D^*_i\cup D^*_{i+4})$. In this case, consider the path $x, d^{+}_i, w_1, w_2, ..., w_m, y, d^*_{i+4},  c_{i+4}, c_{i+5},  ..., c_{i-1}$  (where $d^*_{i+4}\in D^*_{i+4}$) if $w_m\in D^*_i$, and the path $x, d^{+}_i$, $w_1, w_2, ..., w_m, c_{i+4}, c_{i+5},  ..., c_{i-1}$ if $w_m\in D^*_{i+4}$ and $m\ge 4$. By~\eqref{nonbsonpath}, and as $x\notin  N(D_i^*\cup D^*_{i+4})$,
both these paths are induced and have length at least $t$, which is forbidden. So we can assume that $w_m\in D^*_{i+4}$ and $m<4$, implying that $m=2$ and $w_1\in D^+_{i+1}$. Consider the induced path
$x, d^{+}_i$, $w_1, c_{i+1}, c_{i+2},  ..., c_{i-1}$ of length $t$, a contradiction. Hence $x\notin Y\setminus  N(D^*_i\cup D^*_{i+4})$, as desired.
\end{proof}

\begin{claim}\label{lem:terribleterribleY}
Let $y\in Y^*_i$,  and let $D^*_i$, $D^*_{i+4}$, $D_{i+3}^+$ and $D_{i+1}^+$ be as in Claim~\ref{lem:terrible}. Let $y'\in Y$.
\begin{enumerate}[(a)]
\item If  $N(y')\cap D^+_{i+1}\neq\emptyset$, then either $N(y')\setminus D_{i+1}^+\subseteq T_{i-1}\cup T_{i+1}$ or $N(y')\setminus D_{i+1}^+\subseteq T_{i+1}\cup T_{i+3}$.
\item If $N(y')\cap D^+_{i+3}\neq\emptyset$, then either $N(y')\setminus D_{i+3}^+\subseteq T_{i-1}\cup T_{i+1}$ or $N(y')\setminus D_{i+3}^+\subseteq T_{i+1}\cup T_{i+3}$.
\end{enumerate}
\end{claim}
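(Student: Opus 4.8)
The two parts are symmetric (interchanging the roles of $D^+_{i+1}$ and $D^+_{i+3}$, together with the indices $i-1\leftrightarrow i+3$), so it suffices to prove (a). The plan is to take a vertex $y'$ with a neighbour $d\in D^+_{i+1}$ and analyse, using Claim~\ref{newlem:y_i^*} applied to $y'$, where the non-$D^+_{i+1}$ neighbours of $y'$ can lie. First I would invoke Claim~\ref{newlem:y_i^*} for $y'$: there is some index $h\in[t-3]$ such that either $N(y')\setminus Y\subseteq D_j\cup T_h\cup T_{h+2}$ for some $j\in\{h-2,h,h+2,h+4\}$, or $y'\in Y^*$ with neighbours in $D_h$ and $D_{h+4}$. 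Since $y'$ has a neighbour $d\in D^+_{i+1}\subseteq D_{i+1}$, the possible values of $h$ are constrained: in the first alternative we must have $i+1\in\{j,h,h+2\}$, and since $D_j$ can only be one of $D_{i+1}$ itself (giving $T_h\cup T_{h+2}\subseteq$ two of $T_{i-1},T_{i+1},T_{i+3},T_{i-3}$ after also using that $T$-neighbours must be consecutive-with-gap-2), this already pins $N(y')\setminus Y$ into $T_{i-1}\cup T_{i+1}$ or $T_{i+1}\cup T_{i+3}$, which is exactly what we want for the $T$-part of the neighbourhood. The remaining work is to rule out the unwanted possibilities: that $y'$ has a neighbour in some $D_k$ with $k\notin\{i+1\}$ outside $D^+_{i+1}$, or that $y'$ lies in $Y^*$ with a second $D$-cluster $D_{h+4}$.

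The key tool for the ruling-out steps will be the structural restrictions from Claim~\ref{lem:terrible}, especially parts~\eqref{Di-2} (namely $D_{i-2}=D_{i+2}=D_{i+6}=\emptyset$) and~\eqref{d} (namely $N(D^+_{i+1})\setminus(D^+_i\cup D^+_{i+4})\subseteq V(C)\cup T_{i-2}\cup T_i\cup T_{i+2}\cup Y$). Since $d\in D^+_{i+1}$, part~\eqref{d} tells me that $d$'s neighbours outside $D^+_i\cup D^+_{i+4}$ — in particular $y'$, which is in $Y$ — behave well; but more usefully, I can combine this with Claim~\ref{lem:edgesinN(C)_B} applied at $d$ to see that any $D$-neighbour of $y'$ lying outside $D^+_{i+1}$ would create a forbidden configuration. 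Concretely, if $y'$ had a neighbour $d'\in D_k\setminus D^+_{i+1}$, I would build an induced path or cycle through $y'$, $d$, $d'$, the cycle $C$, and (when needed) the vertex $y$ with its neighbours $d^*_i\in D^*_i$, $d^*_{i+4}\in D^*_{i+4}$, arriving either at an induced $P_t$ or at an induced odd cycle of length $\le t-4$. Several of the emptiness statements in Claim~\ref{lem:terrible}\eqref{Di-2} and the non-edge facts \eqref{didi+1}, \eqref{didi4+1} established inside its proof will be exactly what makes these paths induced; this is the same style of argument as in the proof of Claim~\ref{lem:terrible} itself.

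I expect the main obstacle to be bookkeeping the case $y'\in Y^*$ — i.e. the possibility that $y'$ itself is adjacent to two $D$-clusters $D_h$ and $D_{h+4}$, one of which is $D_{i+1}$. Here one cannot immediately use ``$T$-neighbours come from two consecutive-gap-2 classes''; instead I would argue that $y'\in Y^*_{i-3}$ or $y'\in Y^*_{i+1}$, apply Claim~\ref{lem:terrible}\eqref{Di-2} to $y'$ as well (forcing further $D$-classes to be empty), and then derive a contradiction with the existence of $y\in Y^*_i$ and the clusters $D^*_i, D^*_{i+4}$ — typically via an induced path that runs $\ldots d^*_i\, y\, d^*_{i+4}\ldots$ on one end and picks up $y'$ and its second $D$-cluster on the other, using~\eqref{b} and~\eqref{c} of Claim~\ref{lem:terrible} to guarantee the absence of chords at the $y$-end. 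Once all $D$-neighbours of $y'$ are confined to $D^+_{i+1}$ and all $T$-neighbours to one of the two claimed pairs, and $Y$-neighbours are allowed freely, the statement follows. The whole argument is a routine, if somewhat lengthy, case analysis entirely parallel to the proof of Claim~\ref{lem:terrible}.
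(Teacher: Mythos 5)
Your proposal has a genuine gap at its very core. You assert that once you see $d\in D^+_{i+1}\subseteq D_{i+1}$ and invoke Claim~\ref{newlem:y_i^*} for $y'$, the possible $T$-classes ``already pin $N(y')\setminus Y$ into $T_{i-1}\cup T_{i+1}$ or $T_{i+1}\cup T_{i+3}$.'' That is not true. Since $d\in D$, the constraint from Claim~\ref{newlem:y_i^*}(a) is $j=i+1$, which allows $h\in\{i-3,\,i-1,\,i+1,\,i+3\}$. The cases $h=i-1$ and $h=i+1$ give the two wanted pairs, but $h=i-3$ gives $T_{i-3}\cup T_{i-1}$ and $h=i+3$ gives $T_{i+3}\cup T_{i+5}$; a neighbour of $y'$ in $T_{i-3}$ or $T_{i+5}$ is \emph{not} automatically excluded, and ruling it out requires its own path/cycle construction (the paper's~\eqref{noNBs2} does exactly this for the symmetric case, eliminating $T_{i+5}\cup D_{i+7}$). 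So the ``pinning'' you rely on does not happen without further work, and that further work is precisely the content of the claim.

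The second, more structural, gap is that every path you sketch runs $y',d,\dots,d^*_i,y,d^*_{i+4},\dots$, implicitly treating $d$ as if it were adjacent (or close) to $D^*_i\cup D^*_{i+4}$. But by the definition of $D^+_{i+1}$, $d$ only needs to lie in the same component of $G^+_i$ as $D^*_i\cup D^*_{i+4}$, so the path from $d$ to $D^*$ inside $G^+_i$ could a priori be long, making your intended inductions/paths non-induced or irrelevant. The paper's proof of this claim opens with exactly the missing ingredient: it takes a shortest $G^+_i$-path $P=d^+, w_1,\dots,w_m$ from $d^+$ to $D^*_i\cup D^*_{i+4}$, uses the existence of $y'$ (with no neighbours on $V(P)\setminus\{d^+\}$) to prove $w_\ell\notin D^+_i\cup D^+_{i+4}$ for all $\ell<m$, and concludes $m=1$. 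Only then can it build the induced paths and cycles that eliminate the unwanted neighbours of $y'$. Without establishing this proximity first, your case analysis cannot get off the ground; and incidentally, the statement excludes $Y$-neighbours of $y'$ as well (this follows from Claim~\ref{DY...}), contrary to your remark that ``$Y$-neighbours are allowed freely.''
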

\begin{proof}
We only prove (b), since (a) is symmetric. Let $d^+_{i+3}\in N(y')\cap D^+_{i+3}$, and let  $P=d^+_{i+3}, w_1$, $w_2, ..., w_m$  be a shortest path from $d^+_{i+3}$ to $D^*_i\cup D^*_{i+4}$ in $G^+_i$ (where $G^+_i$ is as in Claim~\ref{lem:terrible}). Because of Claim~\ref{newlem:y_i^*}, and since we assume $t>9$, we may assume that $y'$ has no neighbours in $V(P)\setminus\{d^+_{i+3}\}$ (after possibly changing the vertex in $D^+_{i+3}$ playing the role of $d^+_{i+3}$). Also, note that the only neighbour of $y$ in $V(P)$ is $w_m$, 
and that
\begin{equation}\label{novertexw}
\text{no vertex in $\{w_1, ..., w_{m-2}\}$ is adjacent to $D^*_i\cup D^*_{i+4}$.}
\end{equation}

 We claim that for all $\ell<m$,
\begin{equation}\label{wisgood}
w_\ell\notin D^+_{i}\cup D^+_{i+4}.
\end{equation}
Indeed, if this is not true, then let $\ell<m$ be the smallest index such that $w_\ell\in D^+_{i}\cup D^+_{i+4}$. Let $d^*_j\in D^*_j$, for $j=i, i+4$. If $w_\ell\in D^+_i$, then consider the path
$y', d^+_{i+3}, w_{1}, w_2, ..., w_\ell, c_{i}, c_{i-1}, ..., c_{i+5}, c_{i+4}$, $d^*_{i+4}, y$, which, by~\eqref{novertexw}, is induced and has length at least $t$. If $w_\ell\in D^+_{i+4}$, then consider the path 
$y', d^+_{i+3}, w_1, w_2, ..., w_\ell, c_{i+4}, c_{i+5}, ..., c_{i}, d^*_i, y$, which has length $t$ and by~\eqref{novertexw}, is induced. As such paths are forbidden, we proved~\eqref{wisgood}.

Note that~\eqref{wisgood} together with Claim~\ref{lem:edgesinN(C)_B} implies that $m=1$ and $$w_1\in D^*_i.$$ 

We will now show that 
\begin{equation}\label{noNBs}
\text{$y'$ has no neighbours in $D_{i-1} \cup (D_{i+3}\setminus D_{i+3}^+)$.}
\end{equation}
For contradiction, suppose $y'$ has a neighbour  $x\in D_{i-1}\cup (D_{i+3}\setminus D_{i+3}^+)$. Then  $x, y', d^+_{i+3}, w_1, y, d^*_{i+4}$, $c_{i+4}, c_{i+5}, ..., c_{i-1}$ is a path, where $d^*_{i+4}\in D^*_{i+4}$. This path has length $t$, and because of Claim~\ref{lem:edgesinN(C)_B} and Claim~\ref{lem:terrible}~\eqref{b}, the only possible edge is $xc_{i-1}$, leading to an induced cycle of the same length, a contradiction. 
This proves~\eqref{noNBs}.

Next, we show that
\begin{equation}\label{noNBs2}
\text{$y'$ has no neighbours in $T_{i+5}\cup D_{i+7}$.}
\end{equation}

In order to see~\eqref{noNBs2}, assume $y'$ has a neighbour
 $x\in T_{i+5}\cup D_{i+7}$. Consider the cycle $x, y', d^+_{i+3}, w_1$, $c_i, c_{i-1}, ..., c_{i+7}, x$. By Claim~\ref{lem:terrible}(\ref{b}), this cycle is induced, and furthermore, it has length $t-4$. This is a contradiction, which proves~\eqref{noNBs2}.

By~\eqref{noNBs} and~\eqref{noNBs2}, and
because of Claims~\ref{DY...} and~\ref{newlem:y_i^*}, it only remains to show that $y'$ cannot have neighbours in both $T_{i-1}$ and $T_{i+3}$. Suppose otherwise, and let $t_{j}\in N(y')\cap T_{j}$, for $j=i-1, i+3$. Then $c_{i-1}, t_{i-1}, y', t_{i+3}, c_{i+3}$, $c_{i+4}, ..., c_{i-1}$ is an induced cycle of length $t-4$, a contradiction.
\end{proof}

\begin{claim}\label{lem:terribleY}
Let $y\in Y^*_i$, $y'\in Y\setminus Y^*_i$ such that $y'$ has a neighbour in $D^*:=N(y)\cap  (D_i\cup D_{i+4})$. Then \begin{enumerate}[(a)]
\item $N(y')\subseteq D_i\cup T_i\cup T_{i+2}\cup D_{i+4}$; and
\item if $d\in D\cap N(y')\setminus N(y)$, then $N(d)\subseteq N(D^*)$.
\end{enumerate}
\end{claim}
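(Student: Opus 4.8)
\textbf{Proof proposal for Claim~\ref{lem:terribleY}.}

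My plan is to argue both parts by exhibiting a long induced path or a short induced odd cycle through $V(C)$, exactly in the style of the preceding claims. Fix $y\in Y^*_i$ and write $d^*_i\in D^*_i$, $d^*_{i+4}\in D^*_{i+4}$ for fixed neighbours of $y$; throughout I will use Claim~\ref{newlem:y_i^*} to control $N(y)$, and Claim~\ref{lem:terrible} (parts (a)--(c)) to control the second neighbourhood of $y$, so that the paths I build are genuinely induced. I will also repeatedly invoke Claim~\ref{DY...} (no vertex of $Y$ has neighbours in both $Y$ and $D$) to simplify the analysis of $N(y')$.

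For part (a): since $y'\in Y$, Claim~\ref{newlem:y_i^*} already tells us $N(y')\setminus Y$ is contained in a set of the form $D_j\cup T_k\cup T_{k+2}$ (or the exceptional type with two $D$-blocks); and since $y'$ has a neighbour in $D^*\subseteq D_i\cup D_{i+4}$ and $y'\notin Y^*_i$, the index~$k$ must be forced so that $N(y')\setminus Y\subseteq D_j\cup T_i\cup T_{i+2}$ for some $j\in\{i-2,i,i+2,i+4\}$. The remaining work is to rule out $j=i-2$ and $j=i+2$: a neighbour $d_{i-2}\in D_{i-2}$ of $y'$ (together with the $y'$--$d^*$ edge and $y$) would give, as in the proof of Claim~\ref{lem:terrible}\eqref{Di-2}, an induced $P_t$ running $d_{i-2}, c_{i-2}, c_{i-1},\dots, c_i, d^*_i, y', \dots$ of length~$t$ (using Claim~\ref{lem:terrible}(b)/(c) to see the path is chordless); the case $j=i+2$ is symmetric via $d^*_{i+4}$. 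This leaves $N(y')\setminus Y\subseteq D_i\cup T_i\cup T_{i+2}\cup D_{i+4}$, and then Claim~\ref{DY...} forces $N(y')\cap Y=\emptyset$ (as $y'$ has a $D$-neighbour), giving $N(y')\subseteq D_i\cup T_i\cup T_{i+2}\cup D_{i+4}$, which is (a).

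For part (b): let $d\in D\cap N(y')\setminus N(y)$; by (a), $d\in D_i\cup D_{i+4}$, say $d\in D_i$ (the other case symmetric). Suppose for contradiction $d$ has a neighbour $x\notin N(D^*)$. The idea is that a shortest path from $d$ into $D^*_i\cup D^*_{i+4}$ inside $G^+_i$ is just the edge $dy'$-path replacement--more precisely I want to show $d\in D^+_i$ (it lies in a component of $G^+_i$ meeting $D^*$, via $d, y', d^*$? no--$y'\notin G^+_i$); instead I will route through $y'$ and $y$: from $x$ take $x, d, y', \dots$ and then reach $C$ through $y$ and $d^*_{i+4}$, i.e. consider $x, d, y', ?$—since by (a) $N(y')\subseteq D_i\cup T_i\cup T_{i+2}\cup D_{i+4}$, $y'$ has a neighbour $d'\in D^*$ by hypothesis; then $x, d, y', d', $ followed by the arc of $C$ from the relevant $c_j$ around to a $c$ near~$D_i$, possibly detouring through $y$ and $d^*_{i+4}$, yields an induced path or cycle of forbidden length once we use Claim~\ref{newlem:y_i^*} for $N(x)$ (since $x\in N(D_i)\subseteq$ a controlled set by Claim~\ref{lem:edgesinN(C)_B}, or $x\in Y$ controlled by Claim~\ref{newlem:y_i^*}, or $x\in V(C)$) and Claim~\ref{lem:terrible}(b) to see $x$ is not adjacent to $d^*_i, d^*_{i+4}$. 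I expect the main obstacle to be exactly this last step: organizing the case split on where $x$ lives ($V(C)$, $T$, $D$, or $Y$) and in each case picking the right arc of $C$ and the right detour through $\{y, d^*_{i+4}\}$ so that the resulting walk is induced and has length $t$ or an odd length $\le t-4$; the bookkeeping is delicate because $x\notin N(D^*)$ must be used precisely to kill the only chords that would otherwise shorten the cycle. Once every case produces a forbidden subgraph, we conclude $N(d)\subseteq N(D^*)$, finishing the claim.
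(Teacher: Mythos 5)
Your proposal has a genuine gap in both parts, and in part (a) it also targets the wrong obstruction.

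For part~(a), you propose to rule out a possible neighbour of $y'$ in $D_{i-2}$ or $D_{i+2}$. But these sets are already empty by Claim~\ref{lem:terrible}\eqref{Di-2} (applied to $y\in Y^*_i$), so there is nothing to do there. What actually needs excluding, once one observes that $y'$ has no $Y$-neighbours (Claim~\ref{DY...}) and no $D_{i+4}$-neighbour (as $y'\notin Y^*_i$), are the possible $T$- and $D$-neighbours dictated by Claim~\ref{newlem:y_i^*} with the $D$-block forced to be $D_i$: namely $D_{i-4}\cup T_{i-4}$ and $T_{i-2}$ (and, symmetrically, $T_{i+4}$). The paper kills $D_{i-4}\cup T_{i-4}$ with an induced cycle $t_{i-4},y',d_i,y,d_{i+4},c_{i+4},\dots,c_{i-4},t_{i-4}$ of length $t-4$, and $T_{i-2}$ with an induced $P_t$; your sketched path ``$d_{i-2},c_{i-2},c_{i-1},\dots,c_i,d^*_i,y',\dots$'' is not a path of length $t$ and does not address the cases that matter.

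For part~(b), your high-level instinct (route a long path from $x$ through $d,y'$ and then through $y,d^*_{i+4}$ back into $C$) is reasonable, but you explicitly stop at ``the main obstacle'' and never resolve it, and the resolution is the heart of the proof. The paper's argument is a sharp two-step case elimination that your sketch does not reach: first, the length-$t$ path $x,d,y',d_i,y,d_{i+4},c_{i+4},\dots,c_{i-1}$ forces a chord $xc_j$; if the only chord is $xc_{i-1}$ one gets an induced $C_t$, so combining with Claim~\ref{lem:edgesinN(C)_B} one pins $x$ down to $D_{i-3}\cup T_{i-3}$. Second, with $x$ so located, the path $c_{i+1},c_{i+2},c_{i+3},c_{i+4},d_{i+4},y,d_i,y',d,x,c_{i-3},\dots,c_{i+6}$ is induced of length $t$, a contradiction. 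Without the first step's localization of $x$, the ``delicate bookkeeping'' you defer cannot be closed, so the proposal as written is incomplete.
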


\begin{proof}
Because of symmetry, we can assume that $y'\in N(D_i)$. Let $d_i\in N(y)\cap N(y')\cap D_i$ and let $d_{i+4}\in N(y)\cap  D_{i+4}$. 
Note that by Claim~\ref{lem:terrible}~\eqref{b}, $y$ has no neighbours in $D_{i-4}\cup T_{i-4}\cup T_{i-2}$. 
Also, note that by Claim~\ref{DY...}, $y'$ has no neighbours in $Y$, and as $y'\notin  Y^*_i$, we know that $y'd_{i+4}\notin E(G)$.

We first show (a).  If $y'$ has a neighbour $t_{i-4}\in D_{i-4}\cup T_{i-4}$, then $t_{i-4}, y', d_i, y, d_{i+4}, c_{i+4}, c_{i+5}, ...$, $c_{i-4}, t_{i-4}$ is an induced cycle of length $t-4$, a contradiction. If $y'$ has a neighbour $t_{i-2}\in T_{i-2}$, then $c_{i+6}, c_{i+7}, ..., c_{i-2}, t_{i-2}, y', d_i, y, d_{i+4}, c_{i+4}, c_{i+3}, c_{i+2}, c_{i+1}$ is an induced path of length $t$, a contradiction. So by Claim~\ref{newlem:y_i^*}, item (a) follows.

Let us now show (b). For this, let $d\in D\cap N(y')\setminus N(y)$ and note that as $y'\in N(D_i)$ and $y'\notin Y^*_i$ we have $d\in D_i$. Assume $xd$ is an edge, with $x\in V(G)\setminus N(D^*)$. Consider the path $x, d, y', d_i, y, d_{i+4}, c_{i+4}, c_{i+5}, ...,c_{i-1}$, which has length $t$, and therefore cannot be induced. The only possible chords are of the form $xc_j$, for $j=i+5, i+6, ..., i-1$. Morover, if the only chord is $xc_{i-1}$, we obtain an induced cycle of length $t$, which is forbidden. Therefore, and because of Claim~\ref{lem:edgesinN(C)_B}, we know that $x\in D_{i-3}\cup T_{i-3}$. Then, consider the path $c_{i+1}, c_{i+2}, c_{i+3}$, $c_{i+4}, d_{i+4}, y, d_i, y', d, x, c_{i-3}, c_{i-4}, ..., c_{i+6}$. This path is induced and has length $t$, a contradiction.
\end{proof}

\subsection{The proof of Lemma~\ref{newlem:magic} for all $G$}\label{sec:newmagic}

Given a $(P_{t}, \Codd)$-free graph $G$, we can assume $G$ is $C_{t}$-free, and so, $G$ has an induced cycle  $C$ of length $t-2$. We find, as before, a set $X$ as in Lemma~\ref{propdef:c2d+1}, but we will not colour all of it yet. 
 Instead, for every colouring of $V(C)$, we generate a palette. We take $\mathcal L'$ as the set of all such palettes that are feasible. 
For each $L'\in\mathcal L'$, we proceed as follows.

For each $i\in[t-3]$, we consider the set $Y^*_i\subseteq Y^*$. 
Set \[D_j^{*(i)}:=\bigcup_{y\in Y^*_i}(N(y)\cap D_j)\text{ for }j=i, i+4.\] 
For $j\in\{i, i+1, i+3, i+4\}$, let 
$D^{+(i)}_{j}$ denote the set of all vertices from $D_j$ belonging to components of $$G^{+(i)}_i:=G[D_i\cup D_{i+1}\cup D_{i+3}\cup D_{i+4}]$$ that contain some vertex from $D^{*(i)}_i\cup D^{*(i)}_{i+4}$.
Finally, we let $Z^{*(i)}$ be the set of all $y\in Y\setminus Y^*_i$ that are adjacent to $V(G^{+(i)}_i)$. By
Claim~\ref{lem:terrible}~\eqref{f}, Claim~\ref{lem:terribleterribleY} and Claim~\ref{lem:terribleY}, we know that $Z^{*(i)}$ partitions into three sets $Z^{*(i)}_1$, $Z^{*(i)}_2$ and $Z^{*(i)}_3$, such that  for $j=1,2,3$, 
\begin{equation}\label{NB2}
N(Z^{*(i)}_j)\setminus D\subseteq T_{i+j-2}\cup T_{i+j}.
\end{equation}
Moreover, 
\begin{equation}\label{NB3}
N(Z^{*(i)}_j)\cap D \subseteq D^{+(i)}_{i+1}\cup D^{+(i)}_{i+3}\text{ for $j=1,3$},
\end{equation}
 and $N(Z^{*(i)}_2)\cap D \subseteq D_i\cup D_{i+4}$. Further, for $j=i, i+4$, setting 
$$D^{++(i)}_{j}:= N(Z^{*(i)}_2)\cap (D_j\setminus D^{+(i)}_{j})$$  we have, by Claim~\ref{lem:terribleY} (b), and by Claim~\ref{lem:terrible}~\eqref{b} and~\eqref{c}, 
\begin{equation}\label{NBD++}
\text{$N(D^{++(i)}_{j})\subseteq Y^*_i\cup Z^{*(i)}_2\cup\{c_j\}\cup T$,}
\end{equation}
with 
\begin{equation}\label{NBD+++}
\text{ $N(D^{++(i)}_{i})\cap T\subseteq T_{i-1}\cup T_{i+1}$ and $N(D^{++(i)}_{i+4})\cap T\subseteq T_{i+1}\cup T_{i+3}$.}
\end{equation}

Set $$F(Y^*_i):=Y^*_i\cup  Z^{*(i)}\bigcup_{j=i, i+1, i+3, i+4}D_j^{+(i)}\cup \bigcup_{j=i, i+4}D_j^{++(i)}.$$
By Claim~\ref{newlem:y_i^*}, Claim~\ref{lem:terrible}, Claim~\ref{lem:terribleterribleY} and Claim~\ref{lem:terribleY} and because of~\eqref{NB2},~\eqref{NB3} and~\eqref{NBD++}, we know  that 
$N(F^*_i)\subseteq V(C)\cup T$, and
moreover, $F(Y^*_i)\cap F(Y^*_j)=\emptyset$ for distinct $i,j\in [t-3]$. So we can treat $Y^*_i$ and $Y^*_j$ independently.

For $i\in [t-3]$, let $\alpha_{i}$ be the colour assigned to $c_i$ in $L'$. Colour, for each $i\in [t-3]$ with $Y^*_i\neq \emptyset$, all vertices from the set $F(Y^*_i)$ as follows:

\begin{itemize}
\item Assign colour $\alpha_{i+1}$ to all vertices in $D_{i}^{+(i)}\cup D_{i}^{++(i)}\cup Z^{*(i)}_1$;
\item   assign colour $\alpha_{i+2}$ to all vertices in $D_{i+1}^{+(i)}\cup D_{i+3}^{+(i)}\cup Y^*_i\cup Z^{*(i)}_2$; and 
\item assign colour $\alpha_{i+3}$ to all vertices in $D_{i+4}^{+(i)}\cup D_{i+4}^{++(i)}\cup Z^{*(i)}_3$.
\end{itemize}
After updating, we call the obtained palette $L''$.
Note that
for $j=1,2,3$
we have that $\alpha_{i+j}\notin L'(t)$ for $t\in T_{i+j-2}\cup T_{i+j}$. Hence, by~\eqref{NB2},~\eqref{NB3},~\eqref{NBD++} and~\eqref{NBD+++}, and by Claims~\ref{lem:terrible},~\ref{lem:terribleterribleY} and~\ref{lem:terribleY},  
this colouring is valid, and moreover, the neighbours of $F(Y^*_i)$ have not lost any colours in their lists, for $i\in [t-3]$. So by Lemma~\ref{lem:reduction}, $(G,L'')$ is $3$-colourable if and only if $(G, L')$ is $3$-colourable, which means that we can work with $L''$ instead of $L'$. For each $L''$ obtained in this way,  we colour $X\setminus V(C)$ in all possible ways, update the palette, and, if feasible, we add it to the set $\mathcal L$.

Now, given a palette $L\in\mathcal L$, let $V_3=V_3(G,L)\subseteq Y$ be as in the lemma and let $K'$ be a component of $G[V_3]$. Then $K'$ must be a subgraph of a component $K$ of $G[Y]$, as in Lemma~\ref{propdef:c2d+1}. Note that  $K$ is not as in (II') with $K=\{y\}$ and $y\in Y^*$, since then $|L(y)|=1$, by our previous arguments.  Hence we can proceed exactly as in the case when $G\in \mathcal G^*$. This finishes the proof.

\section{The overall complexity of the algorithm}\label{sec:time}

In this section we analyse the complexity of the algorithm. Let $m$, $n$ be the number of edges and vertices of $G$. We assume $t\in\mathbb N$, $t\ge 9$ is  odd.

We start by analysing  the easier case that $G\in\mathcal G^*$, and leave the analysis for the case $G\notin\mathcal G^*$ to the end.


{\bf Finding the cycle $C$ and the set $X$.}
It can be checked  in $O(m)$ time whether $G$ is bipartite. If it is not, we will find
an induced cycle $C$ that either has length $t-2$ or $t$. If $C$ has length $t-2$, we will work with $C$ as if the graph $G$ was $C_{t}$-free -- should we find, at any point in our process, an induced cycle $C'$ of length $t$, we abort the process, and start afresh with $C'$. 

We check the vertices in  $N(C)$
and partition them into sets $D_i$, $T_i$, $T'_i$, $S_i$. We check the remaining vertices, and if they have a neighbour in $N(C)$, we put them into  $Y$. By Claim~\ref{lem:structure} we know that any vertex not qualifying for $Y$ is dominated by one of its non-neighbours. So we can put such vertices aside in order to colour them  at the very end. This takes $O(m)$ time. 

Now, if $|V(C)|=t-2$, we find sets $Q$ from Claim~\ref{d=3} and $R$ from Claim~\ref{lem:colourW2d+1}, which together form $X$ (in this case). This can be done in $O(n+m)$ time, as we only need to find inclusion minimal neighbourhoods in disjoint sets $D_i$ or $T_i$ (and then pick a vertex for $Q$ or $R$). 

If $|V(C)|=t$, we need to find sets $M$ from Claim~\ref{coroM} and $B$ from Claim~\ref{lem:colourW}, which also takes $O(n+m)$ time. Together, $M$ and $B$ form the set $X'$, from which we obtain the set $X$ of Lemma~\ref{propoX} by adding some extra vertices. More precisely, we add one new vertex each time we find a 
bipartite component $K$  of $G[Y]$, and a vertex $x\in X'$, such that the only neighbours of $x$ in $Y$ belong to $K$, and, at the same time, one of the bipartition classes of $K$ does not send any edge to $X'$. We can explore in $O(n+m)$ time all components $K$ to check if they are as required, and add the extra vertex to $X'$ if needed. 

The total time for finding $C$,  analysing the structure, and finding $X$ is $O(n+m)$.\smallskip

{\bf Checking all precolourings of $X$.}
We need to consider all distinct feasible 
 colourings of $V(C)\cup X$. This set has at most $4t$ vertices, which form a connected set, so we will need to check $3\cdot 2^{4t-1}$ many colourings. Updating can be done in $O(m)$ time. 
 
 We then go through the components of $G[V_3]$ (where $V_3$ are the vertices having list size $3$). Any trivial reducible component can be coloured by checking which colour is missing at any of its neighbours' list. Bipartite reducible components can be dealt with similarly. This takes $O(n)$ time.
 
Finally,  we need to solve a list-colouring
instance with lists of size at most $2$, which can be done in
$O(n+m)$ time. If a colouring is found, we add back to $G$ the vertices dominated by non-neighbours, suitably coloured. 
In conclusion,  if $G\in\mathcal G^*$, then 
the overall
complexity of the algorithm is $2^{O(t)}\cdot O(n+m)$.

{\bf Variation with extra precolouring in case $G\notin\mathcal G^*$.}
We will know we are in this case if $Y^*$ turns out to be non-empty. We need to determine all sets $Y^*_i$ and the corresponding $F(Y^*_i)$. Note that these sets can be found in $O(m)$ time, before we colour the cycle $C$. We then go through all feasible colourings of $C$. There are $3\cdot2^{t-3}$ colourings we need to check. For each of these we colour all $F(Y^*_i)$, and then update, colour the reducible subgraphs and run the $2$-list colouring instance as before. The overall
complexity of the algorithm stays at $2^{O(t)}\cdot O(n+m)$.


\bibliographystyle{plain}

\end{document}